\newtheorem{rema}{Remark}[section]
\newtheorem{defi}{Definition}[section]
\newtheorem{lemm}{Lemma}[section]
\newtheorem{theo}{Theorem}[section]
\newtheorem{coro}{Corollary}[section]
\newcommand{\bbox}{\normalsize {}%
        \nolinebreak \hfill $\blacksquare$ \medbreak \par}
\newcommand{\R}{\mathbb{R}}
\newcommand{\C}{\mathbb{C}}
\newcommand{\N}{\mathbb{N}}
\newcommand{\U}{\mathbb{U}}
\newcommand{\V}{\mathbb{V}}
\newcommand{\T}{\mathbb{T}}
\newcommand{\F}{\mathbb{F}}
\newcommand{\D}{\mathbb{D}}
\newcommand{\gd}{\overleftrightarrow{\partial}}
\newcommand{\sharpgdt}{\overleftrightarrow{\sharp_t}}
\newcommand{\tx}{\triangleright}
\newcommand{\td}{\triangleleft}
\newcommand{\lc}{\boldsymbol{[}\!\!\boldsymbol{[}}
\newcommand{\rf}{\boldsymbol{]}\!\!\boldsymbol{]}}
\newcommand\lent{\llbracket }               
\newcommand\rent{\rrbracket }               
\newcommand\chapo[1]{\widehat{#1} }         
\newcommand\source{{\phi}^\triangleright}   
\newcommand\but{\phi^\triangleleft}         
\newcommand\refeq[1]{(\ref{#1})}            
\newcommand\intit{>\!\!\!\!\!\!\!\int}      
\newenvironment{proof}{\noindent\emph{Proof} ---}{\bbox\vspace{0,15cm}}
\title{First integrals for non linear hyperbolic equations}
\author{Dikanaina \textsc{Harrivel}\footnote{Institut Camille Jordan, UMR CNRS 5208, Universit{\'e} Claude Bernard Lyon 1, 13 boulevard du 11 novembre 1918, 69622 Villeurbanne cedex, France, \textsf{harrivel@math.univ-lyon1.fr}}, Fr{\'e}d{\'e}ric \textsc{H{\'e}lein}\footnote{Institut de Math{\'e}matiques de Jussieu, UMR CNRS 7586 Universit{\'e} Denis Diderot Paris 7,
Case 7012, 2 place Jussieu
75251 Paris Cedex 5, France, \textsf{helein@math.jussieu.fr}}
}
\begin{document}
\maketitle
\begin{abstract}
Given a solution of a nonlinear wave equation on the flat space-time (with a real analytic nonlinearity), we relate its Cauchy data at two different times by nonlinear representation formulas in terms of asymptotic series. We first show how to construct formally these series by mean of generating functions based on an algebraic framework inspired by the construction of Fock spaces in quantum field theory. Then we build an analytic setting in which all these constructions really make sense and give rise to convergent series. 
\end{abstract}
$\pmb{[}u\pmb{]}$ $\boldsymbol{[}v\boldsymbol{]}$ $[v]$
It is well-known that, for hyperbolic wave equations like, for instance, the linear Klein--Gordon equation on the space-time $\R\times \R^n$:
\begin{equation}\label{kg}
{\partial ^2u\over \partial t^2} - \Delta u + m^2 u = 0,
\end{equation}
one can construct first integrals, i.e. conserved quantities. One of the most important example is the energy $E_t[u]:= {1\over 2}\int_{\R^n}\left( ({\partial u\over \partial t})^2 + |\vec{\nabla} u|^2 + m^2u^2\right)|_{x^0=t}\ d\vec{x}$, which is particularly useful in the analysis of the solutions of (\ref{kg}). (Here we denote by $x = (x^0,\vec{x})\in \R\times \R^n$ a point in space-time, ${\partial u\over\partial t}:= {\partial u\over\partial x^0}$ and $\vec{\nabla}u := \left( {\partial u\over \partial x^1}, \cdots , {\partial u\over \partial x^n}\right)$.) By claiming that the family of functionals $\left(E_t\right)_{t\in \R}$ is a first integral we mean that, for any given solution $u$ of (\ref{kg}), the value of $E_t[u]$ does not depend on $t$. Equation (\ref{kg}) possesses however other conserved quantities such as
\begin{equation}\label{Iphi}
I^\varphi_t[u]:= \int_{\R^n}\left(u{\partial \varphi\over \partial t} - {\partial u\over \partial t}\varphi \right)|_{x^0=t}\ d\vec{x},
\end{equation}
where $\varphi$ is a fixed solution of (\ref{kg}). One interpretation of these functionals is based on Noether's theorem: solutions to (\ref{kg}) are the critical points of the functional $\mathcal{L}_0[u]:= \int_{\R\times \R^n}{1\over 2}\left( ({\partial u\over \partial t})^2 - |\vec{\nabla} u|^2 - m^2u^2\right)dx$ and hence a first integral is associated to each symmetry. The conservation of the energy $E_t$ is then a consequence of the invariance of this problem by translations in time, whereas the conservation of functionals $I^\varphi_t$ is due to the fact that the Lagrangian functional $\mathcal{L}$ is invariant up to a boundary term by the substitution $u\longmapsto u + s\varphi$. Another point of view, which is closer to differential Geometry, is to consider the set $\mathcal{E}_0$ of all solutions to (\ref{kg}) as a manifold (here we stay vague about the choice of the topology). Then to each time $t$ we associate a system of coordinates on $\mathcal{E}_0$ which is nothing but the Cauchy data $[u]_t:= \left(u(t,\cdot),{\partial u\over \partial t}(t,\cdot)\right)$ and the conservation of, say $I^\varphi_t$, means that a functional $I^\varphi$ can be consistently constructed on $\mathcal{E}_0$ by using, for each time $t$, the expression $I^\varphi_t$ on the coordinate system $u\longmapsto [u]_t$. Note that the functionals $I^\varphi$ play an important role in the quantization of equation (\ref{kg}), since if $\varphi$ is a (complex) solution of (\ref{kg}) of the form $e^{\pm ik\cdot x}$, then the quantization of $I^\varphi$ leads to creation and annihilation operators (see \cite{harrivelhelein}, the resulting quantum fields are then of course free). In this paper we are interested in finding analogous conserved quantities for a more general, \emph{non linear} Klein--Gordon equation:
\begin{equation}\label{kgnl}
{\partial ^2u\over \partial t^2} - \Delta u + m^2 u + W(u,\nabla u) = 0,
\end{equation}
where $W:\R\times\R^{n+1}\longrightarrow \R$ and $\nabla u:= \left( {\partial u\over \partial x^0}, {\partial u\over \partial x^1}, \cdots , {\partial u\over \partial x^n}\right)$. We denote by $\mathcal{E}_W$ the set of solutions of the \emph{non linear} Klein--Gordon equation (\ref{kgnl}). For instance if $W(y,z_0,z_1,\cdots ,z_n) = V'(y)$, where $V:\R\longrightarrow \R$ is a real analytic function, solutions of (\ref{kgnl}) are the critical points of $\mathcal{L}_V[u]:= \int_{\R\times \R^n}{1\over 2}\left( ({\partial u\over \partial t})^2 - |\vec{\nabla} u|^2 - m^2u^2 - V(u)\right)dx$ and in this case the energy $E_t^V[u]:= {1\over 2}\int_{\R^n}\left( ({\partial u\over \partial t})^2 + |\vec{\nabla} u|^2 + m^2u^2 + V(u) \right)|_{x^0=t}\ d\vec{x}$ is still a conserved quantity. However, as soon as $W$ is not linear, there is no way to find non trivial functions $\varphi$ such that the functionals $I^\varphi_t$ be first integrals of (\ref{kgnl}), as observed in \cite{kijowski,goldstern}. This is the reason why it was proposed in \cite{helein} to look at more general functionals, of the form
\begin{equation}\label{genericseries}
\mathcal{F}_t[u]_t = \sum_{p=1}^\infty \mathcal{F}_{t,p}[u]_t^{\otimes p},
\end{equation}
where $[u]_t$ denotes the Cauchy data at time $t$ and, letting $H$ to be the set of all possible values of Cauchy data at some time, each $\mathcal{F}_{t,p}$ is a linear functional on the $p$-th tensor product $H^{\otimes p}:= H\otimes \cdots \otimes H$ and $[u]_t^{\otimes p}:= [u]_t\otimes \cdots \otimes[u]_t\in H^{\otimes p}$. Hence each functional $u\longmapsto \mathcal{F}_{t,p}[u]_t^{\otimes p}$ is a homogeneous polynomial of degree $p$ and $\mathcal{F}_t$ is a series in $u$. In \cite{harrivel}, the first Author proved that such series can be constructed in the case where $W(u,\nabla u) = u^2$. In the following we recall the content of this paper.

We start with some $\varphi\in\mathcal{E}_0$ (i.e. a solution to the \emph{linear} equation (\ref{kg})) and we use its Cauchy data $[\varphi]_0$ at time 0 to obtain a functional $I^\varphi_0$ on $\mathcal{E}$. Then the first result is that it is possible to build a formal series $\mathcal{F}_t^\varphi$ of the type (\ref{genericseries}) s.t. formally $\mathcal{F}_t^\varphi[u]_t = I^\varphi_0[u]$ for all time $t$. The first term in the series is
\[
\mathcal{F}_{t,1}^\varphi[u]_t:= \int_{\{t\}\times \R^n} d\vec{y} \int_{\{0\}\times \R^n}d\vec{x}\  u(y) {\gd \over \partial y^0} G(y-x){\gd \over \partial x^0} \varphi(x),
\]
where $G$ is the unique tempered distribution on $\R\times \R^n$ which is the solution of ${\partial^2 G\over \partial t^2} - \Delta G + m^2G = 0$ and such that $G|_{x^0=0} = 0$ and ${\partial G\over \partial t}|_{x^0=0} = \delta_0$, the Dirac mass at the origin in $\R^n$. Moreover we have introduced the symbol ${\gd \over \partial y^0}$ denoting\footnote{We adopt here the same notation as in \cite{harrivel} but with an \emph{opposite} sign convention, which agrees with the one used by physicists.}: 
\[
a(y){\gd \over \partial y^0}b(y):= a(y){\partial b\over \partial y^0}(y) - {\partial a\over \partial y^0}(y)b(y),
\]
for any functions $a$ and $b$. However since $\varphi\in \mathcal{E}_0$, $\varphi(y) = \int_{\{0\}\times \R^n}d\vec{x}G(y-x){\gd \over \partial x^0} \varphi(x)$, so that
\[
\mathcal{F}_{t,1}^\varphi[u]_t= \int_{\{t\}\times \R^n}d\vec{y}\  u(y){\gd \over \partial y^0}\varphi(y) = I^\varphi_t[u].
\]
The second term $\mathcal{F}_{t,2}^\varphi[u]_t\otimes [u]_t$ is equal to
\[
- \int\int_{(\{t\}\times \R^n)^2} d\vec{y}_1d\vec{y}_2 \int_{[0,t]\times \R^n}dz \int_{\{0\}\times \R^n}d\vec{x}\  u(y_1)u(y_2) {\gd \over \partial y_1^0}{\gd \over \partial y_2^0} G(y_1-z)G(y_2-z)G(z-x){\gd \over \partial x^0} \varphi(x).
\]
These terms can alternatively be described through diagrams, more precisely trees, by using adapted Feynman rules:
\[
\mathcal{F}_{t,1}^\varphi[u]_t =
\xymatrix{\ar@{-}[r] & y \ar@{-}[r]\ar@{-}[d] & \\
\ar@{-}[r] & x \ar@{-}[r] &},
\quad
\mathcal{F}_{t,2}^\varphi[u]_t^{\otimes 2} = -
\xymatrix{\ar@{-}[r] & y_1 \ar@{-}[r]\ar@{-}[dr] & \ar@{-}[r] & y_2 \ar@{-}[r] &\\
& & z\ar@{-}[ur] && \\
 & \ar@{-}[r] & x\ar@{-}[u]\ar@{-}[r] & & }
\]
The variable $x$ at the bottom horizontal line is associated with the insertion on the right hand side in the integral of ${\gd \over \partial x^0} \varphi(x)$ and with an integration over $\vec{x}$ at time $x^0=0$. Similarly each variable $y$ at the top horizontal line is associated with the insertion on the left hand side in the integral of $u(y) {\gd \over \partial y^0}$ and with an integration over $\vec{y}$ at time $y^0=t$. The intermediate vertex $z$ represents an integration over $[0,t]\times\R^n$ and each intermediate edge is associated to the insertion of $G(\hbox{`up'} - \hbox{`down'})$, where `up' and `down' are the variables at the ends of the corresponding edge.

The second content of the result in \cite{harrivel} (still for $W(u,\nabla u) = u^2$) is that one can choose suitable function spaces for respectively $\varphi$ and $u$ such that the preceding series \emph{converges} and defines a first integral. More precisely we let $H^s(\R^n)$ be the Hilbert space of tempered distributions $v\in \mathcal{S}'(\R^n)$ such that $\epsilon^s\widehat{v}\in L^2(\R^n)$, where $\widehat{v}$ is the Fourier transform of $v$ and $\epsilon(\xi):=\sqrt{m^2+|\xi|^2}$. If $s>n/2$, $\varphi$ is in $\mathcal{C}^0(\R,H^{-s}(\R^n))\cap \mathcal{C}^1(\R,H^{-s-1}(\R^n))$ and $u$ is a solution to (\ref{kgnl}) which belongs to $\mathcal{C}^0([0,T], H^{s+2}(\R^n))\cap \mathcal{C}^1([0,T], H^{s+1}(\R^n))$ for some $T$, then, under a reasonable smallness assumption on $T$ and $||u||$, the functional $\mathcal{F}^\varphi_t$ is defined for all $t\in [0,T]$ and the value of $\mathcal{F}^\varphi_t[u]_t$ does not depend on $t\in [0,T]$.

The proof in \cite{harrivel}, which uses a lot of combinatorics, can be extended without difficulties to nonlinearities of the type $W(u,\nabla u) = u^{p-1}$, for all integer $p\geq 3$, by replacing binary trees by $(p-1)$-nary trees. However it seems difficult to extend it to deal with more general nonlinearities or to systems of hyperbolic wave equations. The purpose of our paper is precisely to present a new principle which allows both to construct and to prove the convergence of functionals of the type (\ref{genericseries}) for more general nonlinearities. It is based on the following heuristic construction.\\

We let $\left(\phi^\tx(x)\right)_{x\in \R\times \R^n}$ and $\left(\phi^\td(x)\right)_{x\in \R\times \R^n}$ be respectively families of `creation' and `annihilation' linear operators, acting on some infinite dimensional vector space $\F$, a classical analogue of Fock spaces used in quantum fields theories. They are formally solutions of the linear Klein--Gordon equation (\ref{kg}), i.e. $\square \phi^\tx+m^2\phi^\tx = \square \phi^\td+m^2\phi^\td = 0$. The space $\F$ contains a particular vector $|0\rangle$ and its dual space $\F^*$ another vector $\langle 0|$, such that $\langle 0|0\rangle = 1$. Operators $\phi^\tx(x)$ and $\phi^\td(x)$ obey to commutation relations $[\phi^\tx(x),\phi^\tx(y)] = [\phi^\td(x),\phi^\td(y)] = 0$ and $[\phi^\td(x),\phi^\tx(y)] = G(x-y)$ and their action on $\F$ and $\F^*$ are such that $\phi^\td(x)|0\rangle = 0$ and $\langle 0|\phi^\tx(x) = 0$. Using these rules, for any $\varphi\in \mathcal{E}_0$ and $u\in \mathcal{E}_W$, one can define formally the following expression
\begin{equation}\label{magic}
\mathcal{F}_t^\varphi[u]_t:= \langle [u]_t|  T\hbox{exp}\left(-\int_{0<z^0<t} W(\phi^\tx(z),\nabla\phi^\tx(z))\phi^\td(z)dz\right) |f_\varphi\rangle,
\end{equation}
where we have set
\begin{equation}\label{magic2}
    \langle [u]_t|:= \langle 0|\hbox{exp}\left(\int_{y^0=t}u(y){\gd\over \partial y^0}\phi^\td(y)d\vec{y} \right)
    \quad \hbox{and}\quad
    |f_\varphi\rangle:= \int_{x^0=0}\phi^\tx(x){\gd\over \partial x^0}\varphi(x)d\vec{x}|0\rangle
\end{equation}
and the symbol $T$ in (\ref{magic}) forces the exponential on his right to be time ordered. Hence if we write
\begin{equation}\label{DS}
\D_s:= \int_{\{s\}\times \R^n}
W(\phi^\tx(z),\nabla\phi^\tx(z))\phi^\td(z)d\vec{z}:=
\int_{\R^n}W(\phi^\tx(s,\vec{z}),\nabla\phi^\tx(s,\vec{z}))\phi^\td(s,\vec{z})d\vec{z},
\end{equation}
then
\[
\begin{array}{r}
\displaystyle T \hbox{exp}\left( -\int_{0<z^0<t}W(\phi^\tx(z),\nabla\phi^\tx(z))\phi^\td(z)dz\right)  =  T\hbox{exp}\left(-\int_0^t\D_sds\right)
\\
\displaystyle  := \sum_{k=0}^\infty (-1)^k \int_{0<s_1<\cdots <s_k<t} \D_{s_k}\cdots \D_{s_1}ds_1\cdots ds_k.
\end{array}
\]
It is clear that, if the expression (\ref{magic}) has some meaning, then it depends only on $[u]_t$ and should be of the form (\ref{genericseries}). We will see that actually, under some hypothesis, it defines a time independant functional.\\

For $s>n/2$ we define $\mathcal{E}_W^{s+1}$ to be the set of pairs $((\underline{t},\overline{t}),u)$, where $\underline{t} < 0 < \overline{t}$ and $u\in \mathcal{C}^0((\underline{t},\overline{t}),H^{s+1}(\R^n))\cap \mathcal{C}^1((\underline{t},\overline{t}),H^s(\R^n))$ is a weak solution of the non linear Klein--Gordon equation (\ref{kgnl}). (Note that the condition $s>n/2$ implies that $u$ is $\mathcal{C}^1$ on $(\underline{t},\overline{t})\times \R^n$.) For all time $t\in (\underline{t},\overline{t})$ we note $||[u]_t||_{s+1}:=  ||u(t,\cdot)||_{H^{s+1}} + ||{\partial u\over \partial t}(t,\cdot)||_{H^s}$. Our main result is
\begin{theo}\label{mainthm}
Assume that $W:\R\times \R^{n+1}\longrightarrow \R$ is an entire real analytic function and let $s>n/2$. Then there exists an entire complex analytic vector field $X = X(z){d\over dz}$ on $\C$, such that $\forall z\in \C$, $X(z) = \sum_{k=0}^\infty a_kz^k$, with $a_k\geq 0$, $\forall k\in \N$, which depends only on $W$ and $s$ such that the following holds.
Let $((\underline{t},\overline{t}),u)\in \mathcal{E}_W^{s+1}$ such that
\[
\sup_{0\leq t<\overline{t}}e^{tX}\left(||[u]_t||_{s+1}\right) < + \infty,
\]
then, for any weak solution $\varphi\in \mathcal{C}^0(\R,H^{-s}(\R^n))\cap \mathcal{C}^1(\R,H^{-s-1}(\R^n))$ of the linear Klein--Gordon equation (\ref{kg}), there exists a family of continuous functionals $\left(\mathcal{F}_t^\varphi\right)_{t\in [0,\overline{t})}$ on $H^{s+1}(\R^n)\times H^s(\R^n)$ of the form (\ref{genericseries}), which satisfies
\begin{equation}\label{conservation}
\forall t\in [0,\overline{t}),\quad
\mathcal{F}_t^\varphi[u]_t = I^\varphi_0[u]_0.
\end{equation}
Moreover $\mathcal{F}_t^\varphi[u]_t$ is given by the expression (\ref{magic}).
\end{theo}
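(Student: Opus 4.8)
\medskip
\noindent\emph{Strategy.} The argument has a formal (algebraic) part and an analytic part. For the first, I would realise $\F$ concretely as the symmetric algebra built on (a completion of) the space $\mathcal{E}_0$ of solutions of the linear equation \refeq{kg}, with $|0\rangle$ the unit and $\langle 0|$ the projection onto the degree-zero part; the point $x$ is attached to the one-particle state $y\mapsto G(y-x)$ (which solves \refeq{kg}), $\phi^\tx(x)$ is multiplication by that generator, and $\phi^\td(x)$ is the unique derivation sending the generator attached to $y$ to the scalar $G(x-y)$. One checks immediately the relations $[\phi^\tx,\phi^\tx]=[\phi^\td,\phi^\td]=0$, $[\phi^\td(x),\phi^\tx(y)]=G(x-y)$, $\phi^\td(x)|0\rangle=0$, $\langle 0|\phi^\tx(x)=0$ and $\square\phi^\tx+m^2\phi^\tx=\square\phi^\td+m^2\phi^\td=0$. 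Expanding the exponential in $\langle[u]_t|$ and the time-ordered exponential in \refeq{magic}, and inserting the definition of $|f_\varphi\rangle$, Wick's theorem (each $\phi^\td$ contracts with some $\phi^\tx$, giving a factor $G$, or annihilates $|0\rangle$; each $\langle 0|$ kills a leftover $\phi^\tx$) collapses every term to a fully contracted integral, which is exactly one of the rooted-tree amplitudes of the Feynman-rule picture above; regrouping by the number $p$ of insertions of $u$ yields the form \refeq{genericseries}, each $\mathcal{F}_{t,p}$ being a sum over trees with $p$ leaves.

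\noindent\emph{Formal conservation and value at $t=0$.} Then I would differentiate \refeq{magic} in $t$. Time enters only through the slice $\{y^0=t\}$ in $\langle[u]_t|$ and the upper limit of $U(t):=T\mathrm{exp}(-\int_0^t\D_s\,ds)$, for which $\frac{d}{dt}U(t)=-\D_t U(t)$. Using $\square\phi^\td+m^2\phi^\td=0$, one integration by parts in space, and equation \refeq{kgnl} for $u$, a short computation gives $\frac{d}{dt}\langle[u]_t|=\langle[u]_t|\int_{y^0=t}W(u(y),\nabla u(y))\,\phi^\td(y)\,d\vec y$. The key step is a ``sliding'' identity: from the equal-time behaviour of $G$ ($G=0$, $\partial_0G=\delta_0$ on $\{x^0=0\}$, hence $\partial_iG=0$ and $\partial_0^2G=0$ there), one gets, for $z^0=t$, $\langle[u]_t|\phi^\tx(z)=\langle[u]_t|u(z)$ and $\langle[u]_t|\partial_\mu\phi^\tx(z)=\langle[u]_t|\partial_\mu u(z)$ (push $\phi^\tx(z)$ and its derivatives through the exponential in $\langle[u]_t|$, the commutators being the scalars above, and use $\langle 0|\phi^\tx=0$); as $W$ is entire this turns $\langle[u]_t|\D_t$ into $\langle[u]_t|\int_{z^0=t}W(u(z),\nabla u(z))\phi^\td(z)\,d\vec z$, which cancels exactly $\frac{d}{dt}\langle[u]_t|$, so $\frac{d}{dt}(\mathcal{F}_t^\varphi[u]_t)=0$. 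At $t=0$, $U(0)=\mathrm{id}$ and all terms but $\mathcal{F}_{0,1}^\varphi[u]_0$ vanish (they leave an uncancelled $\phi^\td$ on $|0\rangle$); by the reproducing identity $\int_{x^0=0}G(y-x){\gd\over\partial x^0}\varphi(x)\,d\vec x=\varphi(y)$ recalled above, $\mathcal{F}_{0,1}^\varphi[u]_0=\int_{y^0=0}u(y){\gd\over\partial y^0}\varphi(y)\,d\vec y=I^\varphi_0[u]_0$, which is \refeq{conservation}.

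\noindent\emph{Analytic control and the vector field $X$.} The substance of the proof is to make these identities convergent on $H^{s+1}(\R^n)\times H^s(\R^n)$ and to build $X$. I would bound each tree amplitude factor by factor: each leaf gives a factor $\|[u]_t\|_{s+1}$; each internal edge an operator norm of a retarded propagator $G(t,\cdot)*$ or of one of its first derivatives — these map $H^\sigma\to H^\sigma$, resp.\ $H^\sigma\to H^{\sigma-1}$, uniformly on bounded time intervals; each internal vertex a Taylor coefficient of $W$ times the multiplication constant of the algebra inequality $\|fg\|_{H^s}\le C_s\|f\|_{H^s}\|g\|_{H^s}$ ($s>n/2$). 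Summing over trees with $p$ leaves (their number bounded combinatorially) and over all monomials of the entire function $W$, the total is dominated — term by term in $p$ and in the order $k$ of the time-ordered expansion — by the Taylor expansion in $t$ of the flow $t\mapsto e^{tX}(r)$ of a scalar ODE $\dot y=X(y)$, where $X(z)=\sum_k a_kz^k$ is assembled only from $W$, $s$ and those universal constants, with $a_k\ge 0$. The hypothesis $\sup_{0\le t<\overline t}e^{tX}(\|[u]_t\|_{s+1})<\infty$ then bounds this majorant on $[0,\overline t)$, which gives absolute and locally uniform convergence of the double series, continuity of each $\mathcal{F}_t^\varphi$ on $H^{s+1}\times H^s$, and the legitimacy of the differentiations and rearrangements used above; thus \refeq{conservation} holds rigorously and \refeq{magic} is its closed form.

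\noindent\emph{Main obstacle.} I expect the real difficulty to be concentrated in the last step, namely in building the correct majorant: one must follow how the Sobolev exponents move along a tree so that, after all propagators and vertices have acted, the amplitude still closes up in the $\epsilon^s$-weighted $L^2$ norm and is bounded by a product of universal constants, while using tree-counting estimates sharp enough that the resulting series is the flow of an \emph{entire} vector field (so that the growth condition on $[u]_t$ is the only analytic input). Reconciling this Cauchy--Kovalevskaya-type majorant with the dispersive estimates for $G$ is the crux.
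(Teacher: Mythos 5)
Your formal algebra, the Wick-contraction expansion into tree amplitudes, the sliding identity $\langle[u]_t|\phi^\tx(z)=\langle[u]_t|u(z)$ on the slice $z^0=t$, and the resulting cancellation $\frac{d}{dt}\mathcal{F}^\varphi_t[u]_t=0$ reproduce, correctly, what the paper itself presents as the \emph{heuristic} part (Section \ref{formalsection}). The theorem's actual content is the convergence statement, and there your proposal has a genuine gap: the step you defer to as ``the crux'' --- summing the tree amplitudes over all trees with $p$ leaves and all arities of internal vertices, and showing the total is dominated by the Taylor expansion of the flow $e^{tX}(r)$ --- is precisely the combinatorial work that is not done. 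This is not a routine verification: it is the tree-counting strategy of \cite{harrivel}, which the paper explicitly abandons because it does not extend easily beyond $W(u)=u^{p-1}$ to a general entire $W(u,\nabla u)$ (where internal vertices have unbounded, mixed arities and carry derivatives). Establishing that the weighted generating series of such decorated trees, with the $1/k!$ from the time simplex, is exactly majorized by the flow of an entire $X$ is a Butcher--Cayley-type identity that must be proved, not asserted. A secondary sign that the analytic mechanism has not been located: there are no dispersive estimates to reconcile with anything. The only propagator bound needed is the exact isometry $\|v\sharp_tG^{(k)}\|_{\mathcal{E}^{r-k+1}}=\|v\|_{H^r}$ (Lemma \ref{lemmapp3}), so the Sobolev bookkeeping along a tree is trivial: every edge returns you to level $s$, and the only constants are the algebra constant of $H^s$ ($s>n/2$) and the Taylor coefficients of $W$, which is why $X$ depends only on $W$ and $s$.

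For comparison, the paper avoids trees altogether. It organizes the estimate at the level of operators on the scale of Banach spaces $\F_r$ (analytic functionals on the ball of radius $r$ in $\mathcal{E}^{s+1}_0$, normed by $N_r$): each smeared annihilation operator $\D(s)=-\int_s\mathcal{V}(\phi^\tx)\phi^\td$ is bounded from $\F^{(1)}_r$ to $\F_r$ with $N_r(\D(s)|f\rangle)\le X(r)N^{(1)}_r(|f\rangle)$, i.e.\ it is dominated by the first-order differential operator $X(z)\frac{d}{dz}$ acting on the generating function $z\mapsto N_z(|f\rangle)$ (Lemma \ref{reiterer}). The $k$-fold composition integrated over the time simplex is then bounded by $\frac{t^k}{k!}X^k\cdot N_z$, and the sum over $k$ is closed by the flow identity $h(r)=\sum_k\frac{t^k}{k!}(X^k\cdot h)(e^{-tX}(r))$, yielding that $T\exp\left(\int_0^t\D(s)ds\right)$ maps $\F_r$ to $\F_{e^{-tX}(r)}$ with norm $1$ (Theorem \ref{theo.produit.ordonne.general}); composing with $e^{\U(t)}$, which costs a further shift $r\mapsto r-\|[u]_t\|_{s+1}$, explains exactly why the hypothesis $e^{tX}(\|[u]_t\|_{s+1})<\infty$ is the right one. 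This ``loss of radius'' argument replaces all of your tree combinatorics by a one-dimensional ODE comparison, and it is also what makes the rigorous differentiation of $\alpha(t)=\langle[u]_t|T\exp(\int_0^t\D)|f\rangle$ (Section \ref{conserved_quantities}) possible, since one always lands in a space where the next operator is defined. If you want to salvage your route, you would need to supply the missing generating-function identity for decorated rooted trees; the paper's route is the cleaner way to the same majorant.
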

Actually a large part of this paper is devoted to the construction of a framework in which (\ref{magic}) really makes sense. Once this is done Theorem \ref{mainthm} will follow relatively easily. Furthermore we prove a more general result which apply to nonlinear equations of the type $\square u + m^2 u + \mathcal{V}(u,{\partial u\over \partial t}) = 0$, where $s$ is arbitrary and $\mathcal{V}: H^{s+1}(\R^n)\times H^s(\R^n)\longrightarrow H^s(\R^n)$ is a real analytic functional on a neighbourhood of 0 in $H^{s+1}(\R^n)\times H^s(\R^n)$ (if $\mathcal{V}$ is a \emph{local} functional on $u$ and its space-time derivative, i.e. of the form $u\longmapsto W\circ \left(u,\nabla u\right)$, where $W:\R^{n+2}\longrightarrow \R$ is real analytic, \emph{and if $s>n/2$}, this implies Theorem \ref{mainthm}). Note that, as particular examples of applications of Theorem \ref{mainthm}, we can choose $\varphi$ such that $[\varphi]_0 = (0,\delta_{\vec{x}})$, for some $x = (0,\vec{x})$, then it turns out that $\mathcal{F}_t^\varphi[u]_t = u(x)$. Similarly by choosing $[\varphi]_0 = (-\delta_{\vec{x}},0)$ we obtain $\mathcal{F}_t^\varphi[u]_t = {\partial u\over \partial t}(x)$. Furthermore one can develop the expression given in (\ref{magic}) by using a `Wick theorem' (as explained in Section 1 on a simple example) and then recover an expansion where each term is constructed by using adapted Feynman rules from a Feynman diagram modelled on a tree.\\

The paper is organized as follows. In the first section we expound heuristically the basic ideas behind the construction of the formula (\ref{magic}) and we prove formally that it gives us a conserved quantities. We also explain how to recover the series of \cite{harrivel} described previously by a kind of Wick theorem. Roughly speaking our Fock space $\F$ is composed of real analytic functionals on the infinite dimensional space $\mathcal{E}^{s+1}_0$, $|0\rangle$ is the constant functional which takes the value 1 on each $\varphi\in \mathcal{E}^{s+1}_0$, $\langle 0|$ is the `Dirac mass' at the origin in  $\mathcal{E}^{s+1}_0$, $\phi^\tx$ is the multiplication by a linear functional and $\phi^\td$ is a derivation on it. In the second section we introduce the analytic setting and the consistent definitions. The major difficulty is that the operators built from $\phi^\td$ cannot be bounded and hence their exponentials or time ordered exponentials cannot make sense as bounded operators from a function space to itself. This is the reason why by $\F$ we actually mean a family $\left(\F_r,\F^{(1)}_r\right)_{0<r<\infty}$ of normed spaces linked together by the dense inclusions $\F_R\subset \F_R^{(1)}\subset \F_r\subset \F_r^{(1)}$ if $r<R$ and each symbol $\D_s$ (as defined in (\ref{DS})) represents actually a family of bounded operators from $\F^{(1)}_r$ to $\F_r$. In particular, for any $r\in (0,\infty)$, $X(r)$ (where $X$ is the vector field in Theorem \ref{mainthm}) controls the norm of the operator $\D_s$ from $\F^{(1)}_r$ to $\F_r$. Once these required constructions are done we conclude the fourth section by the proof of Theorem \ref{mainthm}. In the last section we discuss briefly how to extend Theorem \ref{mainthm} to nonlinear systems of hyperbolic equations.\\

Some comments on our method: series expansions of solution to nonlinear \emph{ordinary} differential equations (ODE) have a long history and they have different formulations which are of course related, depending on their use. We can mention Lie series defined by K.T. Chen \cite{chen}, the Chen--Fliess series \cite{fliess} introduced in the framework of control theory by M. Fliess (or some variants like Volterra series or Magnus expansion \cite{magnus}) which are extensively used in control theory \cite{agrachev.gamkrelidze,sussman,kawskisussman} but also in the study of dynamical systems and in numerical analysis. Other major tools are Butcher series which explain the structure of Runge--Kutta methods of approximation of the solution of an ODE. They have been introduced by J.C. Butcher \cite{butcher} and developped by E. Hairer and G. Wanner \cite{hairerwanner} which explain that Runge--Kutta methods are gouverned by trees. Later on C. Brouder \cite{brouder,brouder2} realized that the structure which underlies the original Butcher's computation is exactly the Hopf algebra defined by D. Kreimer in his paper about the renormalization theory \cite{kreimer}. Concerning analogous results on nonlinear \emph{partial} differential equations, it seems that the fact that one can represent solutions or functionals on the set of solutions by series indexed by trees is known to physicists since the work of J. Schwinger and R. Feynman (and Butcher was also aware of that in his original work), although it is difficult to find precise references in the litterature (however for instance a formal series expansion is presented in \cite{duetsch}). However, to our knowledge, the only previous rigorous result (i.e. with a proof of convergence of the series) is the result in \cite{harrivel} already discussed. \\

\noindent \textbf{Notations} --- In the following we will denote by $M:= \R^{n+1}$ the Minkowski space. We fix a space-time splitting $M = \R\times \R^n$ and we note $t = x^0\in \R$ the time coordinate and $\vec{x} = (x^1,\cdots ,x^n)\in \R^n$ the space coordinates. We denote by $\square:= {\partial ^2\over \partial t^2} - \Delta$ the d'Alembertian, where $\Delta:= \sum_{i=1}^n {\partial ^2\over (\partial x^i)^2}$ is the Laplace operator. We define the Fourier transform on smooth fastly decreasing functions $f\in{\cal S}(\R^n)$ by
\[
\hat{f}(\xi) = {1\over \sqrt{2\pi}^n}\int_{\R^n} f(\vec{x})e^{-i\vec{x}\cdot \xi}d\vec{x}.
\]
And we extend it to the Schwartz class ${\cal S}'(\R^n)$ of tempered distributions by the standard duality argument. Then, for $s\in \R$, we let
\[
H^s(\R^n):= \{\varphi\in {\cal S}'(\R^n)|\ \epsilon^s\widehat{\varphi}\in L^2(\R^n)\},
\]
where $\epsilon(\xi) := \sqrt{|\xi|^2+m^2}$ and we set $||\varphi||_{H^s}:= ||\epsilon^s\widehat{\varphi}||_{L^2}$. We denote by $G$ the distribution in $\mathcal{C}^\infty(\R,{\cal S}'(\R^n))$ whose spatial Fourier transform is given by
\[
\widehat{G}(t,\xi) = {1\over \sqrt{2\pi}^n} {\sin \sqrt{|\xi|^2+m^2}t \over \sqrt{|\xi|^2+m^2}} = {1\over \sqrt{2\pi}^n} {\sin\left(\epsilon(\xi)t\right)  \over \epsilon(\xi) }.
\]
Note that $G\in \mathcal{C}^\ell(\R,H^{-s+1-\ell}(\R^n))$ (for $s>n/2$) is nothing but the fundamental solution of
\[
\square G + m^2G=0,
\]
with the initial conditions $G(0,\cdot) = 0$ and ${\partial G\over \partial t}(0,\cdot) = \delta_0$ that we already encountered.

\section{A formal description of the classical Fock space}\label{formalsection}
This part is essentially heuristic and provides the basic ideas which will become rigorous in the following sections.
\subsection{A formal algebra}\label{formalalgebra}
We first define a formal algebra ${\cal A}$ which helps us to define the generating function (\ref{magic}) (leaving aside the delicate question whether such an algebra exists). We let ${\cal A}$ be the algebra spanned over $\R$ by the symbols:
\[
\left(\phi^\tx(x),\phi^\td(y)\right)_{x,y\in M},
\]
where we assume the following properties.
\begin{description}
\item[(i)] $\forall x,y\in M$,  $[\phi^\td(y),\phi^\tx(x)] = G(y-x)$
\item[(ii)]  $\forall x,y\in M$, $[\phi^\tx(y),\phi^\tx(x)] = [\phi^\td(y),\phi^\td(x)] = 0$
\item[(iii)] $\phi^\td(x)$ and $\phi^\tx(x)$ depend smoothly on $x$ and $\square \phi^\tx +
  m^2 \phi^\tx = 0$ and $\square \phi^\td + m^2 \phi^\td = 0$.
\end{description}
We further require the existence of representations of ${\cal A}$ on
two vector spaces $\F$ and $\F^*$ with the following properties. We
denote by $|f\rangle$ elements in $\F$ and $\langle v|$ elements in $\F^*$ and, for any $\psi\in \mathcal{A}$ we write $|f\rangle\longmapsto \psi|f\rangle$ its action on $\F$ and $\langle v|\longmapsto \langle v|\psi$ its action on $\F^*$. We assume that there exists a pairing $\F^*\times \F\ni (\langle v|,|f\rangle)\longmapsto \langle v|f\rangle\in \R$ such that for any element $\psi\in {\cal A}$ on $\F^*$ we have $\forall \langle v|\in \F^*$, $\forall |f\rangle \in \F$, $\left(\langle v|\psi\right)|f\rangle = \langle v|\left(\psi|f\rangle\right) =: \langle w|\psi|f\rangle$.
Lastly we assume that there exist particular vectors $|0\rangle \in \F\setminus\{0\}$ and $\langle 0|\in
\F^*\setminus\{0\}$, such that
\begin{description}
\item[(iv)] $\forall y\in M$, $\phi^\td(y)|0\rangle = 0$
\item[(v)] $\forall x\in M$, $\langle 0|\phi^\tx(x) = 0$
\item[(vi)] $\langle0|0\rangle = 1$.
\end{description}
A simple consequence of properties (i) and (ii) is the following result, analogous to a special case of the `Wick theorem' used in quantum field theory.
\begin{lemm} For all $k\in \N^*$,
$\forall y_1,\cdots ,y_k\in M$, $\forall x\in M$,
\begin{equation}\label{wickbasic}
\left[\phi^\td(y_1)\cdots \phi^\td(y_k), \phi^\tx(x)\right] =  \sum_{j=1}^k G(y_j-x) \phi^\td(y_1)\cdots
\widehat{\phi^\td(y_j)} \cdots \phi^\td(y_k),
\end{equation}
where $\phi^\td(y_1)\cdots \widehat{\phi^\td(y_j)} \cdots \phi^\td(y_k):= \phi^\td(y_1)\cdots
\phi^\td(j_{j-1})\phi^\td(j_{j+1})\cdots \phi^\td(y_k)$.
\end{lemm}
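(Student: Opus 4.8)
The plan is to prove the commutator identity \refeq{wickbasic} by induction on $k$, using only properties (i) and (ii) of the algebra ${\cal A}$ together with the elementary Leibniz-type rule for commutators.

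First I would record the two basic facts I will use repeatedly. From property (i), for any single $y\in M$ one has $[\phi^\td(y),\phi^\tx(x)] = G(y-x)$, and crucially the right-hand side is a \emph{scalar} (an element of $\R$, since $G(y-x)$ is a number), hence it commutes with every element of ${\cal A}$. From property (ii), $\phi^\td(y)$ and $\phi^\td(y')$ commute for all $y,y'\in M$, so the product $\phi^\td(y_1)\cdots\phi^\td(y_k)$ is unambiguous and symmetric in its arguments. Finally I will use the general derivation property of the commutator bracket, $[AB,C] = A[B,C] + [A,C]B$, valid in any associative algebra.

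Now the induction. The base case $k=1$ is exactly property (i). For the inductive step, assume \refeq{wickbasic} holds for $k-1$ and write $\phi^\td(y_1)\cdots\phi^\td(y_k) = \phi^\td(y_1)\,\bigl(\phi^\td(y_2)\cdots\phi^\td(y_k)\bigr)$. Applying $[AB,C] = A[B,C] + [A,C]B$ with $A = \phi^\td(y_1)$, $B = \phi^\td(y_2)\cdots\phi^\td(y_k)$, $C = \phi^\tx(x)$ gives
\[
\left[\phi^\td(y_1)\cdots\phi^\td(y_k),\phi^\tx(x)\right]
= \phi^\td(y_1)\left[\phi^\td(y_2)\cdots\phi^\td(y_k),\phi^\tx(x)\right]
+ \left[\phi^\td(y_1),\phi^\tx(x)\right]\phi^\td(y_2)\cdots\phi^\td(y_k).
\]
The second term is $G(y_1-x)\,\phi^\td(y_2)\cdots\phi^\td(y_k)$, which is precisely the $j=1$ summand on the right-hand side of \refeq{wickbasic}. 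For the first term, the induction hypothesis expands the inner commutator as $\sum_{j=2}^k G(y_j-x)\,\phi^\td(y_2)\cdots\widehat{\phi^\td(y_j)}\cdots\phi^\td(y_k)$; multiplying on the left by $\phi^\td(y_1)$ and using that each $G(y_j-x)$ is a scalar, we obtain $\sum_{j=2}^k G(y_j-x)\,\phi^\td(y_1)\cdots\widehat{\phi^\td(y_j)}\cdots\phi^\td(y_k)$, i.e. the $j\ge 2$ summands. Adding the two contributions yields the full sum $\sum_{j=1}^k$, completing the induction.

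There is no real obstacle here: the only point requiring a little care is that the coefficients $G(y_j-x)$ in property (i) are genuine scalars and thus may be freely pulled through products of the $\phi^\td$'s — this is what makes the induction close cleanly — and that property (ii) guarantees the hatted products are well defined independently of ordering. One could equivalently phrase the argument as the statement that $\psi\mapsto[\psi,\phi^\tx(x)]$ restricted to the (commutative) subalgebra generated by the $\phi^\td(y)$'s is a derivation into the module of scalars, acting on generators by $\phi^\td(y)\mapsto G(y-x)$, but the direct induction above is the most self-contained way to write it.
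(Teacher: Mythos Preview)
Your proof is correct and follows essentially the same approach as the paper: induction on $k$, with the base case given by property (i) and the inductive step obtained by splitting off one factor and using the Leibniz rule $[AB,C]=A[B,C]+[A,C]B$. The paper phrases the inductive step as a direct manipulation of $\phi^\td_1\cdots\phi^\td_{k+1}\phi^\tx$ rather than invoking the Leibniz identity by name, but the computation is the same.
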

{\em Proof} --- We prove (\ref{wickbasic}) by recursion on $k$. For
$k= 1$ this identity is nothing but (i). Assume that (\ref{wickbasic})
has been proved for $k$ points $y_j$'s, then, $\forall y_1,\cdots
,y_{k+1} \in M$, $\forall x\in M$, (by denoting $\phi^\td_j:=
\phi^\td(y_j)$ and $\phi^\tx:= \phi^\tx(x)$ for short)
\[
\begin{array}{ccl}
\phi^\td_1\cdots \phi^\td_{k+1} \phi^\tx & = & \displaystyle \phi^\td_1\left(\phi^\td_2\cdots \phi^\td_{k+1} \phi^\tx\right) =  \phi^\td_1\left[ \phi^\tx \phi^\td_2\cdots \phi^\td_{k+1}
  + \sum_{j=2}^{k+1} G(y_j-x) \phi^\td_2\cdots
\widehat{\phi^\td_j} \cdots \phi^\td_{k+1}\right]\\
& = & \displaystyle \phi^\tx\phi^\td_1\phi^\td_2\cdots \phi^\td_{k+1} + G(y_1-x)\phi^\td_2\cdots \phi^\td_{k+1}
+ \phi^\td_1 \sum_{j=2}^{k+1} G(y_j-x) \phi^\td_2\cdots
\widehat{\phi^\td_j} \cdots \phi^\td_{k+1}.
\end{array}
\]
and the result follows for $k+1$ points.\bbox

\subsection{Time invariance of the functional defined by (\ref{magic})}\label{timeinvariance}
We will see that, formally, the functional defined by (\ref{magic}) is time independent. We first introduce more concise notations, setting
\[
\int_t u\gd \phi^\td:=
\int_{y^0=t}u(y){\gd\over \partial y^0}\phi^\td(y)d\vec{y},
\]
\[
\int_0^t W(\phi^\tx,\nabla \phi^\tx)\phi^\td:=
\int_{0<z^0<t} W(\phi^\tx(z),\nabla \phi^\tx(z))\phi^\td(z)dz,
\]
and
\[
|f_\varphi\rangle:= \int_0\phi^\tx \gd \varphi|0\rangle := \int_{x^0=0}\phi^\tx(x){\gd\over \partial x^0}\varphi(x)d\vec{x}|0\rangle,
\]
so that we may write
\begin{equation}\label{Ftbasic}
\mathcal{F}^\varphi_t[u]_t = \langle 0|\ \hbox{exp}\left(\int_t u\gd \phi^\td\right)
T \hbox{exp}\left(-\int_0^t W(\phi^\tx,\nabla \phi^\tx)\phi^\td\right)|f_\varphi\rangle.
\end{equation}
As a preliminary we derive some relations which are consequences of (\ref{wickbasic}). It first implies that for all $k\in \N$,
\begin{equation}\label{wick1}
\left[\left(\int_tu\gd\phi^\td\right)^k,\phi^\tx(x)\right] = k \int_{y^0=t} u(y){\gd\over \partial y^0}G(y-x)\  \left(\int_tu\gd\phi^\td\right)^{k-1}.
\end{equation}
This follows from (\ref{wickbasic}) by writing
\[
\left(\int_tu\gd\phi^\td\right)^k = \int_{\{y_1^0 = \cdots = y_k^0 =t\}} u_1\cdots u_k\overleftrightarrow{\partial_1}\cdots\overleftrightarrow{\partial_k}\phi^\td_1\cdots \phi^\td_k\
d\vec{y}_1\cdots d\vec{y}_k,
\]
with the convention that $u_j:= u(y_j)$ and $\phi^\td_j:= \phi^\td(y_j)$. Then (\ref{wick1}) implies that for any analytic function of one variable $f$,
\begin{equation}\label{wick2}
\left[f\left(\int_tu\gd\phi^\td\right),\phi^\tx(x)\right] = \int_{y^0=t} u(y){\gd\over \partial y^0}G(y-x)\ f'\left(\int_tu\gd\phi^\td\right).
\end{equation}
But assuming that $x^0 = t$ and that $f(\zeta) = e^{\zeta}$, this gives
\begin{equation}\label{wick3}
\left[\hbox{exp}\left(\int_tu\gd\phi^\td\right),\phi^\tx(x)\right] = u(x)\  \hbox{exp}\left(\int_tu\gd\phi^\td\right)\quad \hbox{if }x^0=t,
\end{equation}
because of the Cauchy conditions on $G$ at time 0. However since $\langle 0|\phi^\tx(x) = 0$ (\ref{wick3}) implies in particular
\[
\langle0|\hbox{exp}\left(\int_tu\gd\phi^\td\right)\phi^\tx(x) = \langle0|u(x)\hbox{exp}\left(\int_tu\gd\phi^\td\right).
\]
We deduce from this relation that
\begin{equation}\label{wick4}
\langle0|\hbox{exp}\left( \int_tu\gd\phi^\td\right) \int_tW(\phi^\tx,\nabla \phi^\tx)\phi^\td = \langle0|\int_tW(u,\nabla u)\phi^\td\ \hbox{exp}\ \left(\int_tu\gd\phi^\td\right).
\end{equation}
Now assuming that the expression in (\ref{Ftbasic}) is differentiable with respect to time $t$,
\[
\begin{array}{ccl}
\displaystyle {d\mathcal{F}^\varphi_t[u]_t\over dt} & = &\displaystyle
\langle 0|{d\over dt}\left(\int_t u\gd \phi^\td\right)\hbox{exp}\left( \int_t u\gd \phi^\td\right)
T  \hbox{exp}\left(-\int_0^t W(\phi^\tx,\nabla \phi^\tx)\phi^\td\right)|f_\varphi\rangle \\
&  & \displaystyle - \langle 0|\ \hbox{exp}\left(\int_t u\gd \phi^\td\right) \left(\int_t W(\phi^\tx,\nabla \phi^\tx)\phi^\td\right) T \hbox{exp}\left(- \int_0^t W(\phi^\tx,\nabla \phi^\tx)\phi^\td\right)|f_\varphi\rangle.
\end{array}
\]
We hence deduce from (\ref{wick4}) that ${d\mathcal{F}^\varphi_t[u]_t\over dt}$ is equal to
\[
\langle 0|\left({d\over dt}\left(\int_t u\gd \phi^\td\right) - \int_t W(u,\nabla u)\phi^\td\right) \hbox{exp}\left(\int_t u\gd \phi^\td\right)
T  \hbox{exp}\left(-\int_0^t W(\phi^\tx,\nabla \phi^\tx)\phi^\td\right)|f_\varphi\rangle.
\]
But by using the fact that $\phi^\td$ is a solution of the linear Klein--Gordon equation and an integration by parts in space variables we obtain
\[
\begin{array}{ccl}
\displaystyle {d\over dt}\left(\int_t u\gd \phi^\td\right) = \int_t u\gd^2 \phi^\td & = & \displaystyle  \int_{\R^n}u{\partial^2\phi^\td\over \partial t^2} - {\partial^2u\over \partial t^2}\phi^\td = \int_{\R^n}u\left(\Delta \phi^\td - m^2\phi^\td\right) - {\partial^2u\over \partial t^2}\phi^\td\\
& = & \displaystyle \int_{\R^n}\left(\Delta u - m^2u\right)\phi^\td - {\partial^2u\over \partial t^2}\phi^\td = -\int_{\R^n} (\square u+ m^2u)\phi^\td.
\end{array}
\]
Hence ${d\mathcal{F}^\varphi_t[u]_t\over dt}$ is equal to
\[
\langle 0|\int_t (-\square u- m^2u- W(u,\nabla u))\phi^\td \hbox{exp}\left(\int_t u\gd \phi^\td\right)
T  \hbox{exp}\left(-\int_0^t W(\phi^\tx,\nabla \phi^\tx)\phi^\td\right)|f_\varphi\rangle,
\]
which vanishes if $\square u+ m^2u+ W(u,\nabla u) = 0$.

\subsection{Expanding the generating functions $\mathcal{F}_t[u]_t$}
The `Wick theorem' (\ref{wickbasic}) together with rules (iv) and (v) allow us also to recover an expansion of the functional $\mathcal{F}^\varphi_t[u]_t$ defined by (\ref{magic}) in terms of finite integrals defined by trees by using Feynman rules. To illustrate this we consider the simplest case, i.e. when $W(u,\nabla u) = \lambda u^2$, for some $\lambda \in \R$, and consider
\begin{equation}\label{serieexample}
\mathcal{F}^\varphi_t[u]_t = \langle 0|\ \hbox{exp}\left(\int_t u\gd \phi^\td\right)
T \hbox{exp}\left(-\int_0^t \lambda (\phi^\tx)^2\phi^\td\right)\int_0\phi^\tx\gd\varphi|0\rangle.
\end{equation}
First note that all expressions of the form $\langle0| \phi^\td_1\cdots \phi^\td_p\phi^\tx_1\cdots \phi^\tx_q|0\rangle$ (where $\phi^\td_i$, $\phi^\tx_j$ denote respectively $\phi^\td(y_i)$, $\phi^\tx(x_j)$ for some points $y_i,x_j\in M$) vanish unless $p = q$, as can be shown by repeated applications of (\ref{wickbasic}) and of (iv) and (v). Hence for instance the coefficient of the 0th power of $\lambda$ in the series (\ref{serieexample}) is
\[
\mathcal{F}^\varphi_{t,1}[u]_t = \langle 0|\ \hbox{exp}\left(\int_t u\gd \phi^\td\right) 1 \int_0\phi^\tx\gd\varphi|0\rangle
 = \langle 0| \int_t u_y\overleftrightarrow{\partial_y} \phi^\td_y\int_0\phi^\tx_x\overleftrightarrow{\partial_x}\varphi_x|0\rangle,
\]
where we set $u_y:= u(y)$, $\overleftrightarrow{\partial_y}:= {\gd\over \partial y^0}$, $\phi^\td_y:= \phi^\td(y)$, $\phi^\tx_x:= \phi^\tx(x)$, etc. However it follows from (i), (iv) and (v) that $\langle 0|\phi^\td_y\phi^\tx_x|0\rangle = \langle 0|G_{yx} + \phi^\tx_x\phi^\td_y|0\rangle = G_{yx}$ (where $G_{yx}:= G(y-x)$). Hence
\[
\mathcal{F}^\varphi_{t,1}[u]_t = \int_{y^0=t}\int_{x^0=0} u_y\overleftrightarrow{\partial_y} G_{yx}\overleftrightarrow{\partial_x}\varphi_x = I^\varphi_t[u]_t.
\]
The coefficient of the first power of $\lambda$ is
\[
\begin{array}{ccl}
\mathcal{F}^\varphi_{t,2}[u]_t^{\otimes 2} & = & \displaystyle \langle 0|\ \hbox{exp}\left(\int_t u\gd \phi^\td\right)
\left(-\int_0^t  (\phi^\tx)^2\phi^\td\right)\int_0\phi^\tx\gd\varphi|0\rangle\\
 & = & \displaystyle - \langle 0|{1\over 2} \int_t\int_tu_1u_2\overleftrightarrow{\partial_1} \overleftrightarrow{\partial_2} \phi_1^\td\phi_2^\td\int_0^t (\phi^\tx_z)^2\phi^\td_z\int_0\phi^\tx_x\overleftrightarrow{\partial_x}\varphi|0\rangle,
\end{array}
\]
where, for $a = 1,2$, $u_a:= u(y_a)$, $\overleftrightarrow{\partial_a}:= {\gd\over \partial y_a^0}$, $\phi_a^\td:= \phi^\td(y_a)$ and $\phi^\tx_z:= \phi^\tx(z)$, etc. Denoting also by $G_{az}:= G(y_a-z)$, for $a=1,2$, we have by (\ref{wickbasic}) $[\phi^\td_1\phi^\td_2,\phi^\tx_z] = G_{1z}\phi^\td_2 + G_{2z}\phi^\td_1$, which implies by (v) that $\langle 0|\phi^\td_1\phi^\td_2\phi^\tx_z = G_{1z}\langle 0|\phi^\td_2
 + G_{2z}\langle 0|\phi^\td_1$. A second application of (\ref{wickbasic}) and (v) gives then
\[
\langle 0|\phi^\td_1\phi^\td_2\left(\phi^\tx_z\right)^2 = G_{1z}\langle 0|\phi^\td_2 \phi^\tx_z
 + G_{2z}\langle 0|\phi^\td_1\phi^\tx_z = \left(G_{1z}G_{2z} + G_{2z}G_{1z}\right) \langle 0|.
\]
We hence deduce that
\[
\begin{array}{ccl}
\mathcal{F}^\varphi_{t,2}[u]_t^{\otimes 2} & = & \displaystyle  - \int_t\int_tu_1u_2\overleftrightarrow{\partial_1} \overleftrightarrow{\partial_2}\int_0^tG_{1z}G_{2z}\langle 0| \phi^\td_z\int_0\phi^\tx_x\overleftrightarrow{\partial_x}\varphi|0\rangle, \\
& = & \displaystyle  -\int_t\int_tu_1u_2\overleftrightarrow{\partial_1} \overleftrightarrow{\partial_2}\int_0^tG_{1z}G_{2z}\int_0G_{zx}\overleftrightarrow{\partial_x}\varphi,
\end{array}
\]
where we set $G_{zx}:= G(z-x)$ and we further use (\ref{wickbasic}) and (iv). We hence recover the same expression for $\mathcal{F}^\varphi_{t,2}[u]_t^{\otimes 2}$ as the one obtained in \cite{harrivel} and expounded in the introduction.

\subsection{How to construct the representation of $\mathcal{A}$ on $\F$ and $\F^*$}\label{howto}

The heuristic idea to construct ${\cal A}$ and its representation $\F$ is the following. We still denote by ${\cal E}_0$ the space of solutions of the linear Klein--Gordon equation (\ref{kg}) $\square \varphi + m^2\varphi = 0$ on $M$ and we let $\F$ to be the set of analytic functionals $|f\rangle:{\cal E}_0\longrightarrow \R$, i.e. such that
\begin{equation}\label{definitionfphi}
\forall \varphi\in \mathcal{E}_0,\quad
|f\rangle(\varphi) = \sum_{p=0}^\infty |f_p\rangle(\varphi\otimes \cdots \otimes \varphi) = \sum_{p=0}^\infty |f_p\rangle\left(\varphi^{\otimes p}\right),
\end{equation}
where we can view each $|f_p\rangle$ as a symmetric $p$-multilinear functional on $\left(\mathcal{E}_0\right)^p$ or, alternatively, as a linear form on $\left(\mathcal{E}_0\right)^{\otimes p}$.
Lastly we let ${\cal A}$ to be the algebra of linear operators acting on $\F$ and $\F^*$ the dual space of $\F$.
\begin{itemize}
\item \textbf{The definition of $\phi^\tx(x)$} --- To any point $x\in M$ we associate a particular (linear) functional on $\mathcal{E}_0$, the evaluation at $x$:
\[
\begin{array}{cccc}
|x\rangle:& \mathcal{E}_0 & \longrightarrow & \R\\
 & \varphi & \longmapsto & |x\rangle(\varphi):= \varphi(x).
\end{array}
\]
And we let $\phi^\tx(x)$ to be the operator of multiplication of functionals in $\F$ by the functional $|x\rangle$:
\[
\begin{array}{cccc}
\phi^\tx(x):& \F & \longrightarrow & \F\\
 & |f\rangle & \longmapsto & |x\rangle|f\rangle,
\end{array}
\]
i.e. $\forall \varphi\in \mathcal{E}_0$, $\left(\phi^\tx(x)|f\rangle\right)(\varphi) = \varphi(x)\left(|f\rangle(\varphi)\right)$.
\item \textbf{The definition of $\phi^\td(y)$} --- We remark that, for any $y\in M$, the distribution $\Gamma_y:x\longmapsto G(y-x)$ is a weak solution of the linear Klein--Gordon equation, hence an element of $\mathcal{E}_0$. Thus, for any $|f\rangle\in \F$, we can formally define the variational derivative
\[
{\delta |f\rangle\over \delta \Gamma_y}(\varphi):=
\lim_{\varepsilon\rightarrow 0} {|f\rangle(\varphi + \varepsilon
  \Gamma_y) - |f\rangle(\varphi)\over \varepsilon}.
\]
and we let
\[
\begin{array}{cccc}
\phi^\td(y):& \F & \longrightarrow & \F\\
 & |f\rangle & \longmapsto & {\delta |f\rangle\over \delta \Gamma_y}.
\end{array}
\]
\end{itemize}
An alternative, more algebraic definition of $\phi^\td(y)$ for $|f\rangle$ given by (\ref{definitionfphi}) is
\[
\left(\phi^\td(y)|f\rangle\right)(\varphi) = \sum_{p=1}^\infty p\,|f_p\rangle(\Gamma_y\otimes \varphi\otimes \cdots \otimes \varphi).
\]
Then we observe that by using the Leibniz rule
\[
\phi^\td(y)\phi^\tx(x)|f\rangle =
{\delta (\phi^\tx(x)|f\rangle)\over \delta \Gamma_y} =
{\delta (|x\rangle|f\rangle)\over \delta \Gamma_y} = {\delta|x\rangle \over \delta
  \Gamma_y} |f\rangle + |x\rangle {\delta |f\rangle\over \delta
  \Gamma_y}.
\]
But since $|x\rangle$ is a linear functional ${\delta|x\rangle \over \delta
  \Gamma_y} = |x\rangle(\Gamma_y) = G(y-x)$, hence
\[
\phi^\td(y)\phi^\tx(x)|f\rangle = G(y-x)|f\rangle + \phi^\tx(x) {\delta |f\rangle\over \delta \Gamma_y} = G(y-x)|f\rangle + \phi^\tx(x)\phi^\td(y)|f\rangle.
\]
We thus deduce that
\[
\left[ \phi^\td(y),\phi^\tx(x)\right] = G(y-x).
\]
Moreover it is obvious that (still at a formal level) $[\phi^\td(y),\phi^\td(y')] =
[\phi^\tx(x),\phi^\tx(x')] = 0$. Lastly we let $|0\rangle\in \F$ to be the
constant functional ${\cal E}_0\ni \varphi \longmapsto 1$ and
$\langle 0|\in\F^*$ to be
\[
\begin{array}{cccc}
\langle 0|: & \F & \longrightarrow & \R\\
& |f\rangle & \longmapsto & |f\rangle(0),
\end{array}
\]
i.e. $\langle 0|$ plays the role of the Dirac mass at $0\in \mathcal{E}_0$. Then obviously (iv), (v) and (vi) are satisfied. In this representation we remark that an interpretation of $\langle [u]_t|$ in (\ref{magic}) is possible. Indeed if we think the operator $\int_tu\gd\phi^\td$ as a constant vector field on the infinite dimensional space $\mathcal{E}_0$, then the exponential $\hbox{exp}\left(\int_tu\gd\phi^\td\right)$ acts on $\F$ by translation, i.e. through $\left(\hbox{exp}\left(\int_tu\gd\phi^\td\right)|f\rangle\right)(\varphi) = |f\rangle(\varphi + \varphi_{[u]_t})$, where $\varphi_{[u]_t}\in \mathcal{E}_0$ is actually the solution of (\ref{kg}) which has the same Cauchy data at time $t$ as $u$ (in the notations of Paragraph \ref{paragraph2.2.2}, $\varphi_{[u]_t}:= u\sharpgdt G$). Hence we deduce that $\langle [u]_t|:= \langle 0|\hbox{exp}\left(\int_tu\gd\phi^\td\right)$ is the Dirac mass at $\varphi_{[u]_t}\in \mathcal{E}_0$. These facts will be proved in Corollary \ref{coroexp}.

\subsection{Towards a well-defined theory}
All the preceding constructions are completely formal, as long as we do not precise any topology on $\F$, $\F^*$ and $\mathcal{A}$. Moreover we would like to find topologies in such a way that the operators $\phi^\tx$ and $\phi^\td$ are simultaneously well-defined. Here the main difficulty occurs, indeed:
\begin{itemize}
\item[(a)] on the one hand, in order to define $\phi^\tx(x)$, we need to make sense of $|x\rangle: \varphi\longmapsto \varphi(x)$ as a continuous operator and this requires the functions $\varphi$ in $\mathcal{E}_0$ to be continuous. Actually a careful inspection of the formal computations done in section \ref{timeinvariance} reveals that one also needs to define ${\partial \phi^\tx\over \partial x^0}$, which means that we actually need that $\varphi$ be of class $\mathcal{C}^1$ with respect to time;
\item[(b)] on the other hand the definition of $\phi^\td(y)$ is consistent if we can differentiate with respect to $\Gamma_y$, i.e. if $\Gamma_y\in \mathcal{E}_0$. However $\Gamma_y$ is only a distribution and hence this is in conflict with the first requirement.
\end{itemize}
The key observation to avoid these difficulties is that, in the definition of (\ref{magic}) and in the formal computations done in section \ref{timeinvariance} we only need to define
\[
\U(t):= \int_tu\gd\phi^\td,\quad  \V(t):= -\int_t(\square u + m^2u)\phi^\td\quad
\hbox{and} \quad \D(t):= \int_tW(\phi^\tx,\nabla \phi^\tx)\phi^\td
\]
as continuous maps of the time $t$, with values in a set of continuous operators and to assume that $\U$ is derivable with respect to time, with ${d\U\over dt} = \V$. Hence we will assume that functions in $\mathcal{E}_0$ are sufficiently smooth, so that operators $\phi^\tx$ and ${\partial \phi^\tx\over \partial t}$ will be well-defined. However we will be able to make sense of $\U$ and $\V$ if $u$ is sufficiently smooth and to define $\D$ again if functions in $\mathcal{E}_0$ are sufficiently smooth. Note that, on the infinite dimensional manifold $\mathcal{E}_0$, $\U$ and $\V$ can be viewed as tangent vector fields with constant coefficients, whereas $\D$ is a tangent vector field with non constant analytic coefficients.


\section{The analytic setting}\label{sectionsetting}
In this section we define precisely the `Fock space' $\F$ and introduce the operators $\U$, $\V$ and $\D$ as well as the exponential of these operators in order to make sense of \refeq{magic}.

\subsection{Functions spaces}\label{sub:functions_spaces} 
First we define more precisely the space ${\cal E}_0$ of solution of the linear Klein--Gordon equation \refeq{kg}, in particular we describe the topology of this space. For $s\in\R$ we define ${\cal E}^{s+1}_0$ by
\[
\mathcal{E}_0^{s+1}:= \{\varphi\in \mathcal{C}^0(\R,H^{s+1}(\R^n))\cap \mathcal{C}^1(\R,H^s(\R^n))|\ \square \varphi + m^2\varphi = 0\}.
\]
Each map $\varphi\in \mathcal{E}_0^{s+1}$ is characterized by its Cauchy data $[\varphi]_0 = (\varphi_0,\varphi_1)\in H^{s+1}(\R^n)\times H^s(\R^n)$ at time $t=0$. Indeed one recovers $\varphi$ from $(\varphi_0,\varphi_1)$ through the relation
\[
\hat{\varphi}(t,\xi) = \hat{\varphi}_0(\xi)\cos \epsilon(\xi)t +
\hat{\varphi}_1(\xi){\sin \epsilon(\xi)t\over \epsilon(\xi)}\quad \quad (\epsilon(\xi):= \sqrt{|\xi|^2+m^2}),
\]
where $\hat{\varphi}_0$ and $\hat{\varphi}_1$ are the spatial Fourier transform of respectively $\varphi_0$ and $\varphi_1$. Note that the quantity
\[
||\varphi||^2_{{\cal E}^{s+1}}:= ||\varphi(t,\cdot)||_{H^{s+1}}^2 + ||{\partial\varphi\over \partial t}(t,\cdot)||_{H^s}^2
\]
is independant of $t\in \R$.
\begin{defi}
For each $p\in \N$ we let $(\bigotimes^p\mathcal{E}_0^{s+1})^*$ be the space of linear applications $|f_p\rangle:\bigotimes^p\mathcal{E}_0^{s+1}:= \mathcal{E}_0^{s+1}\otimes \cdots \otimes \mathcal{E}_0^{s+1} \longrightarrow \R$ which are continuous, i.e. such that there exists a constant $C>0$ such that $\forall \varphi_1,\cdots ,\varphi_p\in \mathcal{E}_0^{s+1}$, $||f_p\rangle(\varphi_1\otimes \cdots \otimes \varphi_p)| \leq C\,
 ||\varphi_1||_{{\cal E}^{s+1}_0}\cdots ||\varphi_p||_{{\cal E}^{s+1}_0}$. We denote by $\lc f_p\rf$ the optimal value of $C$ in this inequality, so that we have: 
\begin{equation}\label{Fcontinuous}
||f_p\rangle(\varphi_1\otimes \cdots \otimes \varphi_p)| \leq \lc f_p\rf\,
 ||\varphi_1||_{{\cal E}^{s+1}_0}\cdots ||\varphi_p||_{{\cal E}^{s+1}_0}, \quad
 \forall \varphi_1,\cdots ,\varphi_p\in \mathcal{E}_0^{s+1}.
\end{equation}
We let $S(\bigotimes^p\mathcal{E}_0^{s+1})^*$ be the subspace of $|f_p\rangle\in (\bigotimes^p\mathcal{E}_0^{s+1})^*$ which are symmetric, i.e. for all permutation $\sigma\in \mathfrak{S}_p$,
\begin{equation}\label{symmetric}
|f_p\rangle(\varphi_{\sigma(1)}\otimes \cdots \otimes \varphi_{\sigma(p)}) = |f_p\rangle(\varphi_1\otimes \cdots \otimes \varphi_p),\quad
\forall \varphi_1,\cdots ,\varphi_p\in \mathcal{E}_0^{s+1}.
\end{equation}
\end{defi}
We can now define our `Fock space':
\begin{defi} For any $s\in \R$ and $r\in (0,+\infty)$, we let $\F_r$ be the space of formal series
\[
|f\rangle = \sum_{p=0}^\infty |f_p\rangle,
\]
where each $|f_p\rangle\in S(\bigotimes^p\mathcal{E}_0^{s+1})^*$ and such that the quantity
\begin{equation}\label{norme}
N_r(|f\rangle):= \sum_{p=0}^\infty \lc f_p\rf r^p
\end{equation}
is finite. Then $\left( \F_r,N_r\right)$ is a Banach space.
\end{defi}
Actually we can identify any $|f_p\rangle\in
S(\bigotimes^p\mathcal{E}_0^{s+1})^*$ with a continuous homogeneous
polynomial map of degree $p$ from $\mathcal{E}^{s+1}_0$ to $\R$ by
the relation $|f_p\rangle(\varphi):= |f_p\rangle(\varphi\otimes \cdots \otimes
\varphi) = |f_p\rangle(\varphi^{\otimes p})$ and hence series $|f\rangle \in \F_r$ with convergent analytic series on $B_{{\cal E}^{s+1}_0}(0,r)$, the ball of radius $r$ in
$\mathcal{E}_0^{s+1}$, by the relation
\[
|f\rangle (\varphi) = \sum_{p=0}^\infty |f_p\rangle(\varphi^{\otimes p}).
\]
Let us define $|0\rangle$ to be the constant functional over ${\cal E}^{s+1}_0$ which is equal to $1$:
\[
    |0\rangle:\varphi\in{\cal E}^{s+1}_0\longmapsto 1.
\]
Then $|0\rangle$ belongs to $\F_r$ for all $r>0$ and $N_r(|0\rangle)=1$. \\

\noindent
We will also extend the definition
(\ref{norme}) to a complex variable $z$:
\[
N_z(|f\rangle):= \sum_{p=0}^\infty \lc f_p\rf z^p,
\quad \forall z\hbox{ such that }|z|\leq r,
\]
which gives us, for any fixed $|f\rangle\in \F_r$, a holomorphic
function on the ball $B_\C(0,r)\subset \C$. We define, for $k\in \N$,
\[
N_r^{(k)}(|f\rangle):= {d^k\over dz^k}N_z(|f\rangle)|_{z=r} \quad
\hbox{and} \quad  \F_r^{(k)}:= \{|f\rangle\in \F_r|\
N_r^{(k)}(|f\rangle) < +\infty\},
\]
so that for instance $N_r^{(1)}(|f\rangle):= \sum_{p=1}^\infty
p\lc f_p\rf r^{p-1}$. We set $\F_\infty:= \cap_{r>0}\F_r$ and
$\F_{pol}:= \{|f\rangle = \sum_{p=0}^N|f_p\rangle |\ N\in \N,
|f_p\rangle\in S(\bigotimes^p\mathcal{E}_0^{s+1})^*\}$. Note that,
using in particular the obvious inequality $r<R\ \Longrightarrow \
N_r(\varphi) < N_R(\varphi)$, we have the dense inclusions
\[
\forall r,R\in (0,\infty), \hbox{ s.t. }r<R, \forall k\in \N,\quad
\F_{pol} \subsetneq \F_\infty \subsetneq \F^{(k+1)}_R\subsetneq \F^{(k)}_R
\subset \F_R \subsetneq \F_r.
\]
Since the space $\F_r$ is an (infinite dimensional) topological vector space for all $r>0$, we can consider its topological dual space, denoted by $(\F_r)^*$. For example we can consider the linear form $\langle 0|$ over $\F_r$ defined by
\[
\langle 0| : \F_r\ni |f\rangle \longmapsto \langle 0|f\rangle:=|f\rangle(0),
\]
Then one can easily see that $\langle 0|$ belongs to $\cap_{r>0}(\F_r)^*$ and that we have $\langle 0|0\rangle=1$. More generally if $u$ belongs to $\cap_{\ell=0,1}{\cal C}^\ell((\underline{t},\overline{t}),H^{s+1-\ell}(\R^n))$ then for $t\in(\underline{t},\overline{t})$ we can consider $\varphi_{[u]_t}$ the (unique) element of ${\cal E}^{s+1}_0$ such that $[\varphi_{[u]_t}]_t=[u]_t$ (i.e. $\varphi_{[u]_t}$ is the solution of the linear Klein--Gordon equation \refeq{kg} with the same Cauchy data at time $t$ as $[u]_t$). Then since $||\varphi_{[u]_t}||_{{\cal E}^{s+1}_0}=||[u]_t||_{s+1}$, we can consider $|f\rangle(\varphi_{[u]_t})$ to be the evaluation of the functional $|f\rangle$ on $\varphi_{[u]_t}$, for all $r>0$ such that $r>||[u]_t||_{s+1}$ and, for all $|f\rangle\in\F_r$. This define a linear form $\langle [u]_t|$ over $\F_r$ :
\[
    \forall r>||[u]_t||_{s+1};\quad \langle [u]_t|:\F_r\ni |f\rangle\longmapsto \langle [u]_t|f\rangle:=|f\rangle(\varphi_{[u]_t}).
\]
Then $\langle [u]_t|$ belongs to $\cap_{r>||[u]_t||_{s+1}} (\F_r)^*$.
\begin{defi}
For any $r_0\in (0,\infty]$ and any $k,\ell\in \N$ a \textbf{continuous operator $\T$ from $\F_{(0,r_0)}^{(k)}$ to $\F_{(0,r_0)}^{(\ell)}$} is a family $\left( \T_r\right)_{0<r<r_0}$, where, for any $r\in (0,r_0)$, $\T_r:\F_r^{(k)}\longrightarrow \F_r^{(\ell)}$ is a continuous linear operator with norm $||\T_r||$ and such that, $\forall r,r'\in (0,r_0)$, if $r<r'$, then the restriction of $\T_r$ to $\F_{r'}^{(k)}$ coincides with $\T_{r'}$. Moreover, if $X:(0,r_0)\longrightarrow (0,\infty)$ is a locally bounded function, we say that \textbf{the norm of $\T$ is controlled by $X$} if, $\forall r\in (0,r_0)$, $||\T_r||\leq X(r)$.\\
For simplicity we systematically denote each operator $\T_r$ by $\T$ in the following.
In the case where $r_0=\infty$, we will just write that \textbf{$\T$ is a continuous operator from $\F^{(k)}$ to $\F^{(\ell)}$}.
\end{defi}
The following result concerns an example of a continuous operator from $\F^{(1)}$ to $\F$ with a norm controlled by the constant function $||\psi||_{\mathcal{E}^{s+1}_0}$.
\begin{lemm}\label{Ldeltapsi}
Let $s\in \R$ and $\psi\in \mathcal{E}_0^{s+1}$. Then for any $r\in (0,\infty)$ and $|f\rangle\in \F_r^{(1)}$, the functional ${\delta |f\rangle\over \delta \psi}$ defined by
\[
\forall \varphi\in B_{{\cal E}^{s+1}_0}(0,r),\quad
{\delta |f\rangle\over \delta \psi}(\varphi):= \sum_{p=1}^\infty p|f_p\rangle(\psi\otimes \varphi^{\otimes p-1})
\]
belongs to $\F_r$ and
\begin{equation}\label{deltapsi}
N_r\left({\delta |f\rangle\over \delta \psi}\right) \leq ||\psi||_{\mathcal{E}^{s+1}_0} N_r^{(1)}(|f\rangle).
\end{equation}
\end{lemm}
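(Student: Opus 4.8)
The plan is to realize $\frac{\delta|f\rangle}{\delta\psi}$ explicitly as a series $\sum_{q=0}^\infty|g_q\rangle$ of symmetric homogeneous components and to bound the norms $\lc g_q\rf$ one at a time. Writing $|f\rangle = \sum_{p=0}^\infty|f_p\rangle$ with $|f_p\rangle\in S(\bigotimes^p\mathcal{E}_0^{s+1})^*$, I would set, for each $q\in\N$,
\[
|g_q\rangle(\varphi_1\otimes\cdots\otimes\varphi_q) := (q+1)\,|f_{q+1}\rangle(\psi\otimes\varphi_1\otimes\cdots\otimes\varphi_q),
\]
so that after the reindexing $p=q+1$ one has $\sum_{q\geq 0}|g_q\rangle(\varphi^{\otimes q}) = \sum_{p\geq 1}p\,|f_p\rangle(\psi\otimes\varphi^{\otimes p-1})$, i.e. $\sum_q|g_q\rangle$ is exactly the functional $\frac{\delta|f\rangle}{\delta\psi}$ of the statement, provided the series converges.

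First I would check that $|g_q\rangle\in S(\bigotimes^q\mathcal{E}_0^{s+1})^*$. Multilinearity is inherited from $|f_{q+1}\rangle$; symmetry under $\mathfrak{S}_q$ follows from the symmetry \refeq{symmetric} of $|f_{q+1}\rangle$ applied to the permutations that fix its first argument. Continuity, together with the quantitative estimate, comes from inserting $\psi$ into the first slot of \refeq{Fcontinuous}:
\[
\left| |g_q\rangle(\varphi_1\otimes\cdots\otimes\varphi_q) \right| \leq (q+1)\,\lc f_{q+1}\rf\,||\psi||_{\mathcal{E}^{s+1}_0}\,||\varphi_1||_{\mathcal{E}^{s+1}_0}\cdots||\varphi_q||_{\mathcal{E}^{s+1}_0},
\]
whence $|g_q\rangle$ is continuous and, $\lc\cdot\rf$ being the optimal constant, $\lc g_q\rf \leq (q+1)\,\lc f_{q+1}\rf\,||\psi||_{\mathcal{E}^{s+1}_0}$.

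Summing against $r^q$ then gives
\[
N_r\!\left(\frac{\delta|f\rangle}{\delta\psi}\right) = \sum_{q=0}^\infty\lc g_q\rf r^q \leq ||\psi||_{\mathcal{E}^{s+1}_0}\sum_{q=0}^\infty(q+1)\,\lc f_{q+1}\rf r^q = ||\psi||_{\mathcal{E}^{s+1}_0}\sum_{p=1}^\infty p\,\lc f_p\rf r^{p-1} = ||\psi||_{\mathcal{E}^{s+1}_0}\,N_r^{(1)}(|f\rangle).
\]
Since $|f\rangle\in\F_r^{(1)}$ the right-hand side is finite; this simultaneously shows that $\sum_q|g_q\rangle$ is a genuine element of $\F_r$, that the series defining $\frac{\delta|f\rangle}{\delta\psi}(\varphi)$ converges absolutely on $B_{\mathcal{E}^{s+1}_0}(0,r)$ (so the pointwise formula in the statement is meaningful and coincides with $\sum_q|g_q\rangle$), and that \refeq{deltapsi} holds.

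There is no serious obstacle here: the lemma is a bookkeeping exercise with the definitions of $N_r$ and $N_r^{(1)}$. The only two points needing minor care are that the reindexing must be carried out so that the combinatorial weight $p$ appearing in $N_r^{(1)}$ is matched by the factor $q+1=p$ produced by ``selecting which of the $p$ arguments of $|f_p\rangle$ receives $\psi$'', and that one only ever uses the inequality $\lc g_q\rf\leq(q+1)\,\lc f_{q+1}\rf\,||\psi||_{\mathcal{E}^{s+1}_0}$ between optimal constants, no equality being claimed or needed.
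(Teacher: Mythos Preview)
Your proof is correct and follows essentially the same route as the paper: identify the homogeneous components $|g_q\rangle$ of $\frac{\delta|f\rangle}{\delta\psi}$, bound $\lc g_q\rf\leq (q+1)\,\lc f_{q+1}\rf\,||\psi||_{\mathcal{E}^{s+1}_0}$ via \refeq{Fcontinuous}, and sum against $r^q$ to recover $||\psi||_{\mathcal{E}^{s+1}_0}N_r^{(1)}(|f\rangle)$. Your version is slightly more explicit in checking symmetry and continuity of $|g_q\rangle$, but the argument is the same.
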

\begin{proof}
    For any $\varphi\in \mathcal{E}_0^{s+1}$ we have
    \[
    \forall p\in \N,\quad \left| p|f_p\rangle\left(\psi\otimes \varphi^{\otimes p-1} \right)\right| \leq p\lc f_p\rf  ||\psi||_{\mathcal{E}^{s+1}_0} ||\varphi||^{p-1}_{\mathcal{E}^{s+1}_0}.
    \]
    Hence if we denote $|g\rangle:= {\delta |f\rangle\over \delta \psi}$ we deduce that $\lc g_{p-1}\rf \leq p||\psi||_{\mathcal{E}^{s+1}_0} \lc f_p\rf $ and so
    \[
    N_r\left({\delta |f\rangle\over \delta \psi}\right) = \sum_{p=1}^\infty \lc g_{p-1}\rf r^{p-1} \leq \sum_{p=1}^\infty p||\psi||_{\mathcal{E}^{s+1}_0} \lc f_p\rf r^{p-1} = ||\psi||_{\mathcal{E}^{s+1}_0} N_r^{(1)}(|f\rangle).
    \]
\end{proof}

\subsection{Definition of the operators}
Now we define the creation and annihilation operators and derive some basic properties from the definitions.
\subsubsection{Creation operators}
For any $t\in \R$, $\phi_1\in H^{-s-1}(\R^n)$ we define $\int_t|x\rangle \phi_1(x) \in \F_{pol}$ to be the (linear) functional
\[
\begin{array}{cccc}
\displaystyle \int_t|x\rangle \phi_1(x) : & \mathcal{E}^{s+1}_0 & \longrightarrow & \R\\
& \varphi &\longmapsto & \displaystyle \int_{\R^n}\varphi(t,\vec{x}) \phi_1(\vec{x}) d\vec{x},
\end{array}
\]
which is obviously continuous with $N_r(\int_t|x\rangle \phi_1(x)) = r||\phi_1||_{H^{-s-1}}$, $\forall r>0$. For example, if $s>n/2$ and $\phi_1= \delta_{\vec{x}}$, then $\int_t|x\rangle \phi_1(x) = |x\rangle:\varphi\longmapsto \varphi(x)$, where $x=(t,\vec{x})$ (see Lemma \ref{lemmapp1}). Similarly we define, for $\phi_0\in H^{-s}(\R^n)$,
\[
\begin{array}{cccc}
\displaystyle \int_t{\partial |x\rangle\over \partial t} \phi_0(x) : & \mathcal{E}^{s+1}_0 & \longrightarrow & \R\\
& \varphi &\longmapsto & \displaystyle \int_{\R^n}{\partial \varphi\over \partial t}(t,\vec{x}) \phi_0(\vec{x}) d\vec{x},
\end{array}
\]
with $N_r(\int_t{\partial |x\rangle\over \partial t} \phi_0(x)) = r||\phi_0||_{H^{-s}}$, $\forall r>0$. If we take $s>n/2$ and $\phi_0= \delta_{\vec{x}}$, then $\int_t{\partial|x\rangle\over \partial t} \phi_1(x) = {\partial|x\rangle\over \partial t}:\varphi\longmapsto {\partial\varphi\over \partial t}(x)$, where $x=(t,\vec{x})$. This leads us to the definition of the operators
\[
\int_t\phi^\tx \phi_1:|f\rangle\longmapsto \left(\int_t|x\rangle \phi_1(x)\right) |f\rangle
\quad\hbox{and}\quad
\int_t{\partial \phi^\tx\over \partial t} \phi_0:|f\rangle\longmapsto \left(\int_t{\partial |x\rangle\over \partial t} \phi_0(x)\right) |f\rangle,
\]
which are clearly continuous operators from $\F$ to $\F$ with norm controlled by $r||\phi_1||_{H^{-s-1}}$ and $r||\phi_0||_{H^{-s}}$ respectively. Hence, for any $\varphi\in \mathcal{E}^{s+1}_0$ and $r>0$, we can define
\[
\int_t\phi^\tx\gd \varphi:= \int_t{\partial \phi^\tx\over \partial t}\varphi -  \int_t\phi^\tx {\partial \varphi\over \partial t}: \F_r\longmapsto \F_r.
\]

\subsubsection{Annihilation operators}\label{paragraph2.2.2}
For any $q\in \R$, $k\in \N$, $t\in \R$ and for any function $v\in
H^q(\R^n)$ we denote by $v\sharp_tG^{(k)}$ the distribution on
$\R\times \R^n$ defined by
\begin{equation}\label{vsharpG}
\forall x = (x^0,\vec{x})\in \R^{n+1},\quad v\sharp_tG^{(k)}(x):= \int_{\R^n}
v(\vec{y}){\partial^kG\over \partial t^k} (t-x^0, \vec{y}-\vec{x})
d\vec{y}.
\end{equation}
The various properties of $v\sharp_tG^{(k)}$ are derived in Lemma \ref{lemmapp3}. They imply that, if $u\in
\mathcal{C}^\ell((\underline{t},\overline{t}),H^{s+1-\ell}(\R^n))$,
then $\forall t\in (\underline{t},\overline{t})$,
$\left({\partial^\ell u\over \partial t^\ell}|_t\right)\sharp_t
G^{(k)} \in \mathcal{E}_0^{s+2-k-\ell}(\R^n)$ with the identity
\begin{equation}\label{ulsharpGk}
  ||\left({\partial^\ell u\over \partial t^\ell}|_t \right)\sharp_t G^{(k)}||_{\mathcal{E}^{s+2-k-\ell}}
 = ||{\partial^\ell u\over \partial t^\ell}|_t||_{H^{s+1-\ell}}.
\end{equation}
Now, for $u\in \cap_{\ell=0,1}\mathcal{C}^\ell((\underline{t},\overline{t}),H^{s+1-\ell}(\R^n))$, we let
\[
u\sharpgdt G:=
\left(u|_t \right)\sharp_t G^{(1)} - \left({\partial u\over \partial t}|_t \right)\sharp_tG \in \mathcal{E}_0^{s+1}.
\]
Note that, alternatively, $u\sharpgdt G$ could be defined as the unique element in $\mathcal{E}_0^{s+1}$ which shares the same Cauchy data at time $t$ as $u$. Thus we can define operator $\int_tu\gd\phi^\td$ (also denoted by $\U(t)$) by the following
\begin{equation}\label{defop2}
 \U(t)=\int_tu\gd\phi^\td:= {\delta\over \delta \left(u\sharpgdt G\right)}.
\end{equation}
This operator satisfies the :
\begin{lemm}
    Let $u$ belong to $\cap_{\ell=0,1}\mathcal{C}^\ell((\underline{t},\overline{t}),H^{s+1-\ell}(\R^n))$ then for all $t\in(\underline{t},\overline{t})$, the operator $\U(t)=\int_tu\gd\phi^\td$ given by \refeq{defop2} is a continuous operator from $\F^{(1)}$ to $\F$. Moreover the norm of $\U(t)$ is controlled by the constant  $||[u]_t||_{s+1}$ (we recall that we have set $||[u]_t||_{s+1}:=||u|_{t}||_{H^{s+1}}+||{\partial u \over \partial t}|_t||_{H^s}$).
\end{lemm}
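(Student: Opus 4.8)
The plan is to deduce the lemma directly from Lemma~\ref{Ldeltapsi}. By its very definition~\refeq{defop2}, the operator $\U(t)$ is nothing but $\delta/\delta\psi$ with $\psi:=u\sharpgdt G$; so once we know that $\psi$ is a genuine element of $\mathcal{E}_0^{s+1}$ satisfying $||\psi||_{\mathcal{E}^{s+1}_0}\le||[u]_t||_{s+1}$, Lemma~\ref{Ldeltapsi} immediately gives, for every $r\in(0,\infty)$ and every $|f\rangle\in\F_r^{(1)}$, that $\frac{\delta|f\rangle}{\delta\psi}\in\F_r$ with $N_r\!\left(\frac{\delta|f\rangle}{\delta\psi}\right)\le||[u]_t||_{s+1}\,N_r^{(1)}(|f\rangle)$, which is exactly the asserted estimate.

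The first step is therefore to check that $\psi\in\mathcal{E}_0^{s+1}$ with the announced bound. Writing $\psi=(u|_t)\sharp_tG^{(1)}-(\frac{\partial u}{\partial t}|_t)\sharp_tG$ as in the definition preceding the statement, I would apply the identity~\refeq{ulsharpGk} (a consequence of Lemma~\ref{lemmapp3}) twice: with $(\ell,k)=(0,1)$ it gives $(u|_t)\sharp_tG^{(1)}\in\mathcal{E}_0^{s+1}$ of norm $||u|_t||_{H^{s+1}}$, and with $(\ell,k)=(1,0)$ it gives $(\frac{\partial u}{\partial t}|_t)\sharp_tG\in\mathcal{E}_0^{s+1}$ of norm $||\frac{\partial u}{\partial t}|_t||_{H^s}$ (in both cases the exponent $s+2-k-\ell$ collapses to $s+1$). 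Hence $\psi\in\mathcal{E}_0^{s+1}$ and, by the triangle inequality, $||\psi||_{\mathcal{E}^{s+1}_0}\le||u|_t||_{H^{s+1}}+||\frac{\partial u}{\partial t}|_t||_{H^s}=||[u]_t||_{s+1}$. More conceptually, $\psi$ coincides with $\varphi_{[u]_t}$, the unique solution of~\refeq{kg} in $\mathcal{E}_0^{s+1}$ sharing the Cauchy data of $u$ at time $t$, which makes the bound transparent.

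The second step is routine bookkeeping to match the definition of ``continuous operator from $\F^{(1)}$ to $\F$'': Lemma~\ref{Ldeltapsi} produces for each $r$ a bounded linear map $\U(t)_r:\F_r^{(1)}\to\F_r$ with $||\U(t)_r||\le||[u]_t||_{s+1}$, and the compatibility of the family $\left(\U(t)_r\right)_{0<r<\infty}$ under restriction (if $r<r'$ then $\U(t)_r$ restricted to $\F_{r'}^{(1)}$ equals $\U(t)_{r'}$) is immediate from the $r$-independent series formula $\sum_p|f_p\rangle\longmapsto\sum_{p\geq 1}p\,|f_p\rangle(\psi\otimes\varphi^{\otimes p-1})$. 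This exhibits $\U(t)$ as a continuous operator from $\F^{(1)}$ to $\F$ whose norm is controlled by the constant function $||[u]_t||_{s+1}$, as claimed.

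I do not expect a genuine obstacle here, since the statement is essentially a corollary of Lemma~\ref{Ldeltapsi}. The only point requiring care --- and the one I would watch most closely --- is the membership $u\sharpgdt G\in\mathcal{E}_0^{s+1}$ together with the precise norm identity~\refeq{ulsharpGk}; but these are isolated into Lemma~\ref{lemmapp3}, so here it suffices to apply that lemma with the correct indices and to observe that the resulting regularity shifts land exactly on $s+1$.
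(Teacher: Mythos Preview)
Your proposal is correct and follows essentially the same route as the paper: verify via~\refeq{ulsharpGk} (Lemma~\ref{lemmapp3}) that $u\sharpgdt G\in\mathcal{E}_0^{s+1}$ with norm bounded by $||[u]_t||_{s+1}$, then invoke Lemma~\ref{Ldeltapsi}. Your additional remark on the compatibility of the family $(\U(t)_r)_r$ is a harmless explicitation of what the paper leaves implicit.
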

\begin{proof}
    By using (\ref{ulsharpGk}) we get $||\left({\partial u\over \partial t}|_t \right)\sharp_tG||_{\mathcal{E}^{s+1}} = ||{\partial u\over \partial t}|_t||_{H^{s}}$ and $||\left(u|_t \right)\sharp_t G^{(1)}||_{\mathcal{E}^{s+1}} =   ||u|_t||_{H^{s+1}}$, so that $||u\sharpgdt G||_{\mathcal{E}^{s+1}} =   ||[u]_t||_{s+1}$.   Thus it follows from Lemma \ref{Ldeltapsi} that, for all $t\in(\underline{t},\overline{t})$ and for all $r>0$, $\int_t u \gd\but : \F_r^{(1)}\longrightarrow \F_r$ is a well defined bounded operator and for all $t\in(\underline{t},\overline{t})$
    \begin{equation}\label{estop1}
    \forall |f\rangle\in\F_r^{(1)};\quad N_r\left(\int_t u \gd \but
    |f\rangle\right)\le ||[u]_t||_{s+1} N_r^{(1)}(|f\rangle)
    \end{equation}
    which completes the proof.
\end{proof}

\noindent We now look in which circumstances one can define 
\[
\V(t) = -\int_t(\square u+m^2u)\phi^\td:= -{\delta\over \delta \left((\square u+m^2u)|_t\sharp_t G\right)}
\]
and show that ${d\U\over dt}(t) = \V(t)$.
\begin{lemm}\label{lemme.u}
Let $u\in \mathcal{C}^0((\underline{t},\overline{t}),H^{s+1}(\R^n))
\cap \mathcal{C}^1((\underline{t},\overline{t}),H^s(\R^n))$,
assume that there exists some $J\in
\mathcal{C}^0((\underline{t},\overline{t}),H^s(\R^n))$ such that
\begin{equation}\label{weaksol}
\square u + m^2u = - J\quad \hbox{in the distribution sense on }(\underline{t},\overline{t})\times \R^n
\end{equation}
and set
\begin{equation}\label{Jphi}
\int_tJ\phi^\td :=
{\delta \over \delta \left(\left(J|_t\right)\sharp_t G\right)}.
\end{equation}
Then the map $t \longmapsto u\sharpgdt G$ from
$(\underline{t},\overline{t})$ to $\mathcal{E}_0^{s+1}$ is of class
$\mathcal{C}^1$ and
\begin{equation}\label{Jsharp}
{d \over dt}\left( u\sharpgdt G\right) = \left(J|_t\right)\sharp_tG\quad \hbox{in }\mathcal{E}_0^{s+1}.
\end{equation}
As a consequence we have
\begin{equation}\label{delta/deltaphi}
{d \over dt}\left(\int_t u \gd \but\right) = \int_tJ\phi^\td.
\end{equation}
\end{lemm}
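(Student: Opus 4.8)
The plan is to transport the statement to the Cauchy‑data isomorphism. Recall that $[\cdot]_0:\mathcal{E}_0^{s+1}\longrightarrow H^{s+1}(\R^n)\times H^s(\R^n)$ is a topological isomorphism and that there is a strongly continuous group $(\Psi_\tau)_{\tau\in\R}$ acting on each space $H^{\sigma+1}(\R^n)\times H^\sigma(\R^n)$ by propagation of Cauchy data, generated by $\mathcal{A}=\left(\begin{smallmatrix}0&1\\\Delta-m^2&0\end{smallmatrix}\right)$ with domain $H^{\sigma+2}\times H^{\sigma+1}$, so that every solution $\varphi$ of the linear equation satisfies $[\varphi]_\tau=\Psi_\tau[\varphi]_0$. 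Since $u\sharpgdt G$ is by definition the unique element of $\mathcal{E}_0^{s+1}$ sharing the Cauchy data of $u$ at time $t$, we have $[\,u\sharpgdt G\,]_0=\Psi_{-t}[u]_t$, and everything reduces to showing that $t\longmapsto\Psi_{-t}[u]_t$ is of class $\mathcal{C}^1$ with values in $H^{s+1}\times H^s$ and identifying its derivative.

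First I would bootstrap the regularity of $u$ from \refeq{weaksol}: since $u\in\mathcal{C}^0(H^{s+1})\cap\mathcal{C}^1(H^s)$ and $(\Delta-m^2)u-J\in\mathcal{C}^0(H^{s-1})$, the distributional identity $\partial_t^2u=(\Delta-m^2)u-J$, together with the elementary fact that a continuous Banach‑space‑valued map whose distributional derivative is continuous is of class $\mathcal{C}^1$ (applied to $\partial_tu$ viewed as $H^{s-1}$‑valued), shows that $u\in\mathcal{C}^2((\underline t,\overline t);H^{s-1})$ with the equation holding classically. In particular $t\longmapsto[u]_t$ is $\mathcal{C}^1$ into $H^s\times H^{s-1}$ with derivative $(\partial_tu,\partial_t^2u)$, while remaining continuous into the smaller space $H^{s+1}\times H^s$.

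Next, working in $X:=H^s\times H^{s-1}$, on which $\Psi$ is a $\mathcal{C}_0$‑group with generator $\mathcal{A}$ of domain $H^{s+1}\times H^s$: since $[u]_t$ lies in that domain for every $t$ and $t\longmapsto[u]_t$ is $\mathcal{C}^1$ into $X$ and continuous for the graph norm of $\mathcal{A}$, the standard product rule for $\mathcal{C}_0$‑groups gives that $t\longmapsto\Psi_{-t}[u]_t$ is $\mathcal{C}^1$ into $X$ with
\[
\frac{d}{dt}\big(\Psi_{-t}[u]_t\big)=\Psi_{-t}\Big(\frac{d}{dt}[u]_t-\mathcal{A}[u]_t\Big)=\Psi_{-t}\big(0,\,-J|_t\big),
\]
the last equality by \refeq{weaksol}. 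Now comes the delicate point, the one place where the equation really enters: the derivative just obtained is in fact continuous with values in the \emph{smaller} space $H^{s+1}\times H^s$ — indeed $(0,-J|_t)\in H^{s+1}\times H^s$ because $J|_t\in H^s$, the curve $t\mapsto(0,-J|_t)$ is continuous into $H^{s+1}\times H^s$, and $\Psi$ is also a $\mathcal{C}_0$‑group there. Invoking the elementary lemma ``if $g$ is continuous into $Y\hookrightarrow X$, differentiable into $X$, and $g'$ is continuous into $Y$, then $g$ is $\mathcal{C}^1$ into $Y$'' (write $g(t)-g(t_0)=\int_{t_0}^tg'$, a Bochner integral which already converges in $Y$), one concludes that $t\longmapsto\Psi_{-t}[u]_t$ is $\mathcal{C}^1$ with values in $H^{s+1}\times H^s$, with derivative $\Psi_{-t}(0,-J|_t)$. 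I expect this to be the main obstacle: a naive term‑by‑term differentiation of $u\sharpgdt G$ only controls the result in $\mathcal{E}_0^s$ (one derivative being lost by $\mathcal{A}$), and it is the cancellation forced by \refeq{weaksol}, packaged in this mixed‑topology argument, that pulls the derivative back into $\mathcal{E}_0^{s+1}$.

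Finally, a direct computation from \refeq{vsharpG} shows that, for $v\in H^s$, the Cauchy data of $v\sharp_tG$ at time $t$ are $(0,-v)$ (since $G(0,\cdot)=0$ and $\tfrac{\partial G}{\partial t}(0,\cdot)=\delta_0$), whence $[(J|_t)\sharp_tG]_0=\Psi_{-t}(0,-J|_t)$; applying $[\cdot]_0^{-1}$ to the result of the previous step yields that $t\longmapsto u\sharpgdt G$ is $\mathcal{C}^1$ into $\mathcal{E}_0^{s+1}$ with $\tfrac{d}{dt}(u\sharpgdt G)=(J|_t)\sharp_tG$, i.e.\ \refeq{Jsharp}. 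Then \refeq{delta/deltaphi} follows by composing this $\mathcal{C}^1$ curve with the map $\psi\longmapsto\tfrac{\delta}{\delta\psi}$, which is linear in $\psi$ (clear from the defining series) and bounded from $\mathcal{E}_0^{s+1}$ into the Banach space of continuous operators $\F_r^{(1)}\to\F_r$ by Lemma~\ref{Ldeltapsi}; hence $t\longmapsto\int_tu\gd\but=\tfrac{\delta}{\delta(u\sharpgdt G)}$ is $\mathcal{C}^1$ with derivative $\tfrac{\delta}{\delta((J|_t)\sharp_tG)}=\int_tJ\phi^\td$.
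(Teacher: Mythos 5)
Your argument is correct, but it follows a genuinely different route from the paper. The paper's proof is a direct weak-formulation computation: it tests $t\mapsto\big(u\sharpgdt G\big)(x)$ against $\chi'(t)$ for $\chi\in\mathcal{C}^\infty_0((\underline{t},\overline{t}))$, integrates by parts in space-time using that $G_x:=G(\cdot-x)$ is a weak solution of (\ref{kg}) and that $u$ satisfies (\ref{weaksol}) only in the distribution sense, obtains (\ref{Jsharp}) as an identity of distributions in $t$, and then upgrades it to the strong $\mathcal{C}^1$ statement because $t\mapsto (J|_t)\sharp_tG$ is continuous into $\mathcal{E}^{s+1}_0$; no extra regularity of $u$ is ever needed. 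You instead first bootstrap $u$ to $\mathcal{C}^2((\underline{t},\overline{t}),H^{s-1})$ so the equation holds classically, rewrite $u\sharpgdt G$ through the Cauchy-data isomorphism as $\Psi_{-t}[u]_t$ for the Klein--Gordon propagator group, apply the product rule for $\mathcal{C}_0$-groups in the large space $H^s\times H^{s-1}$ (where the equation produces exactly $\Psi_{-t}(0,-J|_t)$), and recover $\mathcal{C}^1$ regularity in the small space $H^{s+1}\times H^s$ by your two-space integral lemma; your identification $[(J|_t)\sharp_tG]_t=(0,-J|_t)$ and the signs are consistent with the paper's conventions, and your deduction of (\ref{delta/deltaphi}) from (\ref{Jsharp}) via linearity and Lemma \ref{Ldeltapsi} is the same as the paper's. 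What each approach buys: the paper's computation is self-contained, uses no semigroup machinery and exploits the weak hypothesis on $u$ directly; yours is more structural (an interaction-picture/variation-of-constants viewpoint) and makes visible why the equation is precisely the cancellation that keeps the derivative in $\mathcal{E}^{s+1}_0$, at the modest cost of the regularity bootstrap and of the (routine but glossed) conversion of the space-time distributional identity (\ref{weaksol}) into an $H^{s-1}$-valued distributional identity in $t$, which is done by testing against products $\chi(t)\eta(\vec{x})$.
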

\begin{proof} We only need to show (\ref{Jsharp}) in the distribution sense: this
will imply that (\ref{Jsharp}) holds also strongly since
$[t\longmapsto \left(J|_t\right)\sharp_tG]$ is a continuous map into
$\mathcal{E}^{s+1}_0$. Thus let $\chi\in
\mathcal{C}^\infty_0((\underline{t},\overline{t}))$ and $x\in
\R^{n+1}$ and let us compute
\[
\begin{array}{ccl}
\displaystyle \int_{\underline{t}}^{\overline{t}} dt\chi'(t) \left( u\sharpgdt G\right)(x) & = &
\displaystyle \int_{\underline{t}}^{\overline{t}} dt \chi'(t)
\int_{\R^n}d\vec{y}\left( u(t,\vec{y}){\partial G\over \partial
t}(t-x^0,\vec{y}-\vec{x}) \right.\\
& & \displaystyle \quad \quad \quad \quad \quad \quad \quad \quad
\quad \left. - {\partial u\over
\partial t}(t,\vec{y})G(t-x^0,\vec{y}-\vec{x}) \right)\\ & = & \displaystyle
\int_{\R^{n+1}}dy\chi'(y^0) \left( u(y){\partial G\over \partial
t}(y-x) - {\partial u\over
\partial t}(y)G(y-x)\right),
\end{array}
\]
where we have set $y = (y^0,\vec{y}) = (t,\vec{y})$. Now we observe
that, by denoting $G_x(y):= G(y-x)$,
\[
\chi'(y^0) \left( u{\partial G_x\over \partial
t} - {\partial u\over
\partial t}G_x \right)(y) = \left({\partial (\chi u)\over
\partial y^0}{\partial G_x\over \partial t} - {\partial u\over \partial t} {\partial (\chi G_x)\over \partial y^0}\right)(y).
\]
Thus by setting $\psi(y):= \chi(y^0) G_x(y)$ and $v(y):=
\chi(y^0)u(y)$, we obtain
\begin{equation}\label{chi'}
\int_{\underline{t}}^{\overline{t}} dt\chi'(t) \left( u\sharpgdt G\right)(x)  = \int_{\R^{n+1}}dy
\left({\partial v\over \partial t}{\partial G_x\over \partial t} - {\partial u\over \partial t}{\partial \psi\over \partial t}\right)(y)
\end{equation}
Now since $G_x\in\mathcal{E}_0^{-s+1}$, i.e. $G_x$ is a weak solution of
(\ref{kg}), we have
\[
\int_{\R^{n+1}}dy {\partial v\over \partial t}{\partial G_x\over \partial t} = \int_{\R^{n+1}}dy \left(\vec{\nabla}v\cdot \vec{\nabla}G_x + m^2 vG_x\right) = \int_{\R^{n+1}}dy \chi \left(\vec{\nabla}u\cdot \vec{\nabla}G_x + m^2 uG_x\right).
\]
Similarly by using (\ref{weaksol}) and the fact that $\vec{\nabla}\chi = 0$, we deduce
\[
\int_{\R^{n+1}}dy {\partial u\over \partial t}{\partial \psi\over \partial t} = \int_{\R^{n+1}}dy \left(\vec{\nabla}u\cdot \vec{\nabla}\psi + m^2 u\psi + J\psi\right) = \int_{\R^{n+1}}dy \chi \left(\vec{\nabla}u\cdot \vec{\nabla}G_x + m^2 uG_x +  JG_x\right).
\]
Hence we deduce from (\ref{chi'}) that
\[
\int_{\underline{t}}^{\overline{t}} dt\chi'(t) \left( u\sharpgdt G\right)(x) = -\int_{\R^{n+1}}dy \chi(y^0)J(y) G(y-x) = -\int_{\underline{t}}^{\overline{t}} dt \chi(t)\int_ {\R^n} d\vec{y} J(y) G(y-x).
\]
Hence (\ref{Jsharp}) follows. Next, since $\left(J|_{t}\right)\sharp_tG\in \mathcal{E}^{s+1}_0$ we deduce from Lemma \ref{Ldeltapsi} that $\int_tJ\phi^\td$ defined by (\ref{Jphi}) is a bounded operator from $\F^{(1)}$ to $\F$ and that relation (\ref{Jsharp}) implies (\ref{delta/deltaphi}).
\end{proof}

\noindent Let us focus now on the definition of $\D(t) = -\int_tW(\phi^\tx,d\phi^\tx)\phi^\td$. We will actually consider a more general situation and consider, for $s\in\R$ an analytical function $\mathcal{V}$
\[
\mathcal{V}:\Omega\subset H^{s+1}(\R^n)\times H^s(\R^n)\longrightarrow H^s(\R^n)
\]
on a neighbourhood $\Omega\subset H^{s+1}(\R^n)\times H^s(\R^n)$ of the origin. More precisely we assume that there exists a family $(\mathcal{V}_p)_{p\in
\N}$ of continuous linear maps $\mathcal{V}_p: \bigotimes^p
\left(H^{s+1}(\R^n)\times H^s(\R^n)\right) \longmapsto H^s(\R^n)$.
We suppose that each $\mathcal{V}_p$ is symmetric, i.e. satisfies
(\ref{symmetric}). We denote by $\lc\mathcal{V}_p\rf$ the usual norm of $\mathcal{V}_p$,
i.e. the optimal constant in the inequality $||\mathcal{V}_p(\Phi_1\otimes\cdots \otimes \Phi_p)||_{H^s} \leq \lc\mathcal{V}_p\rf ||\Phi_1||_{H^{s+1}\times H^s}\cdots||\Phi_p||_{H^{s+1}\times H^s}$ (where, for $\Phi = (\phi_0,\phi_1)\in H^{s+1}(\R^n)\times H^s(\R^n)$, $||\Phi||_{H^{s+1}\times H^s}:= \left( ||\phi_0||_{H^{s+1}}^2 + ||\phi_1||_{H^s}^2\right)^{1/2}$). We suppose further that the power series $\lc \mathcal{V}\rf $ defined
by
\[
\lc \mathcal{V}\rf (z):=\sum_{q\ge 0}\lc\mathcal{V}_q\rf z^q
\]
has a positive radius of convergence $r_0>0$. Then $\forall \Phi\in H^{s+1}(\R^n)\times H^s(\R^n)$ such that $||\Phi||_{H^{s+1}\times H^s}<r_0$ the power series  $\mathcal{V}(\Phi):=\sum_{p= 0}^\infty \mathcal{V}_p(\Phi^{\otimes p})$ converges with respect to the $H^s(\R^n)$ topology and we have
\[
    ||{\cal V}(\Phi)||_{H^s}\le \lc\mathcal{V}\rf \left(||\Phi||_{H^{s+1}\times H^s}\right).
\]
\begin{rema}\label{remark2.1}
For $s>n/2$, since $H^s(\R^n)$ is a Banach algebra (see  Lemma \ref{lemmapp2}), the previous hypothesis is satisfied
by any functional $\mathcal{V}$ such that there exists a polynomial function or an analytic function $W:\omega\subset\R^{n+2}\longrightarrow\R$ on an open neighborhood of $0$ in $\R^{n+1}$ such that $\forall \Phi = (\phi_0,\phi_1)\in H^{s+1}(\R^n)\times H^s(\R^n)$, $\mathcal{V}(\Phi) = W(\phi_0,{\partial \phi_0\over \partial x^1},\cdots , {\partial \phi_0\over \partial x^n}, \phi_1)$.
\end{rema}

\noindent Given such a functional $\mathcal{V}$, for all $t\in\R$ and $\varphi\in {\cal E}^{s+1}_0$ such that $||\varphi||_{{\cal E}^{s+1}_0}<r_0$ the function $\mathcal{V}([\varphi]_t)$ is well defined and belongs to $H^s(\R^n)$. So by using Lemma \ref{lemmapp3} we deduce that $\mathcal{V}([\varphi]_t)\sharp_t G$ belongs to ${\cal E}^{s+1}_0$ and we can define
\[
\int_t \mathcal{V}(\source)\but := {\delta \over \delta (\mathcal{V}(\source)\sharp_t
G)},
\]
which means that, for all for all $r\in(0,r_0)$ and $|f\rangle\in \F_r$, $\left(\int_t \mathcal{V}(\source)\but\right)|f\rangle$ is defined by
\[
\forall \varphi\in B_{{\cal E}^{s+1}_0}(0,r), \quad
\left(\int_t \mathcal{V}(\source)\but\right)|f\rangle(\varphi) = {\delta |f\rangle \over \delta\left(\mathcal{V}([\varphi]_t)\sharp_t G\right)}(\varphi).
\]
\begin{lemm}
The operator $\int_t \mathcal{V}(\source)\but$ is a continuous operator from $\F^{(1)}_{(0,r_0)}$ to $\F_{(0,r_0)}$ and its norm is controlled by $\lc \mathcal{V}\rf (r)$
i.e.
\begin{equation}\label{estop3}
    \forall r\in(0,r_0), \quad \forall |f\rangle\in\F^{(1)}_r, \quad
    N_r\left(\int_t \mathcal{V}(\source)\but|f\rangle\right)\le
    \lc \mathcal{V}\rf (r) N_r^{(1)}(|f\rangle).
\end{equation}
\end{lemm}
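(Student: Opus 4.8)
The plan is to expand $\bigl(\int_t \mathcal{V}(\source)\but\bigr)|f\rangle$ as a power series in $\varphi$, to identify each of its homogeneous multilinear components explicitly, and then to estimate those components one by one. The reason for proceeding this way — rather than bounding the operator through the supremum of the associated polynomial functional — is that reconstructing a symmetric $m$-linear form out of a homogeneous polynomial of degree $m$ by polarization costs a factor $m^m/m!$, which would be fatal to the convergence of the norm $N_r$.

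First I would fix $r\in(0,r_0)$ and $|f\rangle=\sum_{p\ge 0}|f_p\rangle\in\F_r^{(1)}$ and analyse the direction $\psi_\varphi:=\mathcal{V}([\varphi]_t)\sharp_t G$ along which $|f\rangle$ is differentiated. Since $[\varphi]_t$ depends linearly on $\varphi$ with $||[\varphi]_t||_{H^{s+1}\times H^s}=||\varphi||_{\mathcal{E}_0^{s+1}}$, and since by Lemma \ref{lemmapp3} (see \refeq{ulsharpGk}) the map $w\longmapsto w\sharp_t G$ satisfies $||w\sharp_t G||_{\mathcal{E}_0^{s+1}}=||w||_{H^s(\R^n)}$, each composite
\[
\Psi_q:(\varphi_1,\dots,\varphi_q)\longmapsto \mathcal{V}_q\bigl([\varphi_1]_t\otimes\cdots\otimes[\varphi_q]_t\bigr)\sharp_t G
\]
is a continuous symmetric $q$-linear map from $(\mathcal{E}_0^{s+1})^{\otimes q}$ to $\mathcal{E}_0^{s+1}$ with $\lc\Psi_q\rf\le\lc\mathcal{V}_q\rf$, and $\psi_\varphi=\sum_{q\ge 0}\Psi_q(\varphi^{\otimes q})$ converges in $\mathcal{E}_0^{s+1}$ for $||\varphi||_{\mathcal{E}_0^{s+1}}<r_0$.

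Next I would invoke the series formula for $\delta/\delta\psi$ supplied by Lemma \ref{Ldeltapsi} with $\psi=\psi_\varphi$, pull the norm-convergent sum over $q$ through the continuous forms $|f_p\rangle$, and thus write, for $||\varphi||_{\mathcal{E}_0^{s+1}}<r$,
\[
\Bigl(\int_t \mathcal{V}(\source)\but\Bigr)|f\rangle(\varphi)=\sum_{p\ge 1}\ \sum_{q\ge 0}\ p\,|f_p\rangle\bigl(\Psi_q(\varphi^{\otimes q})\otimes\varphi^{\otimes p-1}\bigr).
\]
By \refeq{Fcontinuous} each summand is at most $p\lc f_p\rf\lc\mathcal{V}_q\rf\,||\varphi||_{\mathcal{E}_0^{s+1}}^{p+q-1}$, so the double series is dominated by $\bigl(\sum_{p\ge 1} p\lc f_p\rf r^{p-1}\bigr)\bigl(\sum_{q\ge 0}\lc\mathcal{V}_q\rf r^q\bigr)=N_r^{(1)}(|f\rangle)\,\lc\mathcal{V}\rf(r)$; hence it converges absolutely and may be regrouped according to the total degree $m=p+q-1$. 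Writing $|g\rangle:=\bigl(\int_t\mathcal{V}(\source)\but\bigr)|f\rangle$, its degree-$m$ part will be the symmetrization $|g_m\rangle$ of the $m$-linear form $(\varphi_1,\dots,\varphi_m)\mapsto\sum_{p+q=m+1}p\,|f_p\rangle\bigl(\Psi_q(\varphi_1\otimes\cdots\otimes\varphi_q)\otimes\varphi_{q+1}\otimes\cdots\otimes\varphi_m\bigr)$, which is continuous with $\lc g_m\rf\le\sum_{p+q=m+1}p\lc f_p\rf\lc\mathcal{V}_q\rf$; summing over $m$ then gives $N_r(|g\rangle)=\sum_{m\ge 0}\lc g_m\rf r^m\le\lc\mathcal{V}\rf(r)\,N_r^{(1)}(|f\rangle)$, which is \refeq{estop3}. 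To conclude I would observe that this defining series is independent of $r$, so the family $\bigl(\int_t\mathcal{V}(\source)\but\bigr)_{0<r<r_0}$ is compatible with the restrictions $\F_{r'}^{(1)}\subset\F_r^{(1)}$ (for $r<r'$) and is linear in $|f\rangle$, and that $\lc\mathcal{V}\rf$ is locally bounded on $(0,r_0)$; this is exactly what the definition of a continuous operator demands.

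The main obstacle — and the only step needing genuine care — is this last construction of the component $|g_m\rangle$: one must build the symmetric $m$-linear form directly by splitting the degree as $m=q+(p-1)$, with the first $q$ arguments feeding $\Psi_q$ and the remaining $p-1$ feeding $|f_p\rangle$ alongside the output of $\Psi_q$, so as to obtain the estimate $\lc g_m\rf\le\sum_{p+q=m+1}p\lc f_p\rf\lc\mathcal{V}_q\rf$ without ever passing through $\sup_{||\varphi||\le r}\bigl|\,|g_m\rangle(\varphi^{\otimes m})\bigr|$. Everything else (the norm identity for $w\sharp_t G$ from Lemma \ref{lemmapp3}, the use of Lemma \ref{Ldeltapsi}, the absolute convergence of the double series, and the compatibility across the scales $r$) is routine given the results already established.
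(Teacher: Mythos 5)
Your proof is correct and follows essentially the same route as the paper: expand the action of the operator as a double series indexed by $(p,q)$, bound each term using the isometry $||w\sharp_t G||_{\mathcal{E}^{s+1}} = ||w||_{H^s}$ from Lemma \ref{lemmapp3} to extract the factors $\lc\mathcal{V}_q\rf$, and regroup by total degree $k=p+q-1$ before summing against $r^k$. The only differences are minor: the paper first treats $|f\rangle\in\F_{pol}$ and concludes by density, whereas you work directly on $\F_r^{(1)}$ and construct the symmetric $m$-linear components explicitly, which in fact handles the passage from the diagonal estimate to the bound on $\lc g_m\rf$ more carefully than the paper's write-up.
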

\begin{proof}
    Consider $r\in(0,r_0)$ and let $|f\rangle \in \F_{pol}$ and write $|f\rangle(\varphi) = \sum_{p=1}^N|f_p\rangle(\varphi^{\otimes p})$ and $|g\rangle:= \int_t\mathcal{V}(\phi^\tx)\phi^\td|f\rangle$. Then, $\forall \varphi\in \mathcal{E}_0^{s+1}$ such that $||\varphi||_{{\cal E}^{s+1}_0}<r$ we know that ${\cal V}([\varphi]_t)$ is well defined and
    \[
    \begin{array}{ccl}
    |g\rangle(\varphi) & = & \displaystyle {\delta |f\rangle\over \delta (\mathcal{V}([\varphi]_t\sharp_tG)}(\varphi) = \sum_{p=1}^N p|f_p\rangle\left( (\mathcal{V}([\varphi]_t)\sharp_tG)\otimes \varphi^{\otimes p-1}\right) \\
     & = & \displaystyle \sum_{p=1}^N \sum_{q=0}^\infty p|f_p\rangle\left( (\mathcal{V}_q([\varphi]_t^{\otimes q})\sharp_tG)\otimes \varphi^{\otimes p-1}\right) = \sum_{k=0}^\infty g_k(\varphi^{\otimes k}),
    \end{array}
    \]
    where we have set $k=q+p-1$ and
    \[
    g_k(\varphi^{\otimes k}):= \sum_{p=1}^{\sup (N,k+1)} p|f_p\rangle\left( \left(\mathcal{V}_{k-p+1}([\varphi]_t^{\otimes k-p+1})\sharp_tG\right)\otimes \varphi^{\otimes p-1}\right).
    \]
    However
    \begin{equation}\label{estop4}
    |g_k(\varphi^{\otimes k})|\leq \sum_{p=1}^{\sup (N,k+1)} p \lc f_p\rf  ||\mathcal{V}_{k-p+1}([\varphi]_t^{\otimes k-p+1})\sharp_t G||_{\mathcal{E}^{s+1}} ||\varphi||_{\mathcal{E}^{s+1}}^{p-1}
    \end{equation}
    and since, by using Lemma \ref{lemmapp3},
    \[
    \begin{array}{ccl}
    ||\mathcal{V}_{k-p+1}([\varphi]_t^{\otimes k-p+1})\sharp_t G||_{\mathcal{E}^{s+1}} & \leq & \displaystyle  ||\mathcal{V}_{k-p+1}([\varphi]_t^{\otimes k-p+1})||_{H^s}\\
    & \leq & \displaystyle   \lc\mathcal{V}_{k-p+1}\rf ||[\varphi]_t||_{H^{s+1}\times H^s}^{k-p+1}\\
     & = & \displaystyle  \lc\mathcal{V}_{k-p+1}\rf ||\varphi||_{\mathcal{E}^{s+1}}^{k-p+1},
     \end{array}
    \]
    we deduce from (\ref{estop4}) that
    \[
    |g_k(\varphi^{\otimes k})|\leq \sum_{p=1}^{\sup (N,k+1)} p \lc f_p\rf \lc\mathcal{V}_{k-p+1}\rf ||\varphi||_{\mathcal{E}^{s+1}}^k.
    \]
    Hence, by posing $q=k-p+1$,
    \[
    \begin{array}{ccl}
    N_r(|g\rangle) & = & \displaystyle \sum_{k=0}^\infty \lc g_k\rf r^k \leq \sum_{k=0}^\infty \sum_{p=1}^{\sup (N,k+1)} p \lc f_p\rf \lc\mathcal{V}_{k-p+1}\rf r^k\\
     & = & \displaystyle \sum_{q=0}^\infty \sum_{p=1}^N p\lc f_p\rf \lc\mathcal{V}_q\rf r^qr^{p-1} = \displaystyle  \lc \mathcal{V}\rf (r) N_r^{(1)}(|f\rangle).
    \end{array}
    \]
    Thus we obtain (\ref{estop3}) for $|f\rangle\in \F_{pol}$. It implies the result by using the density of $\F_{pol}$ in $\F_r^{(1)}$.
\end{proof}

\section{Time ordered and ordinary exponentials of operators}\label{time_ordered_exponential_of_operators} 
The goal of this section is, given $t>0$, to define rigorously the time ordered exponential $T\hbox{exp}\left(\int_0^tds\D(s)\right)$ for a family $(\D(s))_{s\in [0,t]}$ of continuous operators $\D(s)$ from $\F^{(1)}_{(0,r_0)}$ to $\F_{(0,r_0)}$. Our motivation is to use later on this construction with $\D(s) = - \int_s \mathcal{V}\left(\phi^\tx, {\partial \phi^\tx\over \partial t}\right)\phi^\td$. We introduce the following notation:
\[
\hbox{for all }t\ge 0,\quad
\intit_0^t ds_1\cdots ds_k:=\int_{0>s_1>\cdots >s_k>t} ds_1 \cdots ds_k
\]
and we write
\begin{equation}\label{defTexp}
T \exp\left( \int_0^t d s \D(s)\right)
:= \sum_{k\ge 0}\intit_0^t ds_1\cdots ds_k \D(s_1) \D(s_2) \cdots \D(s_k).
\end{equation}

\subsection{The main result}\label{sub:the_main_result} 
\begin{theo}\label{theo.produit.ordonne.general}\sl{
Let $(\D(s))_{s\in [0,t]}$ be a family of continuous operators from $\F^{(1)}_{(0,r_0)}$ to $\F_{(0,r_0)}$ such that for any $r\in (0,r_0)$ and any $|f\rangle\in \F^{(1)}_r$, $[s\longmapsto \D(s) |f\rangle]$ belongs to $L^1([0,t],\F_r)$. Assume that there exists an analytic function $X(z)$ on the disc $B_\C(0,r_0)$ of the complex plane which satisfies ${d^k X \over dz^k}|_{z=0}\ge 0$ for all  $k\in \N$ and which controls the norm of $\D(s)$ for all $s\in[0,t]$. In other words assume that $\forall r\in(0,r_0)$,
\begin{equation}\label{xn1}
\forall s\in[0,t],\ \forall |f\rangle \in {\F_r^{(1)}},\quad
N_r(\D(s) |f\rangle)\le X(r)N_r^{(1)}(|f\rangle),
\end{equation}
Let $(t,z)\longmapsto e^{-tX}(z) := \gamma(t,z)$ be the solution of
\[
\left\{ \begin{array}{ccl}
\displaystyle{\partial \gamma\over \partial t}(t,z) & = & \displaystyle{-X(\gamma(t,z))}\\
\gamma(0,z) & = & z.
\end{array}\right.
\]
Then for all $t\ge 0$ and $r\in(0,r_0)$ such that $e^{-tX}(r)>0$, the operator $T \exp\left( \int_0^t d s \D(s)\right)$
defined by (\ref{defTexp}) is a bounded operator from $\F_r$ to $\F_{e^{-tX}(r)}$ with norm
less than $1$ i.e.
\begin{equation}\label{maininequa}
\forall |f\rangle\in  \F_r, \quad N_{e^{-tX}(r)}\left\{ T
\exp\left( \int_0^t d s \D(s)\right)|f\rangle\right\} \leq
N_r(|f\rangle).
\end{equation}
Moreover the map $[0,t]\longrightarrow \F_{e^{-tX}(r)}$,
$\tau\longmapsto T \exp\left( \int_0^\tau d s \D(s)\right)$ is
continuous. }
\end{theo}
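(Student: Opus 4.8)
\emph{Strategy.} The plan is to expand $T\exp$ through \refeq{defTexp}, bound each term $T_k(t):=\intit_0^t ds_1\cdots ds_k\,\D(s_1)\cdots\D(s_k)|f\rangle$ in $\F_{e^{-tX}(r)}$, and sum. The organising idea is that to each $|g\rangle$ one attaches its \emph{norm profile} $z\mapsto N_z(|g\rangle)=\sum_{p\ge0}\lc g_p\rf z^p$, a power series with nonnegative coefficients, and that \refeq{xn1} together with the identity $N_z^{(1)}(|g\rangle)=\frac{d}{dz}N_z(|g\rangle)$ says precisely that $\D(s)$ acts on norm profiles in a way dominated by the first order operator $X(z)\frac{d}{dz}$. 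Hence $T\exp(\int_0^t\D)$ should be dominated by $\exp\!\big(t\,X(z)\tfrac{d}{dz}\big)$, whose effect on a function $\nu$ is $\nu\mapsto\nu\circ e^{tX}$, where $e^{tX}$ denotes the time-$t$ flow of $\dot z=X(z)$, i.e. the inverse of $z\mapsto e^{-tX}(z)$. Evaluating at $z=e^{-tX}(r)$ gives $\nu\big(e^{tX}(e^{-tX}(r))\big)=\nu(r)=N_r(|f\rangle)$, which is exactly \refeq{maininequa}; this is why the characteristic flow $\gamma=e^{-\sigma X}$ and the ODE defining it appear in the statement.

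\emph{The estimate.} Set $\rho(\sigma):=e^{-\sigma X}(r)=\gamma(\sigma,r)$; since $X$ has nonnegative Taylor coefficients, $X\ge0$ on $[0,r_0)$, so $\sigma\mapsto\rho(\sigma)$ is $C^1$ and non-increasing on $[0,t]$, with $\rho(0)=r$ and $\rho(t)>0$ by hypothesis. One first checks that for every $\varrho<r$ the iterated integral defining $T_k(\sigma)$ is a legitimate element of $\F_\varrho$: indeed $|f\rangle\in\F_r\subset\F_\varrho^{(1)}$, $\D(s):\F_\varrho^{(1)}\to\F_\varrho$, and $\F_\varrho\subset\F_{\varrho'}^{(1)}$ for $\varrho'<\varrho$, while the $L^1$-in-$s$ hypothesis makes the Bochner integrals meaningful. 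The core is an induction on $k$ establishing
\[
\forall\,\sigma\in[0,t],\ \forall\,\varrho<\rho(\sigma):\qquad
N_\varrho\big(T_k(\sigma)\big)\ \le\ \frac{\sigma^k}{k!}\,\Xi_k(\varrho),\qquad
\Xi_k:=\Big(X\tfrac{d}{dz}\Big)^{\!k} N_\cdot(|f\rangle),
\]
each $\Xi_k$ being again a power series with nonnegative coefficients, analytic on $[0,r)$. The base case $k=0$ is the definition of $N_\varrho(|f\rangle)$. For the step one uses $T_k(\sigma)=\int_0^\sigma\D(\tau)T_{k-1}(\tau)\,d\tau$, subadditivity of $N_\varrho$, then \refeq{xn1}, $N_\varrho(\D(\tau)T_{k-1}(\tau))\le X(\varrho)\,N_\varrho^{(1)}(T_{k-1}(\tau))$, together with the estimate $N_\varrho^{(1)}(T_{k-1}(\tau))\le\frac{\tau^{k-1}}{(k-1)!}\Xi_{k-1}'(\varrho)$ (which is a coefficientwise refinement of the bound at level $k-1$, using $N_\varrho^{(1)}=\frac{d}{d\varrho}N_\varrho$ and nonnegativity of all coefficients), and then integrates in $\tau$. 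Summing over $k$ and using $\sum_{k\ge0}\frac{\sigma^k}{k!}\Xi_k(\varrho)=N_{e^{\sigma X}(\varrho)}(|f\rangle)$ gives, for $\varrho<\rho(\sigma)$, that $\sum_kT_k(\sigma)$ converges absolutely in $\F_\varrho$ with $N_\varrho\big(T\exp(\int_0^\sigma\D)|f\rangle\big)\le N_{e^{\sigma X}(\varrho)}(|f\rangle)$; letting $\varrho\uparrow\rho(\sigma)$ and using monotone convergence in $\sum_p\lc\cdot\rf\varrho^p$ yields $N_{\rho(\sigma)}\big(T\exp(\int_0^\sigma\D)|f\rangle\big)\le N_r(|f\rangle)$, i.e. \refeq{maininequa}. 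Equivalently one may package this as: $\phi(\sigma,z):=N_z\big(T\exp(\int_0^\sigma\D)|f\rangle\big)$ satisfies $\partial_\sigma^+\phi\le X(z)\,\partial_z\phi$ (upper Dini derivative, from $\frac{d}{d\sigma}T\exp=\D(\sigma)\,T\exp$ and subadditivity of the norm), hence is non-increasing along the characteristics $z=\gamma(\sigma,\cdot)$, and the characteristic through $(0,r)$ is $\sigma\mapsto(\sigma,\rho(\sigma))$.

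\emph{The main obstacle.} It is the loss of one derivative: \refeq{xn1} controls $N_\varrho(\D(\tau)|g\rangle)$ only by $N_\varrho^{(1)}(|g\rangle)$, so a careless iteration would shrink the radius at each of the $k$ stages, and replacing $N_\varrho^{(1)}(|g\rangle)$ by the crude Cauchy bound $N_{\varrho'}(|g\rangle)/(\varrho'-\varrho)$ is too lossy — the resulting simplex integral diverges logarithmically and does not even reach the time permitted by $e^{-tX}(r)>0$, let alone yield the sharp constant $1$. The remedy is exactly to propagate not the value of the $(k-1)$-st norm profile at one radius but its \emph{entire} Taylor expansion, i.e. the coefficientwise domination $T_{k-1}(\tau)\preceq\frac{\tau^{k-1}}{(k-1)!}\Xi_{k-1}$, which is what lets one pass from the bound to the bound on $N_\varrho^{(1)}$ needed at the next stage. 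This rests on the facts that the norm profiles really are power series with nonnegative coefficients (so $N^{(1)}=\partial_\varrho N$ is an exact identity) and that $X$ has nonnegative Taylor coefficients (so $X\,\Xi_{k-1}'$ is again such a series); for the operators Theorem \ref{mainthm} is applied to, namely $\D(s)=-\int_s\mathcal{V}(\phi^\tx,\nabla\phi^\tx)\phi^\td$, this coefficientwise control is already contained in the estimate behind the boundedness of $\int_t\mathcal{V}(\source)\but$ proved above. Making this bookkeeping of radii and coefficients close with the \emph{sharp} radius $e^{-tX}(r)$ and constant $1$ is the delicate point; everything else is routine.

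\emph{Continuity.} Finally, $\tau\mapsto T\exp(\int_0^\tau ds\,\D(s))$ is continuous as a map $[0,t]\longrightarrow\mathcal{L}\big(\F_r,\F_{e^{-tX}(r)}\big)$ (note $\F_{e^{-\tau X}(r)}\subset\F_{e^{-tX}(r)}$ for $\tau\le t$): each $T_k(\tau)$ depends continuously on $\tau$ with values in $\F_{e^{-tX}(r)}$, since the integral over the moving simplex $\{\tau>s_1>\cdots>s_k>0\}$ varies continuously with $\tau$ by the $L^1$ hypothesis and dominated convergence, and by the estimate above the tails $\sum_{k\ge K}N_{e^{-tX}(r)}(T_k(\tau))\le\sum_{k\ge K}\frac{t^k}{k!}\Xi_k(e^{-tX}(r))$ are bounded uniformly in $\tau\in[0,t]$, so the series converges uniformly on $[0,t]$ and its sum is continuous.
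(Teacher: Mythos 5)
Your proposal is correct and follows essentially the same route as the paper: your coefficientwise domination $T_k(\sigma)\preceq \frac{\sigma^k}{k!}\bigl(X\frac{d}{dz}\bigr)^k N_\cdot(|f\rangle)$ is exactly the paper's Lemma \ref{reiterer} and its Corollary (\ref{Nek}) (proved there by reducing to monomial functionals and monomial vector fields), and your summation step $\sum_{k\ge 0}\frac{\sigma^k}{k!}\Xi_k(\varrho)=N_{e^{\sigma X}(\varrho)}(|f\rangle)$ is the paper's identity (\ref{identity=}). The one point you assert rather than prove is the absolute convergence of that Lie series up to time $t$, which the paper justifies by extending the flow of $X$ to complex time and dominating it by the real flow (Lemma \ref{champdevecteur}, using the nonnegativity of the Taylor coefficients of $X$).
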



\begin{proof}(of theorem \ref{theo.produit.ordonne.general})\\
Since $\F_{pol}$ is dense in $\F_r$ it suffices to show the result of $|f\rangle\in \F_{pol}$, then the result for $|f\rangle\in \F_r$ will be a consequence of (\ref{maininequa}). The proof is divided in several steps which are proved in the following.\\
\noindent
\emph{Step 1} --- We estimate the norm in $\F_r$ of
\[
\intit^t_0 ds_1\cdots ds_k\ \D(s_1)\D(s_2)\cdots \D(s_k)|f\rangle,
\quad \hbox{if }|f\rangle\in \F_{pol}.
\]
\begin{lemm}\label{reiterer}\sl{
Let $r>0$ and $k\in \N^*$ and assume that, for each $a = 1,\cdots ,k$, there exists a linear operator $\D_a:\F_{pol}\longrightarrow \F_r$ and an analytic vector field on $B_\C(0,r_0)$ with $r_0>r$
\[
X_a = X_a(z){d\over dz}:= \sum_{p=0}^\infty X_{a,p}z^p{d\over dz}, \quad \hbox{ for }z\in \C
\]
(so the power series $\sum_{p=0}^\infty X_{a,p}z^p$ converges for all $z\in \C$ s.t. $|z|<r_0$), such that $X_{a,p} \geq 0$, $\forall p\in \N$ and
\begin{equation}\label{hypolemma2.3}
\forall |f\rangle\in \F_{pol},\quad N_r(\D_a|f\rangle) \leq  X_a\cdot N_z(|f\rangle)|_{z=r} = X_a(r)N_r^{(1)}(|f\rangle).
\end{equation}
Then
\begin{equation}\label{NDDF}
\forall |f\rangle\in \F_{pol},\quad
N_r(\D_1 \cdots \D_k  |f\rangle) \leq \left[X_1 \cdots  X_k\cdot  N_z(|f\rangle)\right]|_{z=r}.
\end{equation}
}
\end{lemm}

\begin{proof}(of lemma \ref{reiterer})\\
We first observe that, by linearity, it suffices to prove the (\ref{NDDF}) for an arbitrary monomial functional $|f\rangle\in\F_{pol}$ of degree $j\in \N$, i.e. a functional of the type
$|f_j\rangle(\varphi)= |f_j\rangle(\varphi^{\otimes j})$ where
$|f_j\rangle \in S(\bigotimes^j\mathcal{E}^{s+1}_0)^*$. Then
\[
\forall \varphi\in {\cal E}^{q+1}_0\hbox{ s.t. }||\varphi||_{{\cal E}^{s+1}_0}<r;\quad
\D_a |f_j\rangle(\varphi) = \sum_{p=0}^\infty (\D_a |f_j\rangle )_{j+p-1}(\varphi^{\otimes j+p-1}),
\]
where, $\forall p\in\N$, $(\D_a |f_j\rangle)_{j+p-1}\in S(\bigotimes^{j+p-1} \mathcal{E}^{s+1}_0)^*$ and (\ref{hypolemma2.3}) reads
\begin{equation}\label{jXF}
\lc (\D_a |f_j\rangle)_{j+p-1}\rf \leq jX_{a,p} \lc f_j\rf ,\quad
\hbox{for }X_{a,p} \geq 0.
\end{equation}
However, still by linearity, we can further reduce the proof of (\ref{jXF}) $\Longrightarrow$ (\ref{NDDF}) for $|f\rangle = |f_j\rangle$ to the case where, $\forall a = 1,\cdots ,k$, there exists some $p_a\in \N$ such that $X_a(z) = X_{a,p_a}z^{p_a}{d\over dz} = \lambda_az^{p_a}{d\over dz}$, with $\lambda_a\geq 0$, i.e. $\D_a |f_j\rangle: \varphi\longmapsto (\D_a |f_j\rangle)_{j+p_a-1}(\varphi^{\otimes j+p_a-1})$, with the estimate $\lc(\D_a |f_j\rangle)_{j+p_a-1}\rf \leq j\lambda_a \lc f_j\rf $. Then, by writing $p_a':= p_a-1$,
\[
\left(\D_1 \cdots \D_k  |f_j\rangle\right) (\varphi) = \left(\D_1 \cdots \D_k  |f_j\rangle\right)_{j+p_1'+ \cdots + p_k'}(\varphi^{\otimes j+p_1' + \cdots + p_k'}),
\]
for some $\left(\D_1 \cdots \D_k |f_j\rangle\right)_{j+p_1'+ \cdots + p_k'}\in S(\bigotimes^{j+p_1'+ \cdots + p_k'}\mathcal{E}^{s+1}_0)^*$. On the one hand one proves recursively that
\[
\begin{array}{ccl}
\lc\left(\D_1 \cdots \D_k |f_j\rangle\right)_{j+p_1'+ \cdots + p_k'}\rf & \leq & \lambda_1(j+p_2'+\cdots +p_k') \lc\left(\D_2\cdots \D_k |f_j\rangle\right)_{j+p_2'+ \cdots + p_k'}\rf\\
& \leq & \cdots \\
& \leq & \lambda_1\cdots \lambda_k (j+p_2'+\cdots +p_k')\cdots (j+p_k')j \lc f_j\rf .
\end{array}
\]
On the other hand we have also
\[
\begin{array}{ccl}
\left(\lambda_1z^{p_1}{d\over dz}\right) \cdots \left(\lambda_kz^{p_k}{d\over dz}\right) N_z(|f_j\rangle) & = & \lc f_j\rf \left(\lambda_1z^{p_1}{d\over dz}\right) \cdots \left(\lambda_kz^{p_k}{d\over dz}\right)z^j\\
& = & \lambda_1\cdots \lambda_k (j+p_2'+\cdots +p_k')\cdots (j+p_k')j z^{j+p_1'+\cdots +p_k'}.
\end{array}
\]
Hence Inequality (\ref{NDDF}) follows directly from a comparison of the two results.
\end{proof}

\noindent We consider the holomorphic vector field on the disc $B_\C(0,r_0)$ of the complex plane defined by $X(z){d \over dz}$ which we denote by $X$. Then condition \refeq{xn1} of the theorem reads
\[
\forall s\in[0,t],\ \forall |f\rangle \in {\F_r^{(1)}},\quad
N_r(\D(s) |f\rangle)\le X\cdot N_z(|f\rangle)|_{z=r}.
\]
We can now apply the previous lemma to operator $\D(s)$ and, by using $\intit_0^t ds_1\cdots ds_k={t^k \over k!}$, we complete our first step with the:
\begin{coro} Assume that the hypotheses of theorem \ref{theo.produit.ordonne.general} are satisfied. Then $\forall |f\rangle \in \F_{pol}$,
\begin{equation}\label{Nek}
\forall r>0,\quad N_r\left( \intit^t_0 ds_1\cdots ds_k\ \D(s_1)\D(s_2)\cdots \D(s_k)|f\rangle \right)\le {t^k \over k!}X^k\cdot N_z(|f\rangle)|_{z=r}.
\end{equation}
\end{coro}
\begin{proof}
    Indeed Inequality (\ref{Nek}) is a straightforward consequence of Lemma \ref{reiterer} with $\D_a = \D(s_a)$ and $X_a = X$. (Note that (\ref{Nek}) can then extended to $|f\rangle\in \F_r^{(k)}$ by density of $\F_{pol}$ in $\F_r^{(k)}$.)
\end{proof}

\noindent
\emph{Step 2} --- We prove some results on the vector field
$X$ on the complex plane. In the following, for $T,R>0$, we denote by $\overline{B}(0,T):= \{\tau\in \C|\ |\tau|\leq T\}$ and
$\overline{B}(0,R):= \{z\in \C|\ |z|\leq R\}$.

\begin{lemm}\label{champdevecteur}\sl{
Let $X:B_\C(0,r_0)\longmapsto \C$ be an holomorphic vector field different from 0. Assume that
\[
X(z) = \sum_{k=0}^\infty X_kz^k,\quad \hbox{where }X_k\geq 0, \forall k\in \N.
\]
Let $R\in(0,r_0)$ and $T>0$ such that $e^{TX}(R)$ exists. Then the flow map
\[
\begin{array}{ccc}
\overline{B}(0,T)\times \overline{B}(0,R) & \longrightarrow & \C\\
(\tau,z) & \longmapsto & e^{\tau X}(z)
\end{array}
\]
is defined on $\overline{B}(0,T)\times \overline{B}(0,R)$ and in particular
\begin{equation}\label{controleflot}
\forall (\tau,z)\in \overline{B}(0,T)\times \overline{B}(0,R),\quad
|e^{\tau X}(z)| \leq e^{|\tau|X}(|z|)\leq e^{TX}(R).
\end{equation}
}
\end{lemm}
\begin{proof}(of lemma \ref{champdevecteur})\\
We will first show that $(\tau,z) \longmapsto e^{\tau X}(z)$ is defined and satisfies (\ref{controleflot}) over $B(0,T)\times B(0,R)$, where $B(0,T):= \{\tau \in \C|\ |\tau|<T\}$ and $B(0,R):= \{z\in \C|\ |z|<R\}$. Fix some $z\in B(0,R)$ and $\tau\in B(0,T)$. Then $\exists \varepsilon_0>0$ s.t. $\forall \varepsilon\in (0,\varepsilon_0]$,
\[
|z| \leq R - \varepsilon \quad \hbox{and}\quad
|t|\leq T_\varepsilon:= {T\over 1+\varepsilon}.
\]
We also let $\lambda\in S^1\subset \C$ such that $\tau = |\tau|\lambda$, where $0< |\tau| \leq T_\varepsilon$. We introduce the notations:
\[
\left\{ \begin{array}{ccll}
f_\varepsilon(t) &:= & e^{t(1+\varepsilon)X}(|z|+\varepsilon) & \forall t\in [0,T_\varepsilon]\\
\gamma(t) &:= & e^{t\lambda X}(z) & \forall t\in [0, \overline{t}) \\
g(t) & := & |\gamma(t)| & \forall t\in [0, \overline{t})
\end{array}\right.
\]
where $\overline{t}$ is the positive maximal existence time for $\gamma$. Note that $f_\varepsilon$ is defined on $[0,T_\varepsilon]$ because of the assumption that $e^{TX}(R)$ exists. Our first task is to show that the set:
\[
A_\varepsilon:= \{t\in [0,T_\varepsilon]\cap [0,\overline{t}) |\ g(t) - f_\varepsilon(t) \geq 0\}
\]
is actually empty. Let us prove it by contradiction and assume that
$A_\varepsilon\neq \emptyset$. Then there exists $t_0:= \inf
A_\varepsilon$. Note that $g(0) - f_\varepsilon(0) = - \varepsilon
<0$, hence we deduce from the continuity of $g- f_\varepsilon$ that
$t_0\neq 0$ and $g(t_0) = f_\varepsilon(t_0)$. Moreover since
$f_\varepsilon(0) = \varepsilon$ and $f_\varepsilon$ is increasing
because $X(r)>0$ for $r>0$ we certainly have $g(t_0) =
f_\varepsilon(t_0) >0$. We now observe that
\[
\forall z\in \C^*,\quad {\langle \lambda X(z),z\rangle \over |z|} = \left\langle \lambda \sum_{k=0}^\infty X_kz^k,{z\over |z|}\right\rangle \leq \sum_{k=0}^\infty X_k |z|^k = X(|z|).
\]
Hence for all $t\geq 0$ s.t. $g(t)\neq 0$,
\[
g'(t) = {\langle \lambda X(\gamma(t)),\gamma(t)\rangle \over |\gamma(t)|} \leq X(|\gamma(t)|) = X(g(t))
\]
and hence in particular, since $g(t_0) \neq 0$,
\[
g'(t_0) \leq X(g(t_0)) = X(f_\varepsilon(t_0)) = {f_\varepsilon'(t_0) \over 1+\varepsilon} < f_\varepsilon'(t_0).
\]
Thus since $f'_\varepsilon - g' $ is continuous $\exists t_1\in (0,t_0)$ s.t. $\forall t\in [t_1,t_0]$, $f'_\varepsilon(t) - g'(t)\geq 0$. Integrating this inequality over $[t_1,t_0]$ we obtain
\[
g(t_1) - f_\varepsilon(t_1) = \left( f_\varepsilon(t_0) - g(t_0)\right) -  \left( f_\varepsilon(t_1) - g(t_1)\right) = \int_{t_1}^{t_0} \left(f'_\varepsilon(t) - g'(t)\right) dt \geq 0,
\]
i.e. $t_1\in A_\varepsilon$, a contradiction. \\

\noindent
Hence $A_\varepsilon = \emptyset$. Note that this implies automatically that $\overline{t}> T_\varepsilon$. Indeed if we had $\overline{t}\leq T_\varepsilon$ this would imply that $g$ is not bounded in $[0,\overline{t}) \subset [0,T_\varepsilon]$, but since $f_\varepsilon$ is bounded on $[0,T_\varepsilon]$ we could then find some time $t\in [0,\overline{t})$ s.t. $g(t)\geq f_\varepsilon(t)$, which would contradict the fact that $A_\varepsilon = \emptyset$. Thus we deduce that $\forall t\in [0,T_\varepsilon]$, $g(t) < f_\varepsilon(t)$, i.e.
\[
\forall t\in [0,T_\varepsilon],\quad |e^{\lambda tX}(z)| < e^{(1+\varepsilon)tX}(|z|+\varepsilon).
\]
In other words for all $\tau = \lambda t\in B(0,T)$ and all $z\in B(0,R)$ we found that $\forall \varepsilon\in (0,\varepsilon_0]$, $|e^{\tau X}(z)|\leq e^{(1+\varepsilon)|\tau|X}(|z|+\varepsilon)$. Letting $\varepsilon$ goes to 0, we deduce the estimate (\ref{controleflot}) for $(\tau,z)\in B(0,T)\times B(0,R)$. Lastly this estimate forbids the flow to blow up on $\overline{B}(0,T)\times \overline{B}(0,R)$. Hence the result and (\ref{controleflot}) can be extended to this domain by continuity.
\end{proof}

\noindent We now consider any $r\in (0,r_0)$ and we let $\theta(r)$ to be the maximal positive time of existence for
$t \longmapsto e^{-tX}(r)$. (We remark that, if $X(0) =0$, then $\theta(r) = +\infty$, $\forall r\geq 0$.)
Then we have obviously that $\forall t\in [0,\theta(r))$, $e^{tX}\left((e^{-tX}(r)\right)$ exists since it is nothing but $r$.
Hence we can apply Lemma \ref{champdevecteur} with $(T,R) = (t,e^{-tX}(r))$. It implies that, for any
holomorphic function $h$ on the closed disc $\overline{B}(0,r)$ of the complex plane, the map
\[
\begin{array}{ccc}
\overline{B}(0,t)\times \overline{B}(0,e^{-tX}(r)) & \longrightarrow & \C\\
(\tau,z) & \longmapsto & h\left( e^{\tau X}(z) \right)
\end{array}
\]
is well defined and is analytic. Hence the following expansion holds:
\begin{equation}\label{taylor}
\forall (\tau,z) \in \overline{B}(0,t)\times \overline{B}(0,e^{-tX}(r)),\quad
h\left( e^{\tau X}(z) \right) =
\sum_{k=0}^\infty {d^k\over (ds)^k} \left[ h\left( e^{s X}(z) \right)\right]|_{s=0} {\tau^k\over k!}.
\end{equation}
However because of the following identity
\[
{d^k\over (ds)^k} \left[ h\left( e^{sX}(z) \right)\right] = (X^k\cdot h)(e^{sX}(z))
\]
(which can be proved by recursion over $k$), we deduce from (\ref{taylor}) that
\[
\forall (\tau,z) \in \overline{B}(0,t)\times \overline{B}(0,e^{-tX}(r)),\quad
h\left( e^{\tau X}(z) \right) = \sum_{k=0}^\infty (X^k\cdot h)(z) {\tau^k\over k!}.
\]
By specializing this relation to $(\tau,z) = (t,e^{-tX}(r))$ we deduce that the power series $\sum_{k=0}^\infty {t^k\over k!}(X^k\cdot h)(e^{-tX}(r))$ is absolutely convergent and satisfies the identity
\begin{equation}\label{identity=}
h(r) = \sum_{k=0}^\infty {t^k\over k!}(X^k\cdot h)(e^{-tX}(r)).
\end{equation}

\noindent
\emph{Step 3} --- We complete the proof of the Theorem.
We prove the estimate (\ref{maininequa}). By first applying (\ref{Nek}) for some $|f\rangle\in \F_{pol}$, we have
\[
\sum_{k=0}^\infty N_{e^{-tX}(r)}\left[ \intit^t_0 ds_1\cdots ds_k\ \D(s_1)\D(s_2)\cdots \D(s_k)|f\rangle \right]
 \leq  \sum_{k=0}^\infty {t^k\over k!} X^k\cdot N_z(|f\rangle)|_{z=e^{-tX}(r)}.
\]
But using then (\ref{identity=}) with $h(z) = N_z(|f\rangle)$ we obtain
\[
\forall |f\rangle\in \F_{pol}, \quad
\sum_{k=0}^\infty {t^k\over k!} (X^k\cdot N_z(|f\rangle))|_{z=e^{-tX}(r)} = N_r(|f\rangle).
\]
Hence the series $T \hbox{exp}\left( \int_0^t ds\D(s)\right) |f\rangle$ converges in $\F_{e^{-tX}(r)}$ and we have the estimate
\[
\begin{array}{ccl}
\displaystyle N_{e^{-tX}(r)}\left[ T \hbox{exp}\left( \int_0^t ds\D(s)\right) |f\rangle \right] & \leq & \displaystyle  \sum_{k=0}^\infty  N_{e^{-tX}(r)}\left[ \intit^t_0 ds_1\cdots ds_k\ \D(s_1)\D(s_2)\cdots \D(s_k)|f\rangle \right]\\
& \leq & N_r(|f\rangle).
\end{array}
\]
So (\ref{maininequa}) follows by density of $\F_{pol}$ in $\F_r$. Let us prove the last part of the theorem. Let $\tau\in[0,t]$, then for all $s\in[0,t]$ we have
\[
T  e^{\int_0^\tau d\sigma \D(\sigma)}|f\rangle-
T  e^{\int_0^s d\sigma \D(\sigma)}|f\rangle
=\sum_{k\ge 0}\int_s^\tau d\sigma \D(\sigma) \intit_0^\sigma dt_1\cdots dt_k \D(t_1)\cdots \D(t_k)|f\rangle.
\]
Hence using the same computations as above we get
\[
\left|
T  e^{\int_0^\tau d\sigma \D(\sigma)}|f\rangle-
T  e^{\int_0^s d\sigma \D(\sigma)}|f\rangle
\right|
\le
|s-\tau| \sum_{k\ge 0}{t^k \over k!}X^{k+1}\cdot N_z(|f\rangle)|_{z=\gamma}
\]
which tends to $0$ when $s\to \tau$.
\end{proof}

As a direct consequence of theorem \ref{theo.produit.ordonne.general} a similar result holds for the ordinary exponential of a linear operator from $\F^{(1)}_{(0,r_0)}$ to $\F_{(0,r_0)}$ as follows.
\begin{coro}\label{thmbis}
Let $\T$ be a continuous operator from $\F^{(1)}_{(0,r_0)}$ to $\F_{(0,r_0)}$ with norm controlled by an holomorphic function $X$ on $B_\C(0,r_0)$ such that for all $k\ge 0$, ${d^kX \over dz^k}|_{z=0}\ge 0$. Then for all $r\in(0,r_0)$ and $\sigma >0$ such that $e^{-\sigma X}(r) >0$ the operator
\[
e^{\sigma \T}:= \sum_{k=0}^\infty {(\sigma \T)^k\over k!}: \F_r\longrightarrow \F_{e^{-\sigma X}(r)}
\]
is well-defined, continuous and satisfies
\[
\forall |f\rangle\in \F_r,\quad N_{e^{-\sigma X}(r)}\left(e^{\sigma \T}|f\rangle\right) \leq N_r(|f\rangle).
\]
\end{coro}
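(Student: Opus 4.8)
The plan is to obtain Corollary~\ref{thmbis} as a direct specialization of Theorem~\ref{theo.produit.ordonne.general} to a \emph{constant} (time-independent) family of operators. Concretely, I would fix $r\in(0,r_0)$ and $\sigma>0$ with $e^{-\sigma X}(r)>0$, and set $\D(s):=\T$ for every $s\in[0,\sigma]$. Since $\T$ is by hypothesis a continuous operator from $\F^{(1)}_{(0,r_0)}$ to $\F_{(0,r_0)}$, the family $(\D(s))_{s\in[0,\sigma]}$ meets the structural requirement of the theorem; and for any $r'\in(0,r_0)$ and $|f\rangle\in\F_{r'}^{(1)}$ the map $s\longmapsto\D(s)|f\rangle=\T|f\rangle$ is constant, hence trivially lies in $L^1([0,\sigma],\F_{r'})$. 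Finally, the holomorphic function $X$ on $B_\C(0,r_0)$ — which controls the norm of $\T$ and satisfies ${d^kX\over dz^k}|_{z=0}\ge 0$ for all $k$ — controls the norm of each $\D(s)$, so hypothesis \refeq{xn1} is verified with this very $X$.

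The only substantive observation is then the elementary identity $\D(s_1)\cdots\D(s_k)=\T^k$, independent of $s_1,\dots,s_k$, so that the $k$-th term of \refeq{defTexp} equals $\bigl(\intit_0^\sigma ds_1\cdots ds_k\bigr)\T^k$; using $\intit_0^\sigma ds_1\cdots ds_k={\sigma^k\over k!}$ (the simplex volume already recorded in the excerpt), this gives
\[
T\exp\left(\int_0^\sigma ds\,\D(s)\right)=\sum_{k\ge 0}{\sigma^k\over k!}\T^k=e^{\sigma\T}.
\]
Theorem~\ref{theo.produit.ordonne.general}, applied with $t=\sigma$, then yields simultaneously all the assertions of the Corollary: the series defining $e^{\sigma\T}$ converges in $\F_{e^{-\sigma X}(r)}$, it defines a bounded operator $\F_r\to\F_{e^{-\sigma X}(r)}$ of norm at most $1$, and $N_{e^{-\sigma X}(r)}\bigl(e^{\sigma\T}|f\rangle\bigr)\le N_r(|f\rangle)$ for every $|f\rangle\in\F_r$. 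The continuity of $e^{\sigma\T}$ as an operator between these Banach spaces is immediate from this norm bound.

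I do not expect any real obstacle here — this is precisely why the authors phrase it as a direct consequence. The only points deserving a line of care are bookkeeping: one should note that $X\ge 0$ on $[0,r_0)$ (all Taylor coefficients of $X$ at $0$ being nonnegative), so that $t\mapsto e^{-tX}(r)$ is nonincreasing, cannot blow up, and is therefore defined on all of $[0,\sigma]$ as soon as $e^{-\sigma X}(r)>0$; and one should check that the roles of the spaces $\F^{(1)}$ versus $\F$ and the sign condition on the Taylor coefficients of $X$ match the hypotheses of Theorem~\ref{theo.produit.ordonne.general} verbatim, which they do by construction.
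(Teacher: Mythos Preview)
Your proposal is correct and is essentially identical to the paper's own proof: the authors simply set $\D(s)=\T$ for all $s\in[0,\sigma]$ in Theorem~\ref{theo.produit.ordonne.general} and observe that the time-ordered exponential then reduces to the ordinary exponential because $\intit_0^\sigma ds_1\cdots ds_k=\sigma^k/k!$. Your additional remarks (the $L^1$ check, the monotonicity of $e^{-tX}(r)$) are harmless bookkeeping that the paper leaves implicit.
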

\begin{proof}
    Just take $\D(s)=\T$ for all $s\in[0,\sigma]$ in theorem \ref{theo.produit.ordonne.general}, then in this case the times order exponential reduces to the usual exponential since for all $k\in\N$ we have $\int_{0<s_1<\cdots<s_k<\sigma}ds_1\cdots ds_k={1 \over k!}\sigma^k$.
\end{proof}

\subsection{Applications of the main result}\label{sub:applications_of_the_main_result} 
\noindent Let $u$ belong to $\cap_{\ell=0,1}{\cal C}^\ell((\underline{t},\overline{t}),H^{s+1-l}(\R^n))$ then we can apply Corollary \ref{thmbis} to $\U(t):= \int_tu\gd \phi^\td$ and we obtain the:
\begin{coro}\label{coroexp}
Let $t\in(\underline{t},\overline{t})$ and  $\kappa:= ||u|_t||_{H^{s+1}}+||{\partial u \over \partial t}|_t||_{H^{s}} = ||[u]_t||_{s+1}$ then the following linear operator is well-defined and bounded
\[
e^{\U(t)}: \F_r \longrightarrow \F_{r-\kappa},\quad\hbox{ for }r>\kappa;
\]
and satisfies the estimate
\begin{equation}\label{estimexpU}
\forall |f\rangle\in \F_r,\quad
N_{r-\kappa}(e^{\U(t)}|f\rangle) \leq N_r(|f\rangle).
\end{equation}
Moreover, $\forall |f\rangle\in \F_r$, we have the identity: $\forall \varphi\in \mathcal{E}_0^{s+1}$ s.t. $||\varphi||_{{\cal E}^{s+1}_0}<r-\kappa$
\begin{equation}\label{demystifier}
\left(e^{\U(t)}|f\rangle\right)(\varphi) = |f\rangle \left( \varphi + u\sharpgdt G\right)
\end{equation}
\end{coro}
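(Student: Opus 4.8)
The statement splits into two essentially independent pieces: the boundedness of $e^{\U(t)}$ with the estimate \refeq{estimexpU}, and the translation identity \refeq{demystifier}. For the first piece, recall that we have already established that $\U(t)=\int_tu\gd\but$ is a continuous operator from $\F^{(1)}$ to $\F$ whose norm is controlled by the \emph{constant} $\kappa=\|[u]_t\|_{s+1}$. Hence I would apply Corollary \ref{thmbis} with $\T=\U(t)$ and with $X$ the constant holomorphic function $X(z)\equiv\kappa$, which trivially satisfies ${d^kX\over dz^k}|_{z=0}\ge 0$ for all $k$. The flow of $-X{d\over dz}$ is $\gamma(\sigma,z)=z-\sigma\kappa$, so $e^{-\sigma X}(r)=r-\sigma\kappa$; taking $\sigma=1$ gives $e^{-X}(r)=r-\kappa$, which is positive exactly when $r>\kappa$. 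Corollary \ref{thmbis} then yields at once that $e^{\U(t)}:\F_r\to\F_{r-\kappa}$ is well-defined and continuous and that $N_{r-\kappa}(e^{\U(t)}|f\rangle)\le N_r(|f\rangle)$.

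For the identity \refeq{demystifier}, set $\psi:=u\sharpgdt G\in\mathcal{E}_0^{s+1}$, so that $\U(t)=\delta/\delta\psi$ by \refeq{defop2} and $\|\psi\|_{\mathcal{E}_0^{s+1}}=\kappa$ by \refeq{ulsharpGk}. I would first prove the formula on the dense subspace $\F_{pol}$, where the powers $\U(t)^k$ are genuinely defined: a monomial of degree $p$ is sent by $\U(t)$ to a monomial of degree $p-1$, so $\F_{pol}\subset\F_r^{(k)}$ for every $k$. For a monomial $|f_p\rangle\in S(\bigotimes^p\mathcal{E}_0^{s+1})^*$, iterating the formula of Lemma \ref{Ldeltapsi} gives, for $\varphi\in B_{\mathcal{E}_0^{s+1}}(0,r)$,
\[
\U(t)^k|f_p\rangle(\varphi)={p!\over (p-k)!}\,|f_p\rangle\bigl(\psi^{\otimes k}\otimes\varphi^{\otimes p-k}\bigr)\quad(0\le k\le p),
\]
the right-hand side being understood as $0$ when $k>p$. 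Summing the (finite) exponential series and using the symmetry of $|f_p\rangle$ together with the binomial expansion of $(\varphi+\psi)^{\otimes p}$, one obtains $e^{\U(t)}|f_p\rangle(\varphi)=\sum_{k=0}^p\binom{p}{k}|f_p\rangle(\psi^{\otimes k}\otimes\varphi^{\otimes p-k})=|f_p\rangle(\varphi+\psi)$. By linearity this proves \refeq{demystifier} for every $|f\rangle\in\F_{pol}$.

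To pass to an arbitrary $|f\rangle=\sum_p|f_p\rangle\in\F_r$, I would use density: the partial sums $|f^{(N)}\rangle:=\sum_{p\le N}|f_p\rangle$ lie in $\F_{pol}$ and converge to $|f\rangle$ in $\F_r$, hence $e^{\U(t)}|f^{(N)}\rangle\to e^{\U(t)}|f\rangle$ in $\F_{r-\kappa}$ by the continuity obtained in the first paragraph. Since the evaluation map $|g\rangle\mapsto|g\rangle(\chi)$ is a continuous linear form on $\F_\rho$ as soon as $\|\chi\|_{\mathcal{E}_0^{s+1}}\le\rho$ (with operator norm $\le 1$), and since for $\|\varphi\|_{\mathcal{E}_0^{s+1}}<r-\kappa$ one has both $\|\varphi\|\le r-\kappa$ and $\|\varphi+\psi\|\le\|\varphi\|+\kappa<r$, I can let $N\to\infty$ in the identity $e^{\U(t)}|f^{(N)}\rangle(\varphi)=|f^{(N)}\rangle(\varphi+\psi)$ to conclude $(e^{\U(t)}|f\rangle)(\varphi)=|f\rangle(\varphi+\psi)$, which is \refeq{demystifier}.

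The only genuine content is the combinatorial fact that $\exp(\delta/\delta\psi)$ acts as translation by $\psi$, i.e. the binomial computation on monomials; everything else is an application of Corollary \ref{thmbis} and routine bookkeeping with the norms $N_r$. The one point that must be handled with care is that $\U(t)$ raises the regularity index — it maps $\F^{(1)}$ to $\F$, not $\F$ to itself — so the powers $\U(t)^k$ and the exponential series have to be manipulated on $\F_{pol}$ before the passage to the limit, which is precisely the situation Corollary \ref{thmbis} was designed to accommodate.
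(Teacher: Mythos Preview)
Your proof is correct and follows essentially the same route as the paper: apply Corollary~\ref{thmbis} with $\T=\U(t)$ and the constant field $X\equiv\kappa$ to obtain \eqref{estimexpU}, then verify the translation identity on monomials via the binomial expansion and symmetry of $|f_p\rangle$. The only cosmetic difference is in the passage from monomials to general $|f\rangle\in\F_r$: the paper sums the identities $e^{\U(t)}|f_p\rangle(\varphi)=|f_p\rangle\bigl((\varphi+\psi)^{\otimes p}\bigr)$ directly over $p$ (the right-hand side converges since $\|\varphi+\psi\|<r$), whereas you phrase this as a density argument with partial sums and continuity of $e^{\U(t)}$ --- these are two ways of saying the same thing.
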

In particular if we take $\varphi=0$ in \refeq{demystifier} we get
\begin{equation}\label{demystifier2}
    \forall r>\kappa;\quad \forall |f\rangle\in\F_{r};\quad \langle 0 | e^{\U(t)}|f\rangle = |f\rangle\left(u\sharpgdt G\right),
\end{equation}
which is exactly $\langle [u]_t|f\rangle$ since $u\sharpgdt G$ is the element $\varphi_{[u]_t}$ of ${\cal E}^{s+1}_0$ such that $[\varphi_{[u]_t}]_t=[u]_t$. Hence we finally get
\[
\langle 0|e^{\U(t)}=\langle [u]_t|\in \bigcap_{r>\kappa}(\F_r)^*.
\]

\begin{proof}
Let $\U$ denote temporarily the operator $\U(t)$ (here $t$ is fixed). The existence of $e^{\U}$ and the estimate (\ref{estimexpU}) follow from Theorem \ref{thmbis} with respectively $\T = \U$ and $X(r) = \kappa$ (because of (\ref{estop1})). To prove the identities (\ref{demystifier}) for $\left(e^{\U}|f\rangle\right)(\varphi)$ let us consider $|f\rangle=|f_p\rangle\in S(\bigotimes^p\mathcal{E}_0^{s+1})^*$, then using the definition of $\U$ we have $\forall\varphi\in{\cal E}^{s+1}_0$
\begin{equation}\label{pf.demystifier}
    e^{\U}|f_p\rangle(\varphi)=\sum_{k=0}^p {p(p-1)\cdots(p-k+1) \over k!} |f_p\rangle ((u\sharpgdt G)^{\otimes k}\otimes \varphi^{\otimes(p-k)})
\end{equation}
which is exactly $|f_p\rangle\left((\varphi + u\sharpgdt G)^{\otimes p}\right)$ since $|f_p\rangle$ is symmetric. Then let $r>\kappa$, if one takes $\varphi\in{\cal E}_0^{s+1}$ such that $||\varphi||_{{\cal E}^{s+1}_0}<r-\kappa$, then using properties of $u\sharpgdt G$ we have $||\varphi + u\sharpgdt G||_{{\cal E}_0^{s+1}}< r$. So if $|f\rangle=\sum_p |f_p\rangle$ belongs to $\F_r$ then thanks to \refeq{pf.demystifier} we have
\[
e^{\U}|f\rangle=\sum_{p\ge 0} e^{\U}|f_p\rangle = \sum_p |f_p\rangle \left((\varphi + u\sharpgdt G)^{\otimes p}\right)
\]
which leads to \refeq{demystifier} since $||\varphi + u\sharpgdt G||_{{\cal E}_0^{s+1}}< r$ and $|f\rangle\in\F_r$.
\end{proof}
As a byproduct of this section we apply Theorem \ref{theo.produit.ordonne.general} to the family of operator $\D(s):=-\int_s {\cal V}(\source)\but$, ${s\in[0,t]}$ with $X(z) :=\lc \mathcal{V}\rf (z):=\sum_{q\ge 0} \lc {\cal V}_q \rf z^q$. Let $(t,z)\longmapsto e^{-tX}(z)$ denotes the solution of
    \[
    \left\{ \begin{array}{ccl}
    \displaystyle{\partial \gamma\over \partial t}(t,z) & = & \displaystyle{-\lc {\cal V} \rf (\gamma(t,z))}\\
    \gamma(0,z) & = & z.
    \end{array}\right.
    \]
    Then for all $r\in(0,r_0)$ such that $e^{-tX}(r)>0$ the time ordered exponential
    \[
        T\exp\left(-\int_0^t ds \D(s)\right):\F_r\longrightarrow\F_{e^{-tX}(r)}
    \]
    is a bounded operator with norm less than $1$. \\

\noindent \textbf{Conclusion} --- Now we can inspect in which circumstances it is possible to make sense of formula (\ref{Ftbasic}), i.e. to define
\begin{equation}\label{Flagrande}
\mathcal{F}_t^\varphi[u]_t:= \langle [u]_t| T\hbox{exp}\left(\int_0^tds\D(s)\right) \int_0\phi^\tx\gd  \varphi|0\rangle,
\end{equation}
where we recall that $\langle[u]_t|=\langle 0|e^{\U(t)}$.
For all $r\in(0,r_0)$ such that $e^{-tX}(r)>0$, $T\hbox{exp}\int_0^tds\D(s)$ maps $\F_r$ to $\F_{e^{-tX}(r)}$ and, if furthermore $e^{-tX}(r)-\kappa>0$ (where $\kappa = ||[u]_t||_{s+1}$ as in Corollary \ref{coroexp}), $e^{\U(t)}$ maps $\F_{e^{-tX}(r)}$ to $\F_{e^{-tX}(r)-\kappa}$. Hence we deduce that
\begin{equation}\label{composedoperator}
\hbox{if }e^{-tX}(r)>\kappa,\quad
e^{\U(t)}\circ T\hbox{exp}\left(\int_0^tds\D(s)\right) \hbox{ maps continuously }\F_r\hbox{ to }\F_{e^{-tX}(r)-\kappa}.
\end{equation}
Now recall that $\int_0\phi^\tx\gd  \varphi|0\rangle\in \F_{pol}\subset\cap_{r>0}\F_r$. Thus its image by the operator in (\ref{composedoperator}) makes sense and belongs to $\cap_{r\in(0,r_0)}\F_{e^{-tX}(r)-\kappa}$ if there exists $r\in(0,r_0)$ such that $e^{-tX}(r)>\kappa$. Then $\mathcal{F}_t^\varphi[u]_t$ is obtained by evaluating this image on $\langle 0|$. Hence \textbf{a necessary and sufficient condition to define $\mathcal{F}_t^\varphi[u]_t$ through (\ref{Flagrande}) is is that there exists some $r\in(0,r_0)$ such that $e^{-tX}(r)>\kappa$, which, since $e^{tX}$ is monotone increasing on $(0,\infty)$, is equivalent to the condition:}
\begin{equation}\label{expKappaFini}
 e^{tX}\left(\kappa\right) = e^{tX}\left(||[u]_t||_{s+1}\right) <\infty.
\end{equation}

\section{Conserved quantities}\label{conserved_quantities} 
In this section we show that $\mathcal{F}_t^\varphi[u]_t$ given by \refeq{magic} does not depends on $t$ if $u$ satisfies $(\Box+m^2)u+{\cal V}(u,{\partial u \over \partial t})=0$ in the distribution sense. Actually we prove a more general result: assuming some condition on $u$ and $r$ we show that for any $|f\rangle\in\F_r$ the quantity
\[
    \langle [u]_t|T\hbox{exp}\left(\int_0^tds\D(s)\right)|f\rangle
\]
does not depend on time. We recover $\mathcal{F}_t^\varphi[u]_t$ by taking $|f\rangle=|f_\varphi\rangle\in\F_{pol}$ defined by \refeq{magic2}. Theorem \ref{mainthm} follows by choosing $s>n/2$ and $\mathcal{V}$ to be a local functional (see Remark \ref{remark2.1}).
\begin{theo}
    Consider $u\in\mathcal{C}^0((\underline{t},\overline{t}),H^{s+1}(\R^n))\cap \mathcal{C}^1((\underline{t},\overline{t}),H^s(\R^n))$ and suppose that
    \begin{equation}\label{condition.cons}
        \sup_{t\in(\underline{t},\overline{t})} e^{tX}\left(||[u]_t||_{s+1}\right)<r_0.
    \end{equation}
    Let $r\in(0,r_0)$ be such that $\sup_{t\in(\underline{t},\overline{t})} e^{tX}(||[u]_t||_{s+1})<r$, then for all $|f\rangle\in\F_{r}$ the quantity
    \begin{equation}\label{alpha(t)}
        \alpha(t):=\langle 0|\exp\left(\int_t u\gd\but\right)T\exp\left(-\int_0^t ds\int_s {\cal V}\left(\source,{\partial \source\over \partial t}\right)\but\right)|f\rangle
    \end{equation}
    is well defined. Moreover if $u$ satisfies $(\Box+m^2)u+{\cal V}(u,{\partial u \over \partial t})=0$ in the distribution sense, then $\alpha(t)$ does not depend on $t\in(\underline{t},\overline{t})$.
\end{theo}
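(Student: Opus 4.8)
(We only sketch the main steps.) Write $\kappa_t:=||[u]_t||_{s+1}$, keep the vector field $X=\lc\mathcal V\rf$ introduced at the end of Section \ref{time_ordered_exponential_of_operators}, and set $\D(s):=-\int_s\mathcal V(\source,\partial\source/\partial t)\but$. The plan is, first, to obtain well-definedness of $\alpha$ by composing the bounds already established for $T\exp(\int_0^t ds\,\D(s))$ and for $e^{\U(t)}$; then, assuming the equation for $u$, to rewrite $\alpha(t)$ as the value of an analytic functional on a moving point of $\mathcal{E}^{s+1}_0$ and to differentiate, whereupon the two resulting terms cancel by Lemma \ref{lemme.u} and the equation itself.

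\emph{Well-definedness.} Fix $r\in(0,r_0)$ with $\sup_{t\in(\underline t,\overline t)}e^{tX}(\kappa_t)<r$; this is possible by \refeq{condition.cons}. For $t\in[0,\overline t)$ the inequality $e^{tX}(\kappa_t)<r<r_0$ forces $\rho_t:=e^{-tX}(r)$ to be defined with $\rho_t>\kappa_t\ge 0$. By the application of Theorem \ref{theo.produit.ordonne.general} recorded at the end of Section \ref{time_ordered_exponential_of_operators}, $T\exp(\int_0^t ds\,\D(s))$ maps $\F_r$ boundedly into $\F_{\rho_t}$; since $\rho_t>\kappa_t$, Corollary \ref{coroexp} makes $e^{\U(t)}$ a bounded operator $\F_{\rho_t}\to\F_{\rho_t-\kappa_t}$, on which $\langle 0|$ is continuous. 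Hence $\alpha(t)$ in \refeq{alpha(t)} is a well-defined real number for every $|f\rangle\in\F_r$, and \refeq{demystifier2} applied to $|h(t)\rangle$ gives the rewriting
\begin{equation}\label{eq:pln1}
\alpha(t)=|h(t)\rangle\big(u\sharpgdt G\big),\qquad |h(t)\rangle:=T\exp\Big(\int_0^t ds\,\D(s)\Big)|f\rangle\in\F_{\rho_t}.
\end{equation}
(For $t\in(\underline t,0]$ one argues in the same way, reversing the orientation of the time-ordering.)

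\emph{Differentiation and cancellation.} Assume now that $u$ satisfies $(\square+m^2)u+\mathcal V(u,\partial u/\partial t)=0$ in the distribution sense and set $J:=\mathcal V(u,\partial u/\partial t)$, so that $\square u+m^2u=-J$ with $J\in\mathcal C^0((\underline t,\overline t),H^s(\R^n))$ (using $\kappa_t<r_0$, a consequence of \refeq{condition.cons}, and continuity of $\mathcal V$). Fix $t_0$ and choose $\rho\in(\kappa_{t_0},\rho_{t_0})$; since $t\mapsto\kappa_t$ and $t\mapsto\rho_t$ are continuous, on a neighbourhood $I\ni t_0$ we have $\kappa_t<\rho<\rho_t$, hence $|h(t)\rangle\in\F_{\rho_t}\subset\F^{(1)}_\rho$ and $\psi(t):=u\sharpgdt G\in B_{\mathcal{E}^{s+1}_0}(0,\rho)$ for $t\in I$. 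On $I$ one then has: $t\mapsto\psi(t)$ is $\mathcal C^1$ into $\mathcal{E}^{s+1}_0$ with $\psi'(t)=(J|_t)\sharp_tG$ (Lemma \ref{lemme.u}); $t\mapsto|h(t)\rangle$ is $\mathcal C^1$ into $\F_\rho$ with $\frac{d}{dt}|h(t)\rangle=\D(t)|h(t)\rangle$ (differentiate the series \refeq{defTexp} term by term, using the uniform estimates in the proof of Theorem \ref{theo.produit.ordonne.general}); and, for $|g\rangle\in\F^{(1)}_\rho$, the map $(\eta,\varphi)\mapsto\frac{\delta|g\rangle}{\delta\eta}(\varphi)$ is continuous and bounded on $\mathcal{E}^{s+1}_0\times B_{\mathcal{E}^{s+1}_0}(0,\rho)$ (Lemma \ref{Ldeltapsi}). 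By the chain rule applied to \refeq{eq:pln1}, $\alpha$ is $\mathcal C^1$ on $I$ with
\begin{equation}\label{eq:pln2}
\alpha'(t)=\big(\D(t)|h(t)\rangle\big)\big(\psi(t)\big)+\frac{\delta|h(t)\rangle}{\delta\big((J|_t)\sharp_tG\big)}\big(\psi(t)\big).
\end{equation}
But $\big(\D(t)|g\rangle\big)(\varphi)=-\frac{\delta|g\rangle}{\delta(\mathcal V([\varphi]_t)\sharp_tG)}(\varphi)$, and at $\varphi=\psi(t)=u\sharpgdt G$ one has $[\psi(t)]_t=[u]_t$, hence $\mathcal V([\psi(t)]_t)=\mathcal V([u]_t)=J|_t$; therefore the first term of \refeq{eq:pln2} is exactly the opposite of the second, so $\alpha'\equiv 0$ on $I$. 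As $t_0$ was arbitrary and $(\underline t,\overline t)$ is connected, $\alpha$ is constant. This is the rigorous form of the formal computation in Section \ref{timeinvariance}: the ``Wick identity'' \refeq{wick4} used there is here absorbed into the fact that evaluating $\D(t)|g\rangle$ at the point of $\mathcal{E}^{s+1}_0$ with Cauchy data $[u]_t$ replaces $\source$ by $u$.

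\emph{Expected main obstacle.} The algebraic cancellation is immediate; the work is in the second $\mathcal C^1$ statement above and in the radius bookkeeping. The operators $\U(t)$ and $\D(t)$ are not bounded on any single $\F_r$ --- they only act $\F^{(1)}_r\to\F_r$ --- and $|h(t)\rangle$ lives in $\F_{\rho_t}$ with a $t$-dependent radius, so one has to interpose a fixed $\rho\in(\kappa_{t_0},\rho_{t_0})$, invoke the dense inclusions $\F_{\rho_t}\subset\F^{(1)}_\rho$, and upgrade the continuity statement of Theorem \ref{theo.produit.ordonne.general} to a $\mathcal C^1$ statement that justifies term-by-term differentiation of \refeq{defTexp} on $I$.
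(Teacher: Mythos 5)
Your proof is correct and follows essentially the same route as the paper: the well-definedness argument composes the same mapping properties, and your chain-rule formula \refeq{eq:pln2} for $\alpha(t)=|h(t)\rangle(u\sharpgdt G)$ is precisely the paper's splitting of the difference quotient into the two terms $(I)_h$ (handled by Lemma \ref{derive.translation}, i.e. differentiating the bra via Lemma \ref{lemme.u}) and $(II)_h$ (handled by Lemma \ref{lemmDeriveD}, i.e. differentiating the time-ordered exponential), with the same cancellation coming from $[u\sharpgdt G]_t=[u]_t$ and the equation $J|_t=\mathcal V([u]_t)$. The technical points you flag (interposing a fixed intermediate radius, upgrading continuity of $\tau\mapsto|h(\tau)\rangle$ to differentiability via the integral form of \refeq{defTexp} and dominated convergence) are exactly the ones the paper carries out.
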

\begin{proof}
    Let $|f\rangle$ belong to $\F_r$ and consider the function $\alpha:(\underline{t},\overline{t})\longrightarrow\R$  defined by (\ref{alpha(t)}). Since we have supposed condition (\ref{condition.cons}) the results of the previous section imply that $\alpha$ is well defined. Now we will show that, if $\exists J\in \mathcal{C}^0((\underline{t},\overline{t}),H^{s+1}(\R^n))$ such that $\square u+m^2 u + J = 0$ in the distribution sense, $\alpha$ admits a derivative and $\forall t\in(\underline{t},\overline{t})$
    \begin{equation}\label{base.dem}
    \alpha'(t) = \langle 0|\int_t\left[(J-{\cal V}([u]_t)\right]\phi^\td\exp\left(\int_t u\gd\but\right)T\exp\left(-\int_0^t ds\int_s {\cal V}\left(\source,{\partial \source\over \partial t}\right)\but\right)|f\rangle.
\end{equation}
    Hence if $J = {\cal V}([u]_t)$ we deduce $\alpha'(t)=0$ which completes the proof of the theorem.
    
    Let us recall some notations : for all $\tau\in\R$ we denote by $\D(\tau)$, $\U(\tau)$ and $\V(\tau)$ the operators
\[
    \D(\tau):=-\int_\tau {\cal V}\left(\source,{\partial \source\over \partial t}\right)\but ; \quad \U(\tau):=\int_\tau u\gd \but ;\quad
    \V(\tau):=\int_\tau J\but;\quad \langle [u]_\tau|:=\langle 0|e^{\U(\tau)}.
\]
Then thanks to Lemma \ref{lemme.u} we know that ${d \U(\tau) \over d\tau}=\V(\tau)$, and Corollary \ref{coroexp} shows that $\langle [u]_t|g\rangle =  |g\rangle(u\sharpgdt G)$. Using these notations $\alpha(t)$ just reads $\langle [u]_t| T\exp\left(\int_0^t ds\D(s)\right)|f\rangle$. \\

\noindent For all $t\in(\underline{t},\overline{t})$ and for all $r>0$ we denote by $R_{u,t}(r)$ the quantity
\[
    R_{u,t}(r):=r-||[u]_t||_{s+1},
\]
so if $r\in(0,r_0)$ satisfies $\sup_{t\in(\underline{t},\overline{t})} e^{tX}(||[u]_t||_{s+1})<r$ then we have $R_{u,t}(e^{-tX}(r))>0$ for all $t\in(\underline{t},\overline{t})$. Let fix $t\in (\underline{t},\overline{t})$ then using the continuity of $(t,r)\longmapsto R_{u,t}(r)$ and $e^{-tX}(r)$ we get :
\begin{itemize}
    \item  There exists $r'>0$ such that $0<r'<r$ and $R_{u,t}(e^{-tX}(r'))>0$.
    \item  Since $0<r'<r$ we have $e^{-tX}(r)>e^{-tX}(r')$ hence we can find $\varepsilon'_0>0$ such that for all $h\in\R$ s.t. $|h|<\varepsilon'_0$ we have $e^{-(t+h)X}(r)>e^{-tX}(r')$.
    \item We have $R_{u,t}(e^{-tX}(r'))>0$, so we can take $\gamma'>0$ satisfying $0<\gamma'<e^{-tX}(r')$ and $R_{u,t}(\gamma')>0$ and then there exists $\varepsilon''_0>0$ such that for all $h\in\R$ s.t. $|h|<\varepsilon''_0$ we have $R_{u,t+h}(\gamma')>0$.
\end{itemize}
Finally we set $\varepsilon_0=\min(\varepsilon'_0,\varepsilon''_0)$ and we have for all $|h|<\varepsilon_0$, $e^{-(t+h)X}(r)>e^{-tX}(r')>\gamma'$ and $R_{u,t+h}(\gamma')>0$ which ensures that for all $|h|<\varepsilon_0$
\[
  T\exp\left(\int_0^{t+h}ds \D(s)\right)|f\rangle\in\F_{e^{-(t+h)X}(r)}\subset \F_{e^{-tX}(r')}\subset \F_{\gamma'},
\]
and $\langle [u]_{t}|$ belongs to $\F_{\gamma'}^*$. So for all $|h|<\varepsilon_0$ we can decompose  $(\alpha(t+h)-\alpha(t))/h$ in the following way
\[
    {1 \over h}(\alpha(t+h)-\alpha(t))=(I)_h + (II)_h
\]
where $(I)_h$ and $(II)_h$ is defined by
\begin{align*}
    (I)_h:=&{1 \over h} \left[\langle [u]_{t+h}|-\langle [u]_{t}|\right] T\exp\left(\int_0^{t+h} ds\D(s)\right)|f\rangle \\
    (II)_h:=& \langle [u]_{t}|{1 \over h}\left[T\exp\left(\int_0^{t+h} ds\D(s)\right)-T\exp\left(\int_0^{t} ds\D(s)\right) \right]|f\rangle
\end{align*}
\begin{lemm}\label{derive.translation}
    Let $\gamma''>\gamma'$ then we have the following limit with respect to the $\F_{\gamma''}^*$ topology
    \[
        \lim_{h\to 0}{1 \over h}(\langle [u]_{t+h}|-\langle [u]_{t}|) = \langle [u]_{t}|\V(t).
    \]
\end{lemm}
\begin{proof}
    Let $|g\rangle$ belong to $\F_{\gamma'}$ and we write $|g\rangle:=\sum_{p\ge 0}|g_p\rangle$ where for all $p\in\N$, $|g_p\rangle\in S(\bigotimes^p{\cal E}_0^{s+1})^*$. We denote temporarily by $A(t)$ and $B(t)$ the functions defined by
    \[
        A(t):=u\sharpgdt G \quad \hbox{and} \quad B(t):=(J|_t)\sharp_t G.
    \]
    Then using identity \refeq{demystifier} we get for all $p\in\N^*$
    \[
        { \left(\langle [u]_{t+h}|-\langle [u]_{t}|\right)|g_p\rangle \over h}= 
        \sum_{l=0}^{p-1}|g_p\rangle\left({1 \over h}\left[A(t+h) -A(t)\right]\otimes A(t+h)^{\otimes l}\otimes A(t)^{\otimes(p-1-l)}\right).
    \]
    On the other hand we have $\langle [u]_{t}| \V(t) |g_p\rangle =  p|g_p\rangle\left(B(t) \otimes A(t)^{\otimes (p-1)}\right)$. So since $|g_p\rangle$ is symmetric we can write
    \begin{multline*}
        \langle [u]_{t}| \V(t) |g_p\rangle = \sum_{l=0}^{p-1}|g_p\rangle \left(B(t) \otimes A(t+h)^{\otimes l}\otimes A(t)^{\otimes(p-1-l)}\right) \\
        + \sum_{l=0}^{p-1}\sum_{k=0}^{l-1} |g_p\rangle (B(t) \otimes (A(t+h)-A(t))\otimes A(t+h)^{\otimes k}\otimes A(t)^{\otimes (p-2-k)}).
    \end{multline*}
    Then these two identities lead to the following estimation
    \begin{multline*}
        \left| \left[{\langle [u]_{t+h}|-\langle [u]_{t}| \over h}-\langle [u]_{t}|\V(t)\right]|g_p\rangle\right|\le 
        ||{\Delta(h) \over h}-B(t)||_{{\cal E}_0^{s+1}}\sum_{l=0}^{p-1} \lc g_p \rf
        ||A(t+h)||_{{\cal E}_0^{s+1}}^{l}||A(t)||_{{\cal E}_0^{s+1}}^{p-1-l}\\
        +||\Delta(h)||_{{\cal E}_0^{s+1}}||B(t)||_{{\cal E}_0^{s+1}}\sum_{l=0}^{p-1}\sum_{k=0}^{l-1}\lc g_p \rf ||A(t+h)||_{{\cal E}_0^{s+1}}^{k}||A(t)||_{{\cal E}_0^{s+1}}^{p-2-k}
    \end{multline*}
    where $\Delta(h)$ denotes the function $\Delta(h):=A(t+h)-A(t)$.
    But for all $|h|<\varepsilon_0$ we know that $R_{u,t+h}(\gamma')>0$ hence we have $||A(t+h)||_{{\cal E}_0^{s+1}}<\gamma'$. So by summing the previous estimation for all $p\in\N$ and using the fact that $|g\rangle\in\F_{\gamma'}$ we finally get
    \begin{multline*}
        \left| \left[{\langle [u]_{t+h}|-\langle [u]_{t}| \over h}-\langle [u]_{t}|\V(t)\right]|g\rangle\right|\le
        ||{\Delta(h) \over h}-B(t)||_{{\cal E}_0^{s+1}} N^{(1)}_{\gamma'}(|g\rangle) \\
        +{1 \over 2}||\Delta(h)||_{{\cal E}_0^{s+1}}||B(t)||_{{\cal E}_0^{s+1}} N^{(2)}_{\gamma'}(|g\rangle).
    \end{multline*}
    Consider $\gamma''>\gamma'$ then we have the continuous injection $\F^{(k)}_{\gamma'}\subset \F_{\gamma''}$ for all $k\in\N$. Hence there exists $\mu,\nu>0$ such that $N^{(1)}_{\gamma'}\le \mu N_{\gamma''}$ and $N^{(2)}_{\gamma'}\le \nu N_{\gamma''}$. Moreover since $\F_{\gamma'}$ is dense in $\F_{\gamma''}$ we finally get that for all $|h|<\varepsilon_0$
        \[
        \left|\left|\left[{\langle [u]_{t+h}|-\langle [u]_{t}| \over h} - \langle [u]_{t}|\V(t)\right]\right|\right|_{\F^*_{\gamma''}}\le
        \mu\left|\left|{\Delta(h) \over h}-B(t)\right|\right|_{{\cal E}_0^{s+1}}+ \nu' ||\Delta(h)||_{{\cal E}_0^{s+1}},
        \]
    where $\nu'$ denotes $\nu':=\nu||B(t)||_{{\cal E}_0^{s+1}}$. But $u\in
    \cap_{\ell=0,1,2}\mathcal{C}^\ell((\underline{t},\overline{t}),H^{s+2-\ell}(\R^n))$, so lemma \ref{lemmapp3} ensures that we have the following limits with respect to ${\cal E}_0^{s+1}$ topology :
    \[
            \lim_{h\to 0}{\Delta(h) \over h}=\lim_{h\to 0}{1 \over h}\left[A(t+h)-A(t)\right]=B(t),
    \]
    which completes the proof.
\end{proof}

\noindent Let us take $\gamma''=e^{-tX}(r')>\gamma'$ in lemma \ref{derive.translation}, then thanks to theorem \ref{theo.produit.ordonne.general} we know that $h\longmapsto T\exp\left(\int_0^{t+h}ds \D(s)\right)|f\rangle$ is continuous from $(t-\varepsilon_0,t+\varepsilon_0)$ to $F_{e^{-tX}(r')}$, so we get
\begin{equation}\label{lim.I}
    \lim_{h\to 0}(I)_h = \langle [u]_{t}| \V(t) T\exp\left(\int_0^{t}ds \D(s)\right)|f\rangle.
\end{equation}
\begin{lemm}\label{lemmDeriveD}
    Consider $\varphi\in{\cal E}_0^{s+1}$ be such that $||\varphi||_{{\cal E}_0^{s+1}}<\gamma'$ then $T\exp\left(\int_0^\tau ds\D(s)\right)|f\rangle(\varphi)$ admits derivative with respect to $\tau$ for $\tau=t$ and we have
    \[
        {d \over d\tau}\left.\left(T\exp\left(\int_0^\tau ds\D(s)\right)|f\rangle(\varphi)\right)\right|_{\tau=t}=\D(t)T\exp\left(\int_0^t ds\D(s)\right)|f\rangle(\varphi)
    \]
\end{lemm}
\begin{proof}
    Notice that we have chosen $\gamma'$ such that  $T\exp\left(\int_0^\tau ds\D(s)\right)|f\rangle$ belongs to $\F_{\gamma'}$ for all $\tau\in(t-\varepsilon_0,t+\varepsilon_0)$. Let $|v(\tau)\rangle$ denotes $|v(\tau)\rangle:=T\exp\left(\int_0^\tau ds\D(s)\right)|f\rangle\in\F_{\gamma'}$. Then using definition of the ordered exponential we have for all $|h|<\varepsilon_0$
    \[
        {1 \over h}(|v(t+h)\rangle-|v(t)\rangle)(\varphi)={1 \over h}\int_t^{t+h}d\tau \D(\tau) |v(\tau)\rangle(\varphi).
    \]
    Let us write $|v(\tau)\rangle=\sum_{p\ge 0}|v_p(\tau)\rangle$ where for all $p\in\N$, $|v_p(\tau)\rangle\in S(\bigotimes^p{\cal E}_0^{s+1})^*$. Then for all $p\in\N^*$ we have
    \[
        {1 \over h}\int_t^{t+h}d\tau \D(\tau) |v(\tau)\rangle(\varphi)={1 \over h}\int_t^{t+h}d\tau p|v_p(\tau)\rangle \left(\left({\cal V}([\varphi]_\tau)\sharp_\tau G\right)\otimes \varphi^{\otimes (p-1)}\right).
    \]
    But we know that $\tau\longmapsto[\varphi]_\tau\in H^{s+1}\times H^s$ and ${\cal V}$ are continuous so that the map $\tau\longmapsto{\cal V}([\varphi]_\tau)\in H^{s}$ is continuous. Finally lemma \ref{lemmapp3} and theorem  \ref{theo.produit.ordonne.general} gives respectively that $\tau\longmapsto{\cal V}([\varphi]_\tau)\sharp_\tau G\in {\cal E}_0^{s+1}$ and $\tau\longmapsto|v(\tau)\rangle$ are continous. So we finally get that for all $p\in\N^*$
    \[
        \lim_{h\to 0}{1 \over h}\int_t^{t+h}d\tau \D(\tau) |v(\tau)\rangle(\varphi)=\D(t)|v_p(t)\rangle.
    \]
    On the other hand, using properties of operator $\D(\tau)$ we have
    \[
        \left|p|v_p(\tau)\rangle \left(\left({\cal V}([\varphi]_\tau)\sharp_\tau G\right)\otimes \varphi^{\otimes (p-1)}\right)\right|
        \le  \lc {\cal V} \rf (\gamma') p \lc v_p(\tau) \rf ||\varphi||_{{\cal E}_0^{s+1}}^{p-1}
    \]
    so since $||\varphi||_{{\cal E}_0^{s+1}}<\gamma'$ and $|v_p(\tau)\rangle$ is continuous from $(t-\varepsilon_0,t+\varepsilon_0)$ to $\F_{\gamma'}$  we can apply Lebesgue theorem in order to conclude.
\end{proof}

\noindent Using identity \refeq{demystifier} we get the following expression for $(II)_h$
\[
    (II)_h={1 \over h}\left[T\exp\left(\int_0^{t+h} ds\D(s)\right)-T\exp\left(\int_0^{t} ds\D(s)\right) \right]|f\rangle (u\sharpgdt G).
\]
Thus since we have $||u\sharp_t G||_{{\cal E}_0^{s+1}}<\gamma'$ we can take $\varphi=u\sharp_t G$ in lemma \ref{lemmDeriveD} and we get
\[
    \lim_{h\to 0} (II)_h = \D(t)T\exp\left(\int_0^t ds\D(s)\right)|f\rangle(u\sharpgdt G).
\]
Let us denote by $|w\rangle=\sum_{p\ge 0}|w_p\rangle$ the element $|w\rangle:=T\exp\left(\int_0^t ds\D(s)\right)|f\rangle$ of $\F_{\gamma'}$, as usual we assume that for all $p\in\N$, $|w_p\rangle$ belongs to $S(\bigotimes^p{\cal E}_0^{s+1})^*$. Then using definition of $\D(t)$ the previous limit reads
\[
    \D(t)|w\rangle(u\sharpgdt G)= -\sum_{p\ge 0}p|w_p\rangle \left(\left[{\cal V}\left([u\sharpgdt G]_t\right)\right]\sharp_t G\otimes (u\sharpgdt G)^{\otimes (p-1)}\right).
\]
But directly from the definition of $G$ we see that $[u\sharpgdt G]_t=(u|_t,{\partial u \over \partial t}|_t)=:[u]_t$. So the right hand side of the previous identity reads
\[
    -\sum_{p\ge 0}p|w_p\rangle \left(\left[{\cal V}\left([u\sharpgdt G]_t\right)\right]\sharp_t G\otimes (u\sharpgdt G)^{\otimes (p-1)}\right) = -\int_t {\cal V}([u]_t)\but |w\rangle (u\sharpgdt G).
\]
So using  \refeq{demystifier}, we finally get
\[
\lim_{h\to 0} (II)_h = -\langle [u]_{t}| \left[\int_t {\cal V}([u]_t)\but\right] T\exp\left(\int_0^t ds\D(s)\right)|f\rangle
\]
which together with \refeq{lim.I} and lemma \ref{lemme.u} leads to \refeq{base.dem}.
\end{proof}

\section{Generalization to systems of PDEs}\label{sec:generalization_to_systems} 
The previous construction can be adapted without difficulty for systems of PDE. More precisely, given $q\in\N^*$ we consider a system of PDE which reads
\begin{equation}\label{kg.system}
    {\partial^2 u \over \partial t^2}-\Delta u +m^2 u +W(u,\nabla u)=0
\end{equation}
where $u=(u^1,\ldots,u^q)$ denotes a function $u:M \longrightarrow\R^q$ and $F$ an analytic function $W=(W^1,\ldots,W^q):\left(\R\times\R^{n+1}\right)^q\longrightarrow\R^q$. Then we want to define an analogue of \refeq{magic} for \refeq{kg.system}. 

In the heuristic presentation of Section \ref{formalsection} it suffices to replace the formal algebra $\mathcal{A}$ by $ \mathcal{A}_q$ spanned over $\R$ by the symbols
$(\source_i(x),\but_j(y))_{x,y\in M;\ i,j\in\lent 1,q\rent}$
which satisfy properties analogous to (i), (ii) and (iii). More precisely we suppose that these symbols satisfy
\begin{gather}
    \forall x,y\in M,\quad \forall i,j\in\lent 1,q\rent;\quad [\source_i(y),\but_j(x)] = \delta_{ij} G(y-x),\label{Hsystem1}\\
    \forall x,y\in M,\quad \forall i,i'\in\lent 1,q\rent;\quad [\source_i(y),\source_{i'}(x)] = [\but_i(y),\but_{i'}(x)] = 0,\label{Hsystem2}
\end{gather}
and $\source_i(x)$, $\but_j(x)$ depend smoothly on $x$ and $\square \source_i +
  m^2 \source_i = 0$ and $\square \source_j + m^2 \source_j = 0$. Moreover we suppose the existence of two representations $\F$ and $\F^*$ and two elements $|0\rangle$ and $\langle 0|$ belonging respectively to $\F$ and $\F^*$ such that
$\forall y\in M$, $\forall j\in\lent 1,q\rent$, $\forall x\in M$, $\forall i\in\lent 1,q\rent$,
\[
\phi^\td_j(y)|0\rangle = 0\quad;\quad \langle 0|\phi^\tx_i(x) = 0 \quad;\quad \langle0|0\rangle = 1.
\]
So in the case where $q=1$ we just recover the algebra ${\cal A}$ considered previously. Then, given an element  $|f\rangle$ of $\F$ we consider the quantity ${\cal F}_t^\varphi[u]_t$ equal to
\begin{multline}\label{magic.system}
    \langle 0|\hbox{exp}\left(\int_{y^0=t}\left\{u(y){\gd\over \partial y^0}\bigg|\phi^\td(y)\right\} d\vec{y} \right) T\hbox{exp}\left(-\int_{0<z^0<t}\lambda \left\{F(\phi^\tx(z))\big|\phi^\td(z)\right\} dz\right) |f\rangle,
\end{multline}
where
\begin{align*}
        \left\{u(y){\gd\over \partial y^0}\bigg|\phi^\td(y)\right\}&:= \sum_{l=1}^q u^l(y){\gd\over \partial y^0}\phi^\td_l(y)\\
        \left\{F(\phi^\tx(z))\big|\phi^\td(z)\right\}&:= \sum_{l=1}^q F^l(\phi^\tx(z))\phi^\td_l(z).
\end{align*}
Then \refeq{magic.system} is the analogue of \refeq{magic} for system \refeq{kg.system}. Indeed using the formal computations we have done for the case $q=1$, it is very easy to see that formally ${\cal F}_t^\varphi[u]_t$ given by \refeq{magic.system} does not depend on $t$ if $u$ satisfies \refeq{kg.system}. 

In order to give a rigorous meaning to \refeq{magic.system}, we let, for $s>n/2$, $H^s(\R^n,\R^q)$ to be the space of vector valued functions $f:\R^n\longrightarrow \R^q$ such that $(m^2+|\xi|^2)^{s/2}|\chapo{f}(\xi)|$ belongs to $L^2(\R^n)$. Then we set
\[
    \mathcal{E}_0^{s+1}:= \{\varphi\in \mathcal{C}^0(\R,H^{s+1}(\R^n,\R^q))\cap \mathcal{C}^1(\R,H^s(\R^n,\R^q))|\ \square \varphi + m^2\varphi = 0\}.
\]
The definition of the `Fock space' $\F_r$ follows then the same line as in Section \ref{sectionsetting}. The creation operators $\phi^\tx_j(x)$ acting on $\F_r$, for $x\in M$ and $j\in\lent 1,q\rent$ are just defined as:
\[
\phi^\tx_j(x):|f\rangle\in\F_R\longmapsto |x\rangle^j |f\rangle \in\F_R
\]
where $|x\rangle^j$ denote the functional $|x\rangle^j:\varphi\in {\cal E}_0^{s+1}\longmapsto \varphi^j(x)$ which is well defined since $s>n/2$ (here $\varphi^1$, \dots, $\varphi^q$ denote the components of $\varphi$). The idea for defining the annihiliation operators $\phi^\td_j(y)$ is to set, for $|f\rangle = \sum_{p\ge 0}|f_p\rangle\in \F_R$,
\[
    \forall \varphi\in {\cal E}_0^{s+1};\ \left(\phi^\td_j(y)\right)(\varphi):= \sum_{p=1}^\infty p|f_p\rangle\left((\Gamma_y e_j) \otimes \varphi\otimes \cdots\otimes \varphi\right)
\]
where $e_j$ denotes the $j$--th vector of the canonical basis of $\R^q$. Again this definition is inconsistant since we know that $\Gamma_y$ does not belong to ${\cal E}_0^{s+1}$ if $s>n/2$. However by using the same constructions as in Section \ref{sectionsetting} there are no difficulties to define the smeared versions of these operators:
\[
    \begin{array}{lcr}
    \displaystyle \int_{y^0=0} f(y) \but_j(y) dy & \text{ or } &
    \displaystyle \int_{0<z^0<t} W^j(\phi^\tx(z))\phi^\td_j(z) dz.
    \end{array}
\]
One can adapt the previous results in order to make sense of \refeq{magic.system} and to extend Theorem \ref{mainthm} to this situation.\\

\noindent An example of application is the wave equation for
maps $u$ from $\R^{n+1}$ to the sphere $S^q\subset \R^{q+1}$. These are the maps into $\R^{q+1}$ which satisfy the pointwise contraint $|u|^2= 1$ and the wave map equation
\[
\square u + \left(|{\partial u\over \partial t}|^2 - |\vec{\nabla}u|^ 2\right) u = 0,
\]
and our result applies to it with $W(u,\nabla u) =  \left(|{\partial u\over \partial t}|^2 - |\vec{\nabla}u|^ 2\right) u - m^2u$.

\appendix

\section{Appendix}
\subsection{Some results about the space $H^s(\R^n)$, for $s>n/2$}
\begin{lemm}\label{lemmapp1} Assume that $s>n/2$. Then for any function $\varphi\in H^s(\R^n)$,
\begin{equation}\label{Linfinity}
||\varphi||_{L^\infty} \leq {1\over
\sqrt{2\pi}^n}||\widehat{\varphi}||_{L^1}\leq {1\over
\sqrt{2\pi}^n}I(n,m,s)||\varphi||_{H^s},
\end{equation}
where $\widehat{\varphi}$ is the Fourier transform of $\varphi$ and
\[
I(n,m,s):= \left(\int_{\R^n} \epsilon(\xi)^{-2s}d\xi\right)^{1/2} = m^{{n\over 2}-s}\sqrt{|S^{n-1}|} \left(\int_0^\infty {t^{n-1}dt\over (1+t^2)^s}\right)^{1/2}.
\]
Moreover $H^s(\R^n)\subset \mathcal{C}^0(\R^n)$.
\end{lemm}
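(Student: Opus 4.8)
The plan is to obtain both inequalities of \refeq{Linfinity} from the Fourier inversion formula combined with the Cauchy--Schwarz inequality, and then to deduce the continuity statement. First I would observe that, once $\widehat{\varphi}$ is known to be integrable, Fourier inversion gives $\varphi(\vec{x}) = \frac{1}{\sqrt{2\pi}^n}\int_{\R^n}\widehat{\varphi}(\xi)e^{i\vec{x}\cdot\xi}\,d\xi$ for almost every $\vec{x}$, whence $|\varphi(\vec{x})| \leq \frac{1}{\sqrt{2\pi}^n}\|\widehat{\varphi}\|_{L^1}$; this is the first inequality. To prove that $\widehat{\varphi}\in L^1$ and to get the second inequality at the same time, I would write $\widehat{\varphi} = \epsilon^{-s}\cdot(\epsilon^s\widehat{\varphi})$ and apply Cauchy--Schwarz:
\[
\|\widehat{\varphi}\|_{L^1} = \int_{\R^n}\epsilon(\xi)^{-s}\,\bigl|\epsilon(\xi)^s\widehat{\varphi}(\xi)\bigr|\,d\xi \leq \Bigl(\int_{\R^n}\epsilon(\xi)^{-2s}\,d\xi\Bigr)^{1/2}\|\epsilon^s\widehat{\varphi}\|_{L^2} = I(n,m,s)\,\|\varphi\|_{H^s}.
\]

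The only substantive point is that this requires $\epsilon^{-s}\in L^2(\R^n)$, i.e. $\int_{\R^n}(|\xi|^2+m^2)^{-s}\,d\xi < \infty$. Passing to polar coordinates, this integral equals $|S^{n-1}|\int_0^\infty r^{n-1}(r^2+m^2)^{-s}\,dr$, which converges at infinity precisely because $n-1-2s < -1$, that is $s>n/2$; near $r=0$ there is no problem since $m>0$. This finiteness is exactly what legitimates the use of Fourier inversion in the first step. For the explicit value of $I(n,m,s)$ I would substitute $r = mt$ in the radial integral, obtaining $\int_{\R^n}(|\xi|^2+m^2)^{-s}\,d\xi = m^{n-2s}\,|S^{n-1}|\int_0^\infty t^{n-1}(1+t^2)^{-s}\,dt$, and taking the square root gives the stated formula $I(n,m,s) = m^{\frac{n}{2}-s}\sqrt{|S^{n-1}|}\bigl(\int_0^\infty t^{n-1}(1+t^2)^{-s}\,dt\bigr)^{1/2}$.

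Finally, for the inclusion $H^s(\R^n)\subset\mathcal{C}^0(\R^n)$: since $\widehat{\varphi}\in L^1(\R^n)$, the function $\vec{x}\longmapsto\frac{1}{\sqrt{2\pi}^n}\int_{\R^n}\widehat{\varphi}(\xi)e^{i\vec{x}\cdot\xi}\,d\xi$ is continuous on $\R^n$ by the dominated convergence theorem (the integrand is dominated uniformly in $\vec{x}$ by the integrable function $|\widehat{\varphi}|$), and it agrees almost everywhere with $\varphi$; hence $\varphi$ admits a continuous representative. I do not anticipate any real obstacle here: the argument is the classical Sobolev embedding computation, and the sole delicate ingredient is the convergence condition $s>n/2$, which is exactly what makes $\epsilon^{-s}$ square-integrable and therefore $\widehat{\varphi}$ integrable.
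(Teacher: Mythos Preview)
Your proof is correct and follows essentially the same route as the paper: Fourier inversion for the first inequality, Cauchy--Schwarz with the splitting $\widehat{\varphi}=\epsilon^{-s}(\epsilon^s\widehat{\varphi})$ for the second, and the polar-coordinate substitution $r=mt$ for the explicit form of $I(n,m,s)$. The only minor variation is in the continuity step: the paper argues by density of smooth functions in $H^s(\R^n)$ together with the $L^\infty$ bound, whereas you apply dominated convergence directly to the inversion integral; both are standard and equally valid.
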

Note that $I(n,m,s)\leq m^{{n\over 2}-s}\sqrt{|S^{n-1}|} \sqrt{2s\over n(2s-n)}$. Moreover $|S^{n-1}| = 2{\pi^{n/2}\over \Gamma(n/2)}$.\\
\emph{Proof} --- The inequality $||\varphi||_{L^\infty}\leq {1\over
\sqrt{2\pi}^n}||\widehat{\varphi}||_{L^1}$ is straightforward, hence
it suffices to estimate $||\widehat{\varphi}||_{L^1}$ in terms of
$||\varphi||_{H^s}$. This follows by using the Cauchy--Schwarz
inequality
\[
||\widehat{\varphi}||_{L^1} = \int_{\R^n}\left(\epsilon(\xi)^s|\widehat{\varphi}(\xi)|\right) {d\xi \over \epsilon(\xi)^s} \leq ||\epsilon^s\widehat{\varphi}||_{L^2}\sqrt{\int_{\R^n} \epsilon(\xi)^{-2s}d\xi} = ||\varphi||_{H^s} I(n,m,s).
\]
But by using spherical coordinates on $\R^n$:
\[
I(n,m,s)^2 = |S^{n-1}| \int_0^\infty {r^{n-1}dr\over (m^2+r^2)^s} = |S^{n-1}|m^{n-2s}\int_0^\infty {t^{n-1}dt\over (1+t^2)^s},
\]
where we have posed $t= r/m$. So (\ref{Linfinity}) follows. Then one can deduce that all functions in $H^s(\R^n)$ are continuous by using the density of smooth maps in $H^s(\R^n)$ and (\ref{Linfinity}).

\begin{lemm}\label{lemmapp2} Assume that $s>n/2$. Then for any functions $\varphi,\psi\in H^s(\R^n)$ the product $\varphi\psi$ belongs to $H^s(\R^n)$ and
\begin{equation}\label{productHs}
||\varphi\psi||_{H^s} \leq 2^s I(n,m,s) ||\varphi||_{H^s}||\psi||_{H^s}.
\end{equation}
Hence $\left(H^s(\R^n),+,\cdot\right)$ is an algebra.\bbox
\end{lemm}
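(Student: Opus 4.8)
The plan is to estimate the $H^s$ norm of $\varphi\psi$ directly on the Fourier side. Since $s>n/2$, Lemma \ref{lemmapp1} already tells us that $\widehat{\varphi}$ and $\widehat{\psi}$ lie in $L^1(\R^n)$, so the convolution theorem applies in the form
\[
\widehat{\varphi\psi} = \frac{1}{\sqrt{2\pi}^n}\,\widehat{\varphi}\ast\widehat{\psi},
\]
and for every $\xi\in\R^n$ we have
\[
\epsilon(\xi)^s\,\widehat{\varphi\psi}(\xi) = \frac{1}{\sqrt{2\pi}^n}\int_{\R^n}\epsilon(\xi)^s\,\widehat{\varphi}(\xi-\eta)\,\widehat{\psi}(\eta)\,d\eta.
\]
The whole issue is to transfer the weight $\epsilon(\xi)^s$ onto one of the two factors under the integral.

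To this end I would first establish the elementary sub-additivity of $\epsilon$: for all $\xi,\eta\in\R^n$,
\[
\epsilon(\xi)\le\epsilon(\xi-\eta)+\epsilon(\eta).
\]
This follows from the triangle inequality $|\xi|\le|\xi-\eta|+|\eta|$ by squaring both sides and using $\sqrt{(m^2+a^2)(m^2+b^2)}\ge ab$. Hence $\epsilon(\xi)^s\le 2^s\max\bigl(\epsilon(\xi-\eta),\epsilon(\eta)\bigr)^s\le 2^s\bigl(\epsilon(\xi-\eta)^s+\epsilon(\eta)^s\bigr)$, where the last step uses $\max(a,b)^s\le a^s+b^s$ for $a,b\ge 0$.

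Inserting this bound and taking absolute values gives the pointwise estimate
\[
\bigl|\epsilon(\xi)^s\,\widehat{\varphi\psi}(\xi)\bigr|\le\frac{2^s}{\sqrt{2\pi}^n}\Bigl[\bigl((\epsilon^s|\widehat{\varphi}|)\ast|\widehat{\psi}|\bigr)(\xi)+\bigl(|\widehat{\varphi}|\ast(\epsilon^s|\widehat{\psi}|)\bigr)(\xi)\Bigr].
\]
Then I would take the $L^2$ norm in $\xi$, use the triangle inequality, and apply Young's inequality $||f\ast g||_{L^2}\le||f||_{L^2}||g||_{L^1}$ to each term: the first is $\le||\epsilon^s\widehat{\varphi}||_{L^2}\,||\widehat{\psi}||_{L^1}=||\varphi||_{H^s}\,||\widehat{\psi}||_{L^1}$ and the second is $\le||\widehat{\varphi}||_{L^1}\,||\psi||_{H^s}$. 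Bounding $||\widehat{\varphi}||_{L^1}$ and $||\widehat{\psi}||_{L^1}$ by $I(n,m,s)||\varphi||_{H^s}$ and $I(n,m,s)||\psi||_{H^s}$ respectively, via Lemma \ref{lemmapp1}, we obtain
\[
||\varphi\psi||_{H^s}\le\frac{2^{s+1}}{\sqrt{2\pi}^n}\,I(n,m,s)\,||\varphi||_{H^s}\,||\psi||_{H^s};
\]
since $\sqrt{2\pi}^n\ge\sqrt{2\pi}\ge 2$, the prefactor is at most $2^s\,I(n,m,s)$, which is exactly \refeq{productHs}. In particular $\varphi\psi\in H^s(\R^n)$, so $H^s(\R^n)$ is an algebra. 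There is no real obstacle in this argument — it is the classical proof of the multiplicative property of $H^s$ for $s>n/2$; the only points needing a little attention are the sub-additivity of $\epsilon$ and the bookkeeping of constants required to land precisely on the factor $2^s$, everything else being Young's inequality and the already proved Lemma \ref{lemmapp1}.
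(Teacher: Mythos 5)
Your proof is correct and follows essentially the same route as the paper: pass to the Fourier side, split the weight $\epsilon(\xi)^s$ using the sub-additivity $\epsilon(\xi)\le\epsilon(\xi-\eta)+\epsilon(\eta)$, apply Young's inequality $||f\ast g||_{L^2}\le ||f||_{L^2}||g||_{L^1}$, and invoke Lemma \ref{lemmapp1} for the $L^1$ norms of the Fourier transforms. The only differences are bookkeeping: you keep the $(2\pi)^{-n/2}$ factor in the convolution theorem and use the bound $\epsilon(\xi)^s\le 2^s(\epsilon(\xi-\eta)^s+\epsilon(\eta)^s)$ instead of the paper's convexity bound with $2^{s-1}$, and these compensate to give the same constant $2^s I(n,m,s)$.
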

\emph{Proof} --- Since $\widehat{\varphi\psi} = \widehat{\varphi}* \widehat{\psi}$, we have
\begin{equation}\label{app1}
|\epsilon(\xi)^s\widehat{\varphi\psi}(\xi)| = \epsilon(\xi)^s
\left| \int_{\R^n}\widehat{\varphi}(\xi-\eta) \widehat{\psi}(\eta)  d\eta\right| \leq \epsilon(\xi)^s\int_{\R^n} |\widehat{\varphi}(\xi-\eta)|\ |\widehat{\psi}(\eta)| d\eta.
\end{equation}
But by the triangular inequality:
\[
\epsilon(\xi) = \sqrt{(m+0)^2 + |(\xi-\eta) + \eta|^2} \leq  \sqrt{m^2 + |\xi-\eta|^2} + \sqrt{0^2 + |\eta|^2}
\leq \epsilon(\xi-\eta) + \epsilon(\eta)
\]
and hence
\[
\epsilon(\xi)^s \leq 2^{s-1}\left( \epsilon(\xi-\eta)^s + \epsilon(\eta)^s\right).
\]
By using this in (\ref{app1}) we obtain
\[
\begin{array}{ccl}
|\epsilon(\xi)^s\widehat{\varphi\psi}(\xi)| & \leq & 2^{s-1}\int_{\R^n} |\widehat{\varphi}(\xi-\eta)|\ |\widehat{\psi}(\eta)| \left( \epsilon(\xi-\eta)^s + \epsilon(\eta)^s\right) d\eta\\
& = & 2^{s-1}\left(\int_{\R^n} \epsilon(\xi-\eta)^s|\widehat{\varphi}(\xi-\eta)|\ |\widehat{\psi}(\eta)| + \int_{\R^n} \epsilon(\eta)^s|\widehat{\varphi}(\xi-\eta)|\ |\widehat{\psi}(\eta)|\right)\\
& = & 2^{s-1}\left((\epsilon^s|\widehat{\varphi}|)*|\widehat{\psi}|(\xi) + |\widehat{\varphi}| * (\epsilon^s|\widehat{\psi}|)(\xi) \right).
\end{array}
\]
We now deduce by using Young's inequality $||f*g||_{L^2}\leq ||f||_{L^2}||g||_{L^1}$ that
\[
\begin{array}{ccl}
||\varphi\psi||_{H^s} & = & ||\epsilon^s\widehat{\varphi\psi}||_{L^2}\\
& \leq & 2^{s-1}\left(||(\epsilon^s|\widehat{\varphi}|)*|\widehat{\psi}|\,{||}_{L^2} + {||}\,|\widehat{\varphi}| * (\epsilon^s|\widehat{\psi}|)||_{L^2}\right) \\
& \leq & 2^{s-1}\left(||\epsilon^s\widehat{\varphi}||_{L^2}||\widehat{\psi}||_{L^1} + ||\widehat{\varphi}||_{L^1} ||\epsilon^s\widehat{\psi}||_{L^2}\right)\\
& = & 2^{s-1}\left(||\varphi||_{H^s}||\widehat{\psi}||_{L^1} + ||\widehat{\varphi}||_{L^1} ||\psi||_{H^s}\right).
\end{array}
\]
But because of Lemma \ref{lemmapp1} we deduce that
\[
||\varphi\psi||_{H^s} \leq 2^{s-1}\left(||\varphi||_{H^s} I(n,m,s)||\psi||_{H^s} + I(n,m,s)||\varphi||_{H^s}||\psi||_{H^s}\right) = 2^s I(n,m,s)||\varphi||_{H^s}||\psi||_{H^s}.
\]
Hence Lemma \ref{lemmapp2} is proved. \bbox


\subsection{About the operator $v\longmapsto v\sharp_tG^{(k)}$}\label{abouttheop}
For any $r\in \R$, $k\in \N$, $t\in \R$ and for any function $v\in H^r(\R^n)$ the definition of $v\sharp_tG^{(k)}$ given by (\ref{vsharpG}) is equivalent to: $\forall (x^0,\vec{x})\in \R^{n+1}$, $v\sharp_tG^{(k)}(x^0,\vec{x}) = v(\vec{x})*g^{(k)}_{t-x^0}$, where $g^{(k)}_t(\vec{x}):= {\partial^kG\over \partial t^k} (t, \vec{x})$. This is indeed a consequence of $g^{(k)}_t(-\vec{x}) = g^{(k)}_t(\vec{x})$. Alternatively, since the spatial Fourier transform of ${\partial^kG\over \partial t^k}$ is
\[
{\partial^k\widehat{G}\over \partial t^k}(t,\xi) = {1\over \sqrt{2\pi}^n} \hbox{Re}\left(i^{k-1}\epsilon(\xi)^{k-1} e^{i\epsilon(\xi)t}\right),
\]
$v\sharp_tG^{(k)}$ can be defined through its spatial Fourier transform $\widehat{v*g^{(k)}_{t-x^0}} = \sqrt{2\pi}^n\widehat{\widehat{v}g^{(k)}_{t-x^0}}$:
\begin{equation}\label{app2}
\widehat{v\sharp_tG^{(k)}}(x^0,\xi) = \hbox{Re}\left(i^{k-1}\epsilon(\xi)^{k-1} e^{i\epsilon(\xi)(t-x^0)}\right) \widehat{v}(\xi).
\end{equation}
\begin{lemm}\label{lemmapp3}
Let $r\in \R$, $k,\ell\in \N$ and $v\in H^r(\R^n)$. Then
\begin{itemize}
\item[(i)] $\forall t,x^0\in \R$, $v\sharp_tG^{(k)}(x^0,\cdot)\in H^{r-k+1}(\R^n)$ and $||v\sharp_tG^{(k)}(x^0,\cdot)||_{H^{r-k+1}}\leq ||v||_{H^r}$;
\item[(ii)] $\forall t\in \R$, $v\sharp_tG^{(k)}\in \mathcal{C}^{\ell}(\R,H^{r-\ell-k+1}(\R^n))$ and, if $0\leq j\leq \ell$,
\[
{\partial^j\over \partial (x^0)^j}\left(v\sharp_tG^{(k)}\right) = (-1)^j v\sharp_tG^{(k+j)};
\]
\item[(iii)] $v\sharp_tG^{(k)}$ is a weak solution of (\ref{kg}), i.e. $v\sharp_tG^{(k)}\in \mathcal{E}^{r-k+1}_0$ and
\[
 ||v\sharp_tG^{(k)}||_{\mathcal{E}^{r-k+1}} = ||v||_{H^r}.
\]
\item[(iv)] The map $t\longmapsto v\sharp_t G^{(k)}$ belongs to ${\cal C}^\ell(\R,{\cal E}^{r-l-k+1})$ and for all $j\in\lent 0,\ell\rent$
\[
    {\partial^j\over \partial t^j}\left(v\sharp_tG^{(k)}\right) = v\sharp_tG^{(k+j)}
\]
\end{itemize}
\end{lemm}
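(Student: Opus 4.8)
The plan is to deduce the whole lemma from the explicit spatial Fourier transform \refeq{app2}, namely $\widehat{v\sharp_tG^{(k)}}(x^0,\xi)=\hbox{Re}(i^{k-1}\epsilon(\xi)^{k-1}e^{i\epsilon(\xi)(t-x^0)})\widehat v(\xi)$, together with two elementary facts: the pointwise bound $|\hbox{Re}(i^{k-1}\epsilon^{k-1}e^{i\epsilon(t-x^0)})|\le\epsilon^{k-1}$, and the dispersion identity $-\epsilon(\xi)^2+|\xi|^2+m^2=0$. For (i) I would multiply \refeq{app2} by $\epsilon^{r-k+1}$ and use the pointwise bound to obtain $\epsilon^{r-k+1}|\widehat{v\sharp_tG^{(k)}}(x^0,\cdot)|\le\epsilon^r|\widehat v|$, which lies in $L^2(\R^n)$ since $v\in H^r(\R^n)$; taking $L^2$ norms then gives $v\sharp_tG^{(k)}(x^0,\cdot)\in H^{r-k+1}(\R^n)$ with $||v\sharp_tG^{(k)}(x^0,\cdot)||_{H^{r-k+1}}\le ||v||_{H^r}$.

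For (ii), differentiating \refeq{app2} in $x^0$ — using $\partial_{x^0}e^{i\epsilon(t-x^0)}=-i\epsilon\,e^{i\epsilon(t-x^0)}$ — suggests $\partial_{x^0}\widehat{v\sharp_tG^{(k)}}=-\widehat{v\sharp_tG^{(k+1)}}$, hence $\partial_{x^0}^j(v\sharp_tG^{(k)})=(-1)^j v\sharp_tG^{(k+j)}$. I would justify this one derivative at a time: the difference quotient $h^{-1}(\widehat{v\sharp_tG^{(k)}}(x^0+h,\xi)-\widehat{v\sharp_tG^{(k)}}(x^0,\xi))$ converges pointwise in $\xi$ to $-\widehat{v\sharp_tG^{(k+1)}}(x^0,\xi)$, and by the mean value theorem applied to $h\mapsto\hbox{Re}(i^{k-1}\epsilon^{k-1}e^{i\epsilon(t-x^0-h)})$ it is bounded in modulus by $\epsilon^k|\widehat v|$; after multiplication by $\epsilon^{r-k}$ this produces the fixed $L^2(\R^n)$ majorant $\epsilon^r|\widehat v|$, so dominated convergence gives convergence of the difference quotient in $H^{r-k}(\R^n)$. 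Continuity of $x^0\mapsto v\sharp_tG^{(k+j)}(x^0,\cdot)$ in the relevant Sobolev space follows from the same majorant (the integrand is continuous in $x^0$, dominated by $\epsilon^r|\widehat v|$). Iterating, each differentiation costs one Sobolev degree, so all derivatives of order $\le\ell$ lie in, and are continuous into, $H^{r-\ell-k+1}(\R^n)$, which is exactly $v\sharp_tG^{(k)}\in\mathcal{C}^\ell(\R,H^{r-\ell-k+1}(\R^n))$.

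For (iii), applying $\partial_{x^0}^2$ to \refeq{app2} brings down the factor $(-i\epsilon)^2=-\epsilon^2$, so that $(\partial_{x^0}^2+|\xi|^2+m^2)\widehat{v\sharp_tG^{(k)}}=0$ by the dispersion identity, i.e. $\square(v\sharp_tG^{(k)})+m^2 v\sharp_tG^{(k)}=0$; with (i) and (ii) this shows $v\sharp_tG^{(k)}\in\mathcal{E}_0^{r-k+1}$. I would then compute its $\mathcal{E}^{r-k+1}$ norm from the Cauchy data at $x^0=t$, where $e^{i\epsilon(t-x^0)}=1$: there $\widehat{v\sharp_tG^{(k)}}(t,\xi)=\hbox{Re}(i^{k-1})\epsilon^{k-1}\widehat v(\xi)$ and, by (ii), $\widehat{\partial_{x^0}(v\sharp_tG^{(k)})}(t,\xi)=-\hbox{Re}(i^k)\epsilon^k\widehat v(\xi)$. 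Since one of the reals $\hbox{Re}(i^{k-1})$, $\hbox{Re}(i^k)$ equals $\pm1$ and the other $0$, we have $|\hbox{Re}(i^{k-1})|^2+|\hbox{Re}(i^k)|^2=1$, and the two terms in $||v\sharp_tG^{(k)}||^2_{\mathcal{E}^{r-k+1}}$ add up to $\int\epsilon^{2r}|\widehat v|^2=||v||^2_{H^r}$. Finally (iv) is proved exactly like (ii) but differentiating \refeq{app2} in $t$: since $\partial_t e^{i\epsilon(t-x^0)}=+i\epsilon\,e^{i\epsilon(t-x^0)}$, the same difference-quotient and dominated-convergence scheme gives $\partial_t^j(v\sharp_tG^{(k)})=v\sharp_tG^{(k+j)}$ (now with no sign), each lying in $\mathcal{E}_0^{r-k-j+1}\subset\mathcal{E}_0^{r-\ell-k+1}$ by (iii); for continuity of $t\mapsto v\sharp_tG^{(k)}$ into $\mathcal{E}_0^{r-k+1}$ I would apply (iii) to the difference $v\sharp_{t_1}G^{(k)}-v\sharp_{t_2}G^{(k)}$ (again a weak solution of \refeq{kg}), whose $\mathcal{E}^{r-k+1}$ norm, read off from its Cauchy data at $x^0=0$, is at most $(2\int\epsilon^{2r}|e^{i\epsilon t_1}-e^{i\epsilon t_2}|^2|\widehat v|^2)^{1/2}$, which tends to $0$ as $t_1\to t_2$ by dominated convergence (majorant $8\epsilon^{2r}|\widehat v|^2$).

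The step needing the most care — though none is genuinely hard — is the bookkeeping in (ii) and (iv): exhibiting for each difference quotient a fixed $L^2$ (respectively $L^1$) majorant independent of the increment, tracking the loss of exactly one Sobolev (respectively $\mathcal{E}$) degree at each differentiation, and keeping the sign conventions straight, since each $\partial_{x^0}$ derivative introduces a factor $-1$ whereas each $\partial_t$ derivative introduces $+1$.
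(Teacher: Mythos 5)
Your proof is correct and follows essentially the same route as the paper's: everything is read off from the Fourier-side formula \refeq{app2}, the pointwise bound $|H_k(t-x^0,\xi)|\le\epsilon(\xi)^{k-1}$, dominated convergence for the difference quotients in (ii) and (iv), and the identity $\partial_{x^0}^2\widehat{v\sharp_tG^{(k)}}=-\epsilon^2\,\widehat{v\sharp_tG^{(k)}}$ for (iii). You have simply written out in full the details that the paper leaves as ``obvious observations'' plus ``Lebesgue's theorem,'' including the clean evaluation of the $\mathcal{E}^{r-k+1}$ norm at $x^0=t$ using $|\mathrm{Re}(i^{k-1})|^2+|\mathrm{Re}(i^k)|^2=1$.
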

\emph{Proof} --- All these properties are consequences of (\ref{app2}) and of the obvious observations that, if we denote $H_k(t-x^0,\xi):=  \hbox{Re}\left(i^{k-1}\epsilon(\xi)^{k-1} e^{i\epsilon(\xi)(t-x^0)}\right)$ then $H_k$ is smooth and $|H_k(t-x^0,\xi)| \leq \epsilon(\xi)^{k-1}$. The proof of (ii) and (iv) requires furthermore the use of Lebesgue's theorem. The proof of (iii) follows from ${\partial^2\over \partial t^2}\left(\widehat{v\sharp_tG^{(k)}}\right) = - \epsilon^2\widehat{v\sharp_tG^{(k)}}$. \bbox

\end{document}